\newcommand{\non}{\nonumber}
\newcommand{\la}{\lambda}
\newcommand{\Ga}{\Gamma}
\newcommand{\ts}{\,}
\newcommand{\tss}{\hspace{1pt}}
\newcommand{\Y}{ {\rm Y}}
\newcommand{\Z}{\mathbb{Z}\tss}
\newcommand{\gl}{\mathfrak{gl}}
\newcommand{\bal}{\begin{aligned}}
\newcommand{\eal}{\end{aligned}}
\newcommand{\beq}{\begin{equation}}
\newcommand{\ben}{\begin{equation*}}
\def\beql#1{\begin{equation}\label{#1}}
\newcommand\dcl\DeclareMathOperator
\dcl\Sp{Specm} \dcl\St{St} \dcl\cfs{cfs} \dcl\supp{supp}
\dcl\Ker{Ker} \dcl\Hom{Hom} \dcl\Ext{Ext} \dcl\Ann{Ann} \dcl\Ob{Ob}
\dcl\im{Im} \dcl\mo{mod} \dcl\rank{rank} \dcl\M{M} \dcl\Specm{Specm}
\dcl\Aut{Aut} \dcl\lm{lm} \dcl\lc{lc} \dcl\lt{lt} \dcl\Gal{Gal}
\newcommand{\bm}{\mathbf m}
\newcounter{numberofremark}
\dcl\gkdim{GKdim}
\newcommand\de{{\delta}}
\newtheorem{theorem}{Theorem}[section]
\newtheorem{definition}[theorem]{Definition}
\newtheorem{corollary}[theorem]{Corollary}
\newtheorem{lemma}[theorem]{Lemma}
\newtheorem{proposition}[theorem]{Proposition}
\newtheorem{example}[theorem]{Example}
\newtheorem{remark}[theorem]{Remark}
\newcommand{\cond}[2]{
\ifthenelse {\equal{\commenttag} {detailed}} {#2$\maltese$}
 {#1}
 }
\dcl\Der{Der} \dcl\Inn{Inn} \dcl\mcd{mcd} \dcl\GK{GK} \dcl\Ass{Ass}
\dcl\Spec{Spec}
\newcommand{\eeq}{\end{equation}}
\newcommand{\een}{\end{equation*}}
\def\bm{\mathbf m}
\begin{document}

\title[RELATION MODULES    FOR FINITE $W$-ALGEBRAS]{RELATION MODULES FOR FINITE $W$-ALGEBRAS AND TENSOR PRODUCTS OF HIGHEST WEIGHT EVALUATION MODULES FOR YANGIANS OF TYPE A}
\author{Vyacheslav Futorny}
\address{Institute of Mathematics and Statistics\\
University of S\~ao Paulo\\
Caixa Postal 66281- CEP 05315-970\\
S\~ao Paulo, Brazil} \email{futorny@ime.usp.br}

\author{Luis Enrique Ramirez}
\address{Universidade Federal do ABC, Santo Andr\'e SP, Brasil} \email{luis.enrique@ufabc.edu.br}

\author{Jian Zhang}
\address{Institute of Mathematics and Statistics\\
University of S\~ao Paulo\\
Caixa Postal 66281- CEP 05315-970\\
S\~ao Paulo, Brazil} \email{zhang@ime.usp.br}

\begin{abstract}
We construct explicitly a large family of Gelfand-Tsetlin modules for an arbitrary finite $W$-algebra of type $A$ and establish their irreducibility. A basis of these modules is formed by the Gelfand-Tsetlin tableaux whose entries satisfy certain admissible sets of relations. Characterization and an effective method of constructing such admissible relations are given. In the case of the Yangian of $\mathfrak{gl}_n$ we prove the sufficient condition for the irreducibility of the tensor product of two highest weight relation modules and establish irreducibility of any number of highest weight relation modules with generic highest weights. This extends
the results of Molev to infinite dimensional highest modules.

\end{abstract}

\vspace{0,2cm}

\maketitle

\section {Introduction}
$W$-algebras were first introduced in the work of Zamolodchikov in the 80's  in the study  of two-dimensional  conformal field theories.
General definition of $W$-algebras was given
in  the work of Feigin and
Frenkel \cite{FF}  via quantized Drinfeld-Sokolov
reduction. This was later generalized by
Kac, Roan and Wakimoto \cite{KRW}, Kac and
Wakimoto \cite{KW} and  De Sole and Kac \cite{SK}. For  basic representation theory of $W$-algebras we refer to \cite{A1} and \cite{A2}.

$W$-algebras can be viewed as affinizations of finite $W$-algebras.  A finite $W$-algebra
is associated to a simple complex finite-dimensional Lie algebra and
to its nilpotent elements. Their concept goes back to the  papers
of Kostant \cite{Ko}, Lynch \cite{L}, Elashvili and Kac \cite{EK}.
Finite $W$-algebras are related to quantizations of the Slodowy slices \cite{P}, \cite{GG} and to the Yangian theory \cite{RS}, \cite{BK1}. In type $A$, that is for $\gl_n$,
Brundan and Kleshchev \cite{BK1}, \cite{BK2} showed  that finite $W$-algebras are isomorphic to
certain quotients of the shifted Yangians.

If $\pi=\pi(p_1,\dots, p_n)$ is a pyramid with $N=p_1+\cdots+p_{n}$  boxes distributed in $n$ rows with  $p_1,\dots, p_n$ boxes in each row respectively (counting from the bottom),
then the
finite $W$-algebra $W(\pi)$ is associated with $\gl_N$ and the
nilpotent matrix in $\gl_N$ of Jordan type $(p_1,\dots, p_n)$. In particular, $W(\pi)$ is the universal enveloping algebra of  $\gl_n$ if the pyramid $\pi$ has one column with $n$ boxes.

Theory of Gelfand-Tsetlin representations for finite $W$-algebras of type $A$ was developed in \cite{FMO2}. In such representations the Gelfand-Tsetlin subalgebra of $W(\pi)$ has a common generalized eigenspace decomposition. For an irreducible representation this is equivalent to require the existence of a common eigenvector for the Gelfand-Tsetlin subalgebra $\Gamma$. Such an eigenvector is annihilated by some maximal ideal of $\Gamma$.
The main problem is to construct explicitly (with a basis and the action of algebra generators) irreducible  Gelfand-Tsetlin modules for $W(\pi)$ generated by a vector  annihilated by a fixed maximal ideal of $\Gamma$.
Recent results of \cite{FGRZ} allow to construct a ``universal" cyclic Gelfand-Tsetlin module for $W(\pi)$ for a fixed maximal ideal of $\Gamma$. When this module is irreducible (sufficient condition is given in \cite{FGRZ}) the problem of explicit construction is solved. On the other hand,  even for $\gl_n$ not all irreducible subquotients of the universal module have a tableaux basis.   Hence, this difficult problem of explicit construction  of irreducible  Gelfand-Tsetlin modules remains open.

A new  technique of constructing certain irreducible  Gelfand-Tsetlin modules  was developed in  \cite{FRZ} in the case of the universal enveloping algebra of  $\gl_n$ generalizing the work of Gelfand and Graev \cite{GeG} and the work of Lemire and Patera \cite{LP}.   The main objective of this paper is to adapt and apply the  technique of \cite{FRZ}  in
the case of finite $W$-algebras of type $A$.  We obtain:\\

\noindent - Effective  removal of relations  method (the RR-method) for constructing  admissible sets of relations (Theorem \ref{rr}); \\
- Characterization of  admissible sets of relations (Theorem \ref{sufficiency of admissible});\\
- Explicit construction of  Gelfand-Tsetlin $W(\pi)$-modules  for a given admissible set of relations (Definition \ref{def-main}).  \\

Our main result is the following:

\begin{theorem}\label{thm-B}
For a given admissible set of relations $\mathcal{C}$ and any tableau $[l]$ satisfying $\mathcal{C}$, the space $V_{\mathcal{C}}([l])$ (see Definition \ref{def-sets}) is a Gelfand-Tsetlin $W(\pi)$-module with diagonal action of the Gelfand-Tsetlin subalgebra.
\end{theorem}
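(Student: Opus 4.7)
The plan is to follow the strategy developed for $\gl_n$ in \cite{FRZ}, now adapted to the $W$-algebra setting. By the Gelfand--Tsetlin theory of \cite{FMO2}, a distinguished set of generators of $W(\pi)$ acts on an arbitrary Gelfand--Tsetlin tableau $[l]$ by an explicit formula of the form
\[
g\cdot[l] \;=\; \sum_k c_k(l)\,[l^{\pm(k)}],
\]
where $[l^{\pm(k)}]$ differs from $[l]$ by shifting a single entry by $\pm 1$, and $c_k(l)$ is a rational function in the entries of $[l]$ whose numerator is a product of linear factors of differences of entries. Since the Gelfand--Tsetlin subalgebra $\Gamma$ already acts diagonally on every tableau by construction, and since together with these distinguished generators it generates $W(\pi)$, the theorem reduces to the stability claim that these generators preserve the linear span $V_{\mathcal{C}}([l])$.

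The heart of the argument is then two verifications. First, if $[l']\in V_{\mathcal{C}}([l])$ and $[(l')^{\pm(k)}]$ fails to satisfy $\mathcal{C}$, then one must show that $c_k(l')=0$. This is precisely where admissibility enters: Theorem \ref{sufficiency of admissible} is tailored so that any $\pm 1$ shift which would violate a relation in $\mathcal{C}$ forces one of the linear factors in the numerator of $c_k(l')$ to vanish, causing the offending summand to drop out. Second, whenever $[(l')^{\pm(k)}]$ does satisfy $\mathcal{C}$, it must lie in the same $\mathcal{C}$-class as $[l]$, hence in $V_{\mathcal{C}}([l])$ by Definition \ref{def-sets}; this is immediate from the inductive definition of the class via iterated admissible shifts, combined with the closure property of admissible sets furnished by the RR-method of Theorem \ref{rr}.

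The main obstacle I expect is the first verification. Because $\mathcal{C}$ can contain many interlocking relations across adjacent rows, a single $\pm 1$ shift may simultaneously trigger several potential violations, and one must show exhaustively that every such violation is matched by a corresponding vanishing linear factor in $c_k(l')$. This case analysis, though combinatorial in nature, is delicate, and is the step where the precise formulation of admissibility pays off; it amounts to checking that the set of modifications ruled out by $\mathcal{C}$ coincides exactly with the zero locus in the entries of $[l']$ of the relevant factors of the action formula. Once this coefficient-vanishing lemma is established, closedness of $V_{\mathcal{C}}([l])$ under the distinguished generators, diagonality of $\Gamma$ on the tableau basis, and hence the Gelfand--Tsetlin module structure, follow immediately.
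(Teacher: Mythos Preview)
Your proposal conflates two distinct statements and rests on a claim that is false.

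First, note that by Definition~\ref{def-main}, a set $\mathcal{C}$ is \emph{defined} to be admissible precisely when $V_{\mathcal{C}}([l])$ is a $W(\pi)$-module under the formulas~\eqref{action abc}; together with the evident diagonal action of $\Gamma$ via~\eqref{aractba}, Theorem~\ref{thm-B} is then essentially a restatement of that definition. What you are really attempting is the substantive part, namely the sufficiency direction of Theorem~\ref{sufficiency of admissible}, which is proved in Section~6.

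Second, and more seriously, your reduction to a ``stability'' claim is flawed. You argue as though there were an ambient $W(\pi)$-module on the space of all noncritical tableaux, so that it suffices to check the generators preserve the subspace $V_{\mathcal{C}}([l])$. There is no such ambient module: the formulas~\eqref{action abc} on arbitrary tableaux do \emph{not} satisfy the defining relations of $W(\pi)$. Consequently, showing that the action stays inside $V_{\mathcal{C}}([l])$ is neither the right question nor sufficient.

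Moreover, the mechanism you propose for stability---that the coefficient $c_k(l')$ vanishes whenever $[(l')^{\pm(k)}]$ violates $\mathcal{C}$---is simply false. For instance, in $e_r^{(t)}[l']$ the coefficient of $[l'+\delta^{(k)}_{ri}]$ is built from entries of rows $r$ and $r{+}1$ only; if the shift violates a relation of the form $(k',r{-}1,i')>(k,r,i)$ with row $r{-}1$, nothing in the numerator is forced to vanish. This already occurs for the standard set $\mathcal{S}$. The paper's convention, inherited from Theorem~\ref{thm:dgactylp}, is instead to \emph{declare} such out-of-range tableaux to be zero (encoded in Section~6 by the function $\Phi$), so closure of the span is automatic by fiat. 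The genuine work is then to verify, relation by relation, that the operators so defined satisfy every defining relation of $W(\pi)$.

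That verification is exactly what Section~6 carries out: for each relation one compares the coefficients on both sides at a given $[l]$ with those at an auxiliary tableau $[v]$ (either with $\mathbb{Z}$-independent entries, or satisfying a small admissible set known from Example~\ref{example admissible}), using that the coefficients are rational functions and matching the $\Phi$-patterns. The condition~\eqref{condition for admissible} defining $\mathfrak{F}$ is used not to kill coefficients, but to guarantee that the $\Phi$-values on both sides agree in the delicate Serre-type relations~\eqref{relation eee}. Your proposal misses this entire layer.
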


As a consequence we construct a large new family of   Gelfand-Tsetlin $W(\pi)$-modules with explicit basis and  the action the generators of algebra.
If $\mathcal{C}$ is an admissible set of relations  and  $[l]$  is any tableau  satisfying $\mathcal{C}$, then we have a Gelfand-Tsetlin
 $W(\pi)$-module $V_{\mathcal{C}}([l])$ which we call the \emph{relation module} associated with $\mathcal{C}$ and $[l]$.
 We have the following criterion of irreducibility for the relation modules (Theorem \ref{thm-irr}):

\begin{theorem}\label{thm-A}
The Gelfand-Tsetlin module $V_{\mathcal{C}}([l])$ is irreducible if and only if $\mathcal{C}$ is the maximal admissible set of relations satisfied by $[l]$.
\end{theorem}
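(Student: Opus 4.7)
The plan is to handle the two implications separately, using Theorem \ref{rr} (the RR-method), Theorem \ref{sufficiency of admissible} (characterization of admissible sets), and Theorem \ref{thm-B} (the $W(\pi)$-module structure of $V_{\mathcal{C}}([l])$).

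For the forward direction, that irreducibility forces maximality, I would argue by contrapositive. Assume $\mathcal{C}$ is not maximal among admissible sets of relations satisfied by $[l]$, and fix an admissible $\mathcal{C}'$ with $\mathcal{C} \subsetneq \mathcal{C}'$ also satisfied by $[l]$. Every tableau satisfying $\mathcal{C}'$ satisfies $\mathcal{C}$, so by Definition \ref{def-sets} the basis of $V_{\mathcal{C}'}([l])$ is a proper subset of that of $V_{\mathcal{C}}([l])$. Both modules are built from the same Gelfand-Tsetlin action formulas (Definition \ref{def-main}), and the very notion of admissibility, applied now to $\mathcal{C}'$, guarantees that any matrix coefficient that would carry a $\mathcal{C}'$-tableau outside the $\mathcal{C}'$-basis vanishes. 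Consequently $V_{\mathcal{C}'}([l])$ embeds $W(\pi)$-equivariantly into $V_{\mathcal{C}}([l])$ as a proper nonzero submodule, obstructing irreducibility.

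For the converse direction, assume $\mathcal{C}$ is maximal and let $U$ be a nonzero submodule of $V_{\mathcal{C}}([l])$. The Gelfand-Tsetlin subalgebra $\Gamma$ acts diagonally on the tableau basis with pairwise distinct characters, a standard feature of the construction inherited from the genericity built into admissible data, so $U$ is spanned by the basis tableaux it contains. It then suffices to verify the following connectedness statement: for any $[t]$ in the basis of $V_{\mathcal{C}}([l])$, there is a sequence $[t] = [t_0] \to [t_1] \to \cdots \to [t_k] = [l]$ of single-step transitions, each implemented by a standard generator of $W(\pi)$ with nonzero coefficient and each intermediate $[t_i]$ in the basis. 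Theorem \ref{rr} supplies an explicit procedure for producing such chains of admissible moves, while Theorem \ref{sufficiency of admissible} guarantees that the only coefficient vanishings encountered along the way are those already imposed by $\mathcal{C}$ itself; any further systematic vanishing would give rise to a strictly larger admissible set of relations satisfied by $[l]$, contradicting maximality. Once $[l] \in U$, reversing the same argument starting from $[l]$ shows that every other basis tableau is accessible from $[l]$, and hence lies in $U$, so $U = V_{\mathcal{C}}([l])$.

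The main obstacle is the connectedness claim. I would prove it by induction using the RR-method, peeling off, one coordinate at a time, the shifts relating an arbitrary $[t]$ to the reference $[l]$, and at each stage verifying that the newly used generator of $W(\pi)$ has nonzero matrix coefficient. The delicate point is the interplay between the pyramid combinatorics of $\pi$ and the vanishing pattern of coefficients in the shifted-Yangian realization used in \cite{FMO2}: one must show that no accidental vanishing occurs, so that any vanishing of a transition along a candidate path genuinely reflects a relation in $\mathcal{C}$. Once the $\gl_n$-connectedness from \cite{FRZ} is transported to the $W(\pi)$-setting via the shifted Yangian presentation, the full connectedness statement follows and irreducibility is established.
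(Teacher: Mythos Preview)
Your converse direction is essentially the paper's argument in outline: the paper uses Lemma~\ref{lemma separation} (distinct $\Gamma$-characters) and Lemma~\ref{lemma sequences} (existence of a step-by-step path between any two tableaux in $\mathcal{B}_{\mathcal{C}}([l])$), then checks directly that the coefficient of $[l\pm\delta^{(k)}_{ij}]$ in $e_i^{(p_{i+1}-p_i+1)}[l]$ or $f_i^{(1)}[l]$ is nonzero because maximality of $\mathcal{C}$ rules out the equalities $l^{(k)}_{i,j}=l^{(t)}_{i\pm1,j'}$ that would kill the numerator. You cite Theorem~\ref{rr} and Theorem~\ref{sufficiency of admissible} for this, but neither is what is actually needed; the path construction is Lemma~\ref{lemma sequences}, and the nonvanishing is a one-line check from the explicit formulas~\eqref{action of generators}, not a consequence of the structure theory of admissible sets.

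Your forward direction has a genuine gap. Admissibility of $\mathcal{C}'$ says only that $V_{\mathcal{C}'}([l])$ is a $W(\pi)$-module \emph{with the convention that tableaux outside $\mathcal{B}_{\mathcal{C}'}$ are declared zero}; it does \emph{not} say that the corresponding Gelfand--Tsetlin coefficients actually vanish. Thus there is no reason for the linear inclusion $V_{\mathcal{C}'}([l])\hookrightarrow V_{\mathcal{C}}([l])$ to be $W(\pi)$-equivariant. Concretely, if $\mathcal{C}'\setminus\mathcal{C}$ contains a relation of the form $(k,r-1,i)>(s,r,j)$ and $[l']\in\mathcal{B}_{\mathcal{C}'}$ sits at the boundary $l'^{(k)}_{r-1,i}=l'^{(s)}_{r,j}+1$, then $[l'-\delta^{(k)}_{r-1,i}]\in\mathcal{B}_{\mathcal{C}}\setminus\mathcal{B}_{\mathcal{C}'}$, yet the coefficient of this tableau in $f_{r-1}^{(t)}[l']$ involves only row $r-2$ and is not forced to vanish. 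The paper avoids this by arguing differently: it observes that non-maximality produces a pair of adjacent-row entries with $l^{(s)}_{r+1,i}-l^{(t)}_{r,j}\in\mathbb{Z}$ unconstrained by $\mathcal{C}$, picks a tableau $[\gamma]$ with $\gamma^{(s)}_{r+1,i}\geq\gamma^{(t)}_{r,j}$, and shows that the cyclic submodule $W(\pi)\cdot[\gamma]$ cannot reach any $[\xi]$ with $\xi^{(t)}_{r,j}>\xi^{(s)}_{r+1,i}$, because at the crossing both the $e_r$-coefficient (factor $l^{(s)}_{r+1,i}-l^{(t)}_{r,j}$) and the $f_{r+1}$-coefficient (factor $l^{(t)}_{r,j}-l^{(s)}_{r+1,i}$) vanish. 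The point is that the $\geq$-boundary is two-sided impermeable, which is exactly what your submodule argument would need but does not establish for a general $\mathcal{C}'$.
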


Finally, we consider a tensor product of relation modules.
If $V_1, \ldots, V_l$ are $\gl_n$-modules then $V_1\otimes \ldots \otimes V_l$ is a module for the Yangian $\Y(\gl_n)$.  For finite dimensional
$\gl_n$-modules the criterion of irreducibility of such tensor product was established in \cite{M1}. We consider  tensor product of infinite dimensional  highest weight relation modules.
 We establish irreducibility of any number of highest weight relation modules with \emph{generic} highest weights (Theorem \ref{thm-generic}) and give sufficient conditions of the irreducibility of two highest weight relation modules  (Theorem \ref{thm-integral}).
These results
 extend the results of Molev \cite{M1} and Brundan and Kleshchev \cite{BK2} to infinite dimensional highest weight modules for the Yangians.
 We observe that we do not fully cover the above mentioned results since not all finite dimensional $\Y(\gl_n)$-modules
 are relation modules.

\section{Finite $W$-algebras
}\label{sec:sy}\setcounter{equation}{0}
The ground field will be the field of complex numbers $\mathbb{C}$.

Fix a tuple $(p_1,\dots,p_n)$ such that $1\leqslant p_1\leqslant \dots\leqslant p_n$.
Associate with this tuple
the pyramid $\pi=\pi(p_1,\dots,p_n)$, where $p_i$ is the number of unit squares in the $i$th row of the
pyramid counting from the bottom.   We will assume that the rows of $\pi$ are left-justified.
From now on we set  $N=p_1+\cdots+p_{n}$.

 Given such pyramid  $\pi$,
the corresponding {\it shifted Yangian\/} $\Y_{\pi}(\gl_n)$ \cite{BK1}
is the associative algebra over $\mathbb{C}$ defined by generators
\begin{align}\label{gener}
d^{\ts(r)}_i,&\quad i=1,\dots,n,&&\quad r\geqslant 1,\\
f^{(r)}_i,&\quad i=1,\dots,n-1,&&\quad r\geqslant 1,
\non\\
e^{(r)}_i,&\quad i=1,\dots,n-1,&&\quad r\geqslant p_{i+1}-p_i+1,
\non
\end{align}
subject to the following relations:
\begin{align}
[d_{i}^{\ts(r)},d_{j}^{\ts(s)}]&=0,
\non\\
[e_{i}^{(r)},f_{j}^{(s)}]&=-\ts \de_{ij}\ts\sum_{t=0}^{r+s-1}
d_{i}^{\tss\prime\ts(t)}\ts d_{i+1}^{\ts(r+s-t-1)},\non\\
[d_{i}^{\ts(r)},e_{j}^{(s)}]&=(\de_{ij}-\de_{i,j+1})\ts\sum_{t=0}^{r-1}
d_{i}^{\ts(t)}\ts e_{j}^{(r+s-t-1)},\non\\
[d_{i}^{\ts(r)},f_{j}^{(s)}]&=
(\de_{i,j+1}-\de_{ij})\ts\sum_{t=0}^{r-1}
f_{j}^{(r+s-t-1)}\ts d_{i}^{\ts(t)},
\non
\end{align}
\begin{align*}
[e_{i}^{(r)},e_{i}^{(s+1)}]-[e_{i}^{(r+1)},e_{i}^{(s)}]&=
e_{i}^{(r)}e_{i}^{(s)}+e_{i}^{(s)}e_{i}^{(r)},\non\\
[f_{i}^{(r+1)},f_{i}^{(s)}]-[f_{i}^{(r)},f_{i}^{(s+1)}]&=
f_{i}^{(r)} f_{i}^{(s)}+f_{i}^{(s)} f_{i}^{(r)},\non\\
[e_{i}^{(r)},e_{i+1}^{(s+1)}]-[e_{i}^{(r+1)},e_{i+1}^{(s)}]&=
-e_{i}^{(r)}e_{i+1}^{(s)},\non\\
[f_{i}^{(r+1)},f_{i+1}^{(s)}]-[f_{i}^{(r)},f_{i+1}^{(s+1)}]&=
-f_{i+1}^{(s)}f_{i}^{(r)},
\non
\end{align*}
\begin{alignat}{2}
[e_{i}^{(r)},e_{j}^{(s)}]&=0,\qquad&&\text{if}\quad |i-j|>1,\non\\
[f_{i}^{(r)},f_{j}^{(s)}]&=0,\qquad&&\text{if}\quad |i-j|>1,\non\\
[e_{i}^{(r)},[e_{i}^{(s)},e_{j}^{(t)}]]
&+[e_{i}^{(s)},[e_{i}^{(r)},e_{j}^{(t)}]]=0,
\qquad&&\text{if}\quad |i-j|=1,\non\\
[f_{i}^{(r)},[f_{i}^{(s)},f_{j}^{(t)}]]
&+[f_{i}^{(s)},[f_{i}^{(r)},f_{j}^{(t)}]]=0,
\qquad&&\text{if}\quad |i-j|=1,
\non
\end{alignat}
for all possible $i,j,r,s,t$, where $d_{i}^{(0)}=1$ and
the elements
$d_{i}^{\tss\prime\ts(r)}$ are obtained from the relations

\begin{equation*}
\sum_{t=0}^r d_{i}^{\tss(t)}\ts d_{i}^{\tss\prime\ts(r-t)}=\de_{r0},
\qquad r=0,1,\dots.
\end{equation*}

Note that the algebra $\Y_{\pi}(\gl_n)$ depends only on
the differences $p_{i+1}-p_i$ (see (\ref{gener})), and our definition
corresponds to the left-justified
pyramid $\pi$, as compared to \cite{BK1}.
In the case
of a rectangular pyramid $\pi$ with $p_1=\dots=p_n$, the
algebra $\Y_{\pi}(\gl_n)$ is isomorphic to the {\it Yangian\/}
$\Y(\gl_n)$; cf. \cite{m:yc}.
Moreover, for an arbitrary pyramid $\pi$,
the shifted Yangian $\Y_{\pi}(\gl_n)$
can be regarded as a natural subalgebra of $\Y(\gl_n)$.

Following \cite{BK1},
the {\it finite $W$-algebra\/} $W(\pi)$,
associated with $\gl_N$ and the pyramid $\pi$, can be defined
as the quotient of $\Y_{\pi}(\gl_n)$ by the two-sided ideal
generated by all elements $d_{1}^{\tss(r)}$ with $r\geqslant p_1+1$.  In the case of the
one-column pyramids $\pi$ we obtain
 the universal enveloping algebra of $\gl_n$.
We refer the reader to \cite{BK1, BK2} for a description
 and the structure of the algebra $W(\pi)$,
 including an analog of the Poincar\'e--Birkhoff--Witt theorem
as well as a construction of algebraically independent
generators of the center of $W(\pi)$.

\subsection{Gelfand-Tsetlin modules}
Recall that the pyramid $\pi$ has left-justified
rows $(p_1,\dots, p_n)$. Denote $\pi_k$   the pyramid associated with the tuple $(p_1,\dots, p_k)$,
 and let $W(\pi_k)$ be the corresponding finite $W$-algebra, $k=1,\dots, n$. Then
 we have the following
chain of subalgebras
\beql{chainw}
W(\pi_1)\subset
W(\pi_2) \subset\dots\subset W(\pi_n)=W(\pi).
\eeq
Denote by
$\Gamma$ the commutative subalgebra of $W(\pi)$ generated by the
centers of the subalgebras $W(\pi_k)$ for $k=1,\dots, n$, which is the \textit{Gelfand--Tsetlin subalgebra}
of $W(\pi)$ \cite{BK2}.

A finitely generated module $M$ over $W(\pi)$ is called a {\em
Gelfand-Tsetlin module\/} (with respect to $\Gamma$) if \begin{equation*}
M=\underset{{\bm} \in \Specm {\Gamma}}{\bigoplus}M({\bm}) \end{equation*} as
a $\Ga$-module, where \begin{equation*} M({\bm}) \ = \ \{ x\in M\ | \ {\bm}^k x
=0\quad \text{for some}\quad k\geqslant 0\} \end{equation*}
 and $\Specm \Ga$
denotes the set of maximal ideals of $\Ga$.

Theory of Gelfand-Tsetlin modules for $W(\pi)$ was developed in \cite{FMO1}, \cite{FMO2}, \cite{FMO3}.
In particular, it was shown

\begin{theorem}\label{thm-finite}[\cite{FMO3}, Theorem II]
Given any $\bm\in
\Specm \Gamma$ the number $F(n)$ of non-isomorphic irreducible Gelfand-Tsetlin modules $M$ over
$W(\pi)$ with $M(\bm)\neq 0$ is non-empty and finite.
\end{theorem}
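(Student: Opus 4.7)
The plan is to exploit the Galois order structure of $W(\pi)$ over the Gelfand--Tsetlin subalgebra $\Gamma$, together with the general classification machinery for Galois orders due to Futorny and Ovsienko. Once this structure is in hand, both non-emptiness and finiteness follow from a uniform argument.

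For non-emptiness, fix $\bm \in \Specm \Gamma$ and form the cyclic left module $M_0 = W(\pi)/W(\pi)\,\bm$. A one-sided PBW-type decomposition, expressing $W(\pi)$ as a free right $\Gamma$-module on ordered monomials in the generators $e_i^{(r)}, f_i^{(r)}$ (which follows from the Brundan--Kleshchev PBW theorem for the shifted Yangian together with the explicit description of $\Gamma$ via the centers of the subalgebras $W(\pi_k)$), guarantees that $M_0 \neq 0$ and that the image $v_0$ of $1$ is annihilated by $\bm$. Any irreducible quotient $M$ of $M_0$ receives a nonzero image of $v_0$ (since $M_0$ is cyclically generated by $v_0$), and this image still lies in $M(\bm)$, so $M(\bm) \neq 0$.

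For finiteness, invoke the embedding $\Gamma \hookrightarrow \Lambda$ into the polynomial algebra $\Lambda$ in the Gelfand--Tsetlin variables $\lambda_{k,i}$, with $\Gamma = \Lambda^G$ for the finite group $G = S_{p_1} \times \cdots \times S_{p_n}$ permuting the variables in each row. The Galois order framework sets up a surjection from isomorphism classes of irreducible Gelfand--Tsetlin $W(\pi)$-modules $M$ with $M(\bm) \neq 0$ onto a finite set built from the $G$-orbits on the fiber of $\Specm \Lambda \to \Specm \Gamma$ over $\bm$, refined by connected components of a combinatorial graph encoding the possible shifts effected by the generators $e_i^{(r)}, f_i^{(r)}$. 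Because $\Lambda$ has rank $|G|$ over $\Gamma$, this fiber is finite, so are the $G$-orbits, and so is the enumeration of irreducibles supported over each orbit; the conclusion follows.

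The main obstacle is the verification of the Galois order axioms for $W(\pi)$: concretely, that the Harish--Chandra projection of each generator lies in a suitable localization of the skew group algebra $\Lambda * G$, and that $W(\pi)$ embeds faithfully into $\Lambda * G$ via this projection with the correct integrality property on each $\Gamma$-bimodule $W(\pi_{k})$-layer. For $\gl_n$ (the one-column pyramid) this is the classical Drozd--Futorny--Ovsienko result, while for general $W(\pi)$ it requires explicit computations with shifted Yangian generators relying on the Brundan--Kleshchev isomorphism; this is the technical heart of the argument, and granted it, non-emptiness and finiteness are essentially formal consequences of the general theory.
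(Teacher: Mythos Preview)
Your approach is essentially the one the paper indicates: the paper does not give its own proof but cites the result from \cite{FMO3}, and the sentence immediately following the theorem explains that the proof rests on the fact that $W(\pi)$ is a Galois order (integral Galois algebra) over $\Gamma$, precisely the framework you invoke. Your outline of non-emptiness (cyclic module $W(\pi)/W(\pi)\bm$ and an irreducible quotient) and finiteness (finite fiber of $\Specm\Lambda\to\Specm\Gamma$ together with the Futorny--Ovsienko classification for Galois orders) is the standard route through that machinery, and your closing paragraph correctly identifies where the real work lies, namely in establishing the Galois order axioms for $W(\pi)$ via the Brundan--Kleshchev presentation.

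One correction worth making: the group $G$ you wrote down is not the right one. In row $r$ of a tableau the entries are $l_{r,i}^{(k)}$ with $1\leq i\leq r$ and $1\leq k\leq p_i$, so row $r$ has $R_r=p_1+\cdots+p_r$ entries, and the group for which $\Gamma=\Lambda^{G}$ is $G=S_{R_1}\times\cdots\times S_{R_n}$ (this is the group the paper uses explicitly in Section~3, and it is what produces the bound $p_1!(p_1+p_2)!\cdots(p_1+\cdots+p_{n-1})!$ mentioned just after the theorem). With your $S_{p_1}\times\cdots\times S_{p_n}$ the invariants would be strictly larger than $\Gamma$. This is a bookkeeping slip and does not affect the structure of the argument.
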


The proof of this result is based on the important fact that the finite $W$-algebra $W(\pi)$ is a Galois order \cite{FO} (or equivalently, integral Galois algebra) (\cite{FMO3}, Theorem 3.6). Moreover, in  particular cases of
one-column pyramids \cite{O}  and two-row  pyramids
\cite{FMO3},  the number $F(n)$ is bounded by  $p_1!(p_1+p_2)!\ldots (p_1+\ldots+
p_{n-1})!$. This remains a conjecture in general.

\
\

\subsection{Finite-dimensional
representations of $W(\pi)$}
Set
\ben
\bal
f_i(u)=\sum_{r=1}^{\infty} f_i^{(r)}\ts u^{-r}, \qquad
e_i(u)&=\sum_{r=p_{i+1}-p_i+1}^{\infty} e_i^{(r)}\ts u^{-r}
\eal
\een
and denote
\ben
A_i(u)=u^{p_1}\ts (u-1)^{p_2}\ts\dots (u-i+1)^{p_i}\ts
a_i(u)
\een
for $i=1,\dots,n$ with $a_i(u)=d_1(u)\ts d_2(u-1)\dots d_i(u-i+1)$,
and
\ben
\bal
B_i(u)&=u^{p_1}\ts (u-1)^{p_2}\ts
\dots (u-i+2)^{p_{i-1}}\ts (u-i+1)^{p_{i+1}}\ts a_i(u)
\ts e_i(u-i+1),\\
C_i(u)&=u^{p_1}\ts (u-1)^{p_2}\ts\dots (u-i+1)^{p_i}
\ts f_i(u-i+1)\ts a_i(u)
\eal
\een
for $i=1,\dots,n-1$. Then
$A_i(u)$, $B_i(u)$, and $C_i(u)$, $i=1,\ldots,n$
are polynomials in $u$, and their coefficients
are generators of $W(\pi)$ \cite{FMO2}. Define
the elements $a_{r}^{(k)}$ for $r=1,\dots,n$
and $k=1,\dots,p_1+\dots+p_r$ through the expansion
\ben
A_r(u)=u^{p_1+\dots+p_r}+\sum_{k=1}^{p_1+\dots+p_r}
a_{r}^{(k)}\ts u^{p_1+\dots+p_r-k}.
\een
Thus, the elements $a_{r}^{(k)}$ generate the Gelfand--Tsetlin
subalgebra $\Gamma$ of $W(\pi)$.

Fix an $n$-tuple
$\la(u)=\big(\la_1(u),\dots,\la_n(u)\big)$
of monic polynomials in $u$, where $\la_i(u)$
has degree $p_i$. Let $L(\la(u))$ denote
the irreducible highest weight representation
of $W(\pi)$ with highest weight $\la(u)$.
Then $L(\la(u))$ is a Gelfand-Tsetlin module generated by
a nonzero vector $\xi$
such that
\begin{alignat}{2}
B_{i}(u)\ts\xi&=0 \qquad &&\text{for} \quad
i=1,\dots,n-1, \qquad \text{and}
\non\\
u^{p_i}\ts d_{i}(u)\ts\xi&=\la_i(u)\ts\xi \qquad &&\text{for}
\quad i=1,\dots,n.
\non
\end{alignat}
Let \ben \la_i(u)=(u+\la^{(1)}_i)\ts(u+\la^{(2)}_i) \dots
(u+\la^{(p_i)}_i),\qquad i=1,\dots,n. \een We assume that
the parameters $\la^{(k)}_i$ satisfy the  conditions
\ben
\la^{(k)}_i-\la^{(k)}_{i+1}\in\Z_{\geq 0},\qquad i=1,\dots,n-1, \een
for any value
$k\in\{1,\dots,p_i\}$.
In this case, the
representation $L(\la(u))$ of $W(\pi)$ is finite-dimensional.

The explicit construction of a family of
finite-dimensional irreducible representations of
$W(\pi)$ was given in \cite{FMO2}. As it will play an important role in the arguments  of this paper. We recall below this
construction.

\subsection{Gelfand-Tsetlin basis for finite-dimensional
representations}
Consider a family of finite-dimensional representations of $W(\pi)$
by imposing the condition \ben
\la_i^{(k)}-\la_j^{(m)}\notin\Z,\qquad\text{for all}\ \ i,j
\quad\text{and all}\ \  k\ne m \een
on a highest weight $\lambda(u)$.
 The {\it standard Gelfand--Tsetlin
tableau\/} $\mu(u)$ associated with the highest weight $\la(u)$ is
an array of rows $(\la_{r1}(u),\dots,\la_{rr}(u))$ of monic
polynomials in $u$ for $r=1,\dots,n$, where \ben
\la_{ri}(u)=(u+\la_{ri}^{(1)})\dots(u+\la_{ri}^{(p_i)}), \qquad
1\leqslant i\leqslant r\leqslant n, \end{equation*}
 with
$\la_{ni}^{(k)}=\la_{i}^{(k)}$, such that the top row coincides with
$\la(u)$, and \ben
\la_{r+1,i}^{(k)}-\la_{ri}^{(k)}\in\Z_{\geq 0}\qquad\text{and}\qquad
\la_{ri}^{(k)}-\la_{r+1,i+1}^{(k)}\in\Z_{\geq 0} \een for $k=1,\dots,p_i$
and $1\leqslant i\leqslant r\leqslant n-1$.

The following result was shown in \cite{FMO2}.

\begin{theorem}\label{thm:dgactylp} The representation $L(\la(u))$ of the
algebra $W(\pi)$ allows a basis $\{\xi_{\mu}\}$ parametrized by all standard tableaux $\mu(u)$ associated with $\la(u)$ such that the action of
the generators is given by the formulas \beql{aractba} A_r(u)\ts
\xi^{}_{\mu}=\la_{r1}(u)\dots\la_{rr}(u-r+1)\ts \xi^{}_{\mu}, \eeq
for $r=1,\dots,n$, and
\begin{align}\label{bractba}
B_r(-l^{(k)}_{ri})\ts \xi^{}_{\mu}&=
-\la_{r+1,1}(-l^{\tss(k)}_{ri})\dots\la_{r+1,r+1}(-l^{\tss(k)}_{ri}-r)
\ts \xi^{}_{\mu+\de_{ri}^{(k)}},\\
C_r(-l^{(k)}_{ri})\ts \xi^{}_{\mu}&=
\la_{r-1,1}(-l^{\tss(k)}_{ri})\dots\la_{r-1,r-1}(-l^{\tss(k)}_{ri}-r+2)
\ts \xi^{}_{\mu-\de_{ri}^{(k)}}, \non
\end{align}
for $r=1,\dots,n-1$, where $l^{\tss(k)}_{ri}=\la^{(k)}_{ri}-i+1$ and $\xi^{}_{\mu\pm\de_{ri}^{(k)}}$
corresponds to the tableau obtained from $\mu(u)$ by replacing
$\la_{ri}^{(k)}$ by $\la_{ri}^{(k)}\pm 1$, while the vector
$\xi^{}_{\mu}$ is set to be zero if $\mu(u)$ is not a standard tableau. \end{theorem}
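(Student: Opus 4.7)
The plan is to realize $L(\la(u))$ explicitly as a tensor product of classical Gelfand--Tsetlin modules, one for each column of the pyramid $\pi$, and then to read off the action formulas from the classical ones. The genericity hypothesis $\la^{(k)}_i-\la^{(m)}_j\notin\Z$ for $k\neq m$ is exactly what decouples the superscripts $k=1,\dots,\max p_i$ from one another, so one should think of a standard tableau $\mu(u)$ as a collection, indexed by $k$, of classical $\gl_{n_k}$ Gelfand--Tsetlin patterns $\{\la^{(k)}_{ri}\}$, where $n_k=\#\{i:p_i\geqslant k\}$ is the length of the $k$th column and the entries satisfy the classical betweenness relations.

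First I would construct a model. For each column $k$, let $M_k$ be the finite-dimensional irreducible $\gl_{n_k}$-module with highest weight $\bigl(\la^{(k)}_{i_0(k)},\dots,\la^{(k)}_n\bigr)$, where $i_0(k)=\min\{i:p_i\geqslant k\}$; this module carries the classical Gelfand--Tsetlin basis. Turn $M_k$ into a module over $\Y(\gl_{n_k})$ by the evaluation homomorphism, shifted by a spectral parameter $u\mapsto u-c_k$ where $c_k$ is dictated by the horizontal position of the $k$th column in $\pi$. Then form the ordered tensor product $M=M_1\otimes M_2\otimes\cdots$, viewed as a $\Y(\gl_n)$-module via the inclusions $\gl_{n_k}\subset\gl_n$ coming from the pyramid, and restrict to $\Y_\pi(\gl_n)$.

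The key structural step is to check that this restricted action factors through the quotient $W(\pi)=\Y_\pi(\gl_n)/\langle d_1^{(r)}\mid r\geqslant p_1+1\rangle$. This is where the parabolic/Miura-type presentation of $W(\pi)$ from \cite{BK1} is essential: the columns of $\pi$ are precisely the data for the standard Levi decomposition, and $W(\pi)$ embeds in (or surjects onto) a suitable tensor product of enveloping algebras $U(\gl_{n_k})$ by a shifted evaluation. Once this factorization is set up, the joint action of the coefficients of $A_r(u),B_r(u),C_r(u)$ on a pure tensor of classical Gelfand--Tsetlin basis vectors is a product, over $k$, of the standard Gelfand--Tsetlin matrix coefficients, with the spectral-parameter shifts supplying the polynomial prefactors $\la_{r\pm1,\bullet}(-l^{(k)}_{ri})$ that appear in (\ref{bractba}). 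Matching the bookkeeping of indices turns the classical formulas into exactly those stated in the theorem.

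Finally I would identify $M$ with $L(\la(u))$. The tableau with all rows equal to $\la(u)$ (truncated to the appropriate columns) is a joint highest weight vector with the correct eigenvalues for $u^{p_i}d_i(u)$, so $M$ admits $L(\la(u))$ as a subquotient. Under the genericity condition each tensor factor $M_k$ is irreducible and the spectral shifts $c_k$ are in sufficiently generic position that a standard criterion (as in the irreducibility results for tensor products of evaluation $\Y(\gl_n)$-modules, cf.\ Theorems \ref{thm-generic}--\ref{thm-integral} and \cite{M1}) forces $M$ itself to be irreducible. Hence $M\cong L(\la(u))$ and the basis together with the action is the one claimed.

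The main obstacle I anticipate is the second step: pinning down the exact factorization of the action through $W(\pi)$ and the correct spectral shifts so that the Miura-image formulas, written in the $A_r, B_r, C_r$ generators of $W(\pi)$ rather than in the Drinfeld generators of $\Y(\gl_n)$, match (\ref{aractba})--(\ref{bractba}) on the nose. A direct verification of all defining relations of $W(\pi)$ on the model $V$ is possible but combinatorially heavy; the tensor product realization is meant to reduce this to classical $\gl_n$ Gelfand--Tsetlin computations plus Yangian coproduct identities, both of which are well-understood once the genericity assumption has isolated each column.
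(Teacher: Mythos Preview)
The paper does not prove this theorem: it is imported verbatim from \cite{FMO2}, introduced by ``The following result was shown in \cite{FMO2}.'' Within the present paper Theorem~\ref{thm:dgactylp} is a black box on which everything else (admissible sets, relation modules, Theorems~\ref{thm-generic} and~\ref{thm-integral}) is built. So there is no in-paper proof to compare your proposal against; what one can compare is the logical placement and the method used in \cite{FMO2}.

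Your overall plan---realize $L(\la(u))$ as a tensor product of classical Gelfand--Tsetlin modules, one per column, via the Miura transform---is the right conceptual picture and is exactly what the paper later exploits in \S4.1 and Proposition~\ref{hw modules}. Two points, however, need tightening. First, invoking Theorems~\ref{thm-generic}--\ref{thm-integral} for the irreducibility of the tensor product is circular in this paper's architecture: those theorems are proved downstream of Theorem~\ref{thm:dgactylp} (they use the tableau formulas and the relation-module machinery that rests on it), and in any case they treat tensor products of full $\gl_n$-evaluation modules over $\Y(\gl_n)$, whereas your factors $M_k$ are $\gl_{q_k}$-modules with varying $q_k$. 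Second, the Miura map goes $W(\pi)\hookrightarrow U(\gl_{q_1})\otimes\cdots\otimes U(\gl_{q_l})$, not through $\Y(\gl_n)$; the step ``view $M$ as a $\Y(\gl_n)$-module via $\gl_{n_k}\subset\gl_n$ and restrict to $\Y_\pi(\gl_n)$'' is not how the $W(\pi)$-action on $M$ arises and does not obviously match the Miura image, so the passage from Yangian coproduct identities to the $A_r,B_r,C_r$ formulas is not justified as written.

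A self-contained route, closer to \cite{FMO2}, is to bypass the irreducibility criterion altogether: take $V$ to be the span of all standard tableaux with the action \emph{defined} by \eqref{aractba}--\eqref{bractba}, verify the defining relations of $W(\pi)$ directly on $V$ (the genericity hypothesis $\la^{(k)}_i-\la^{(m)}_j\notin\Z$ keeps all denominators nonzero and makes the check a rational-function identity reducible to the classical $\gl_n$ case column by column, exactly as you anticipated), observe that the top tableau is a highest-weight vector of weight $\la(u)$, and finally show $V$ is irreducible by the standard Gelfand--Tsetlin argument that distinct tableaux have distinct $\Gamma$-eigenvalues and that $B_r,C_r$ move between any two adjacent tableaux with nonzero coefficient. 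This avoids both the circularity and the $\Y(\gl_n)$-embedding issue.
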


The action of the operators
$B_r(u)$ and $C_r(u)$ for an arbitrary value of
$u$ can be calculated using the Lagrange
interpolation formula.

For convenience we denote $\xi_{\mu}$ by $[l]$. and set

$$ l_{ri}(u)=(u+l_{ri}^{(1)})\dots(u+l_{ri}^{(p_i)}), \qquad 1\leqslant i\leqslant r\leqslant n, $$
with $l_{ni}^{(k)}=\la_{i}^{(k)}-i+1$, which implies
\begin{equation} \label{standard relations}
l_{r+1,i}^{(k)}-l_{ri}^{(k)}\in\Z_{\geq 0}\qquad\text{and}\qquad
l_{ri}^{(k)}-l_{r+1,i+1}^{(k)}\in\Z_{>0}
\end{equation}
 for $k=1,\dots,p_i$
and $1\leqslant i\leqslant r\leqslant n-1$.

Then $\lambda_{ri}(u)=\l_{ri}(u+i-1)$.
The Gelfand-Tsetlin formulas can be rewritten  as follows:
\beql{aractba} A_r(u) [l]=l_{r1}(u)\dots l_{rr}(u)\ts [l], \eeq
for $r=1,\dots,n$, and
\begin{align}\label{bractba}
B_r(-l^{(k)}_{ri})\ts [l]&=
-l_{r+1,1}(-l^{\tss(k)}_{ri})\dots l_{r+1,r+1}(-l^{\tss(k)}_{ri} )
\ts  [l+\de_{ri}^{(k)}] ,\\
C_r(-l^{(k)}_{ri})\ts [l]&=
l_{r-1,1}(-l^{\tss(k)}_{ri})\dots l_{r-1,r-1}(-l^{\tss(k)}_{ri})
\ts  [l-\de_{ri}^{(k)}] , \non
\end{align}
for $r=1,\dots,n-1$, where $ [l\pm\de_{ri}^{(k)}] $
corresponds to the tableau obtained from $[l]$ by replacing
$l_{ri}^{(k)}$ by $l_{ri}^{(k)}\pm 1$, while the vector
$[l]$ is set to be zero if it does not satisfies \eqref{standard relations}.

By the Lagrange interpolation formula we have
\begin{equation}\label{action abc}
\begin{split}
A_r(u) [l]&=l_{r1}(u)\dots l_{rr}(u)\ts [l],\\
B_r(u)\ts [l] &=-\sum_{i,k}\left(
\frac{\prod\limits_{j,t}(l_{r+1,j}^{(t)}-l_{r,i}^{(k)})}{\prod\limits_{(j,t)\neq (i,k)}(l_{r,j}^{(t)}-l_{r,i}^{(k)})}\right)
\prod\limits_{(j,t)\neq (i,k)}(u+l_{r,j}^{(t)})  \ts [l+\de_{ri}^{(k)}] ,\\
C_r(u)\ts [l] &=
\sum_{i,k} \left( \frac{\prod\limits_{j,t}(l_{r-1,j}^{(t)}-l_{r,i}^{(k)})}{\prod\limits_{(j,t)\neq (i,k)}(l_{r,j}^{(t)}-l_{r,i}^{(k)})}\right)
\prod\limits_{(j,t)\neq (i,k)}(u+l_{r,j}^{(t)})
\ts [l-\de_{ri}^{(k)}].
\end{split}
\end{equation}

It is easy to see that
$d_{r}(u)=a_{r-1}^{-1}(u)a_r(u)=(u-r+1)^{-p_r}A_{r-1}^{-1}A_{r}$.
Then the action of $d_r(u)$ is given by
\begin{equation*}
d_{r}(u)[l]=\frac{l_{r1}(u)\dots l_{rr}(u)}{(u-r+1)^{p_r}l_{r-1,1}(u)\dots l_{r-1,r-1}(u)}[l].
\end{equation*}

Note that the polynomials $l_{r1}(u)\cdots l_{rr}(u)$ and $(u-r+1)^{p_r}l_{r-1,1}(u)\cdots l_{r-1,r-1}(u)$
have the same degree $p_1+\cdots+p_r$. Hence $\frac{l_{r1}(u)\dots l_{rr}(u)}{(u-r+1)^{p_r}l_{r-1,1}(u)\dots l_{r-1,r-1}(u)}$ can be written as the following formal series in $u$:
\ben
\bal
1+\sum_{t=1}^{\infty} d_r^{\ts(t)}(l)\ts u^{-t},
\eal
\een
where $d_r^{\ts(t)}(l)$ is a polynomial in $l_{r,i}^{(k)}$ and $l_{r-1,j}^{(s)}$ with $1\leq i \leq r$, $1\leq k \leq p_i$, $1\leq j \leq r-1$, $1\leq k \leq p_j$.
Thus $d_{r}(u)[l]=d_r^{\ts(t)}(l)[l]$.

Similarly, since
\begin{equation*}
\begin{split}
e_r(u)=u^{p_r-p_{r+1}}A_r^{-1}(u+r-1)B_r(u+r-1),\\
f_r(u)=C_r(u+r-1) A_r^{-1}(u+r-1),
\end{split}
\end{equation*}
the action of $e_r$ and $f_r$ is given by

\begin{equation}\label{action bc}
\begin{split}
e_r(u)\ts [l] &=-\sum_{i,k}\left(
\frac{\prod\limits_{j,t}(l_{r+1,j}^{(t)}-l_{r,i}^{(k)})}{\prod\limits_{(j,t)\neq (i,k)}(l_{r,j}^{(t)}-l_{r,i}^{(k)})}
\frac{\prod\limits_{(j,t)\neq (i,k)}(u+r-1+l_{r,j}^{(t)})}{u^{p_{r+1}-p_r}\prod\limits_{(j,t) }(u+r-1+l_{r,j}^{(t)}+\de_{ri}^{(k)})}\right) \ts [l+\de_{ri}^{(k)}] ,\\
f_r(u)\ts [l] &=
\sum_{i,k}\left(  \frac{\prod\limits_{j,t}(l_{r-1,j}^{(t)}-l_{r,i}^{(k)})}{\prod\limits_{(j,t)\neq (i,k)}(l_{r,j}^{(t)}-l_{r,i}^{(k)})}
\frac{\prod\limits_{(j,t)\neq (i,k)}(u+r-1+l_{r,j}^{(t)})}{\prod\limits_{(j,t) }(u+r-1+l_{r,j}^{(t)})}\right)
\ts [l-\de_{ri}^{(k)}].
\end{split}
\end{equation}

Since $\prod\limits_{(j,t)\neq (i,k)}(u+r-1+l_{r,j}^{(t)})$ is a polynomial in $u$ of degree $p_1+\cdots+p_r-1$ while
$\prod\limits_{(j,t)}(u+r-1+l_{r,j}^{(t)}+\de_{ri}^{(k)})$ and $\prod\limits_{(j,t)}(u+r-1+l_{r,j}^{(t)})$ are polynomials of degree $p_1+\cdots+p_r$, we can write the two rational functions in \eqref{action bc} as follows:
\begin{align*}
\frac{\prod\limits_{(j,t)\neq (i,k)}(u+r-1+l_{r,j}^{(t)})}{u^{p_{r+1}-p_r}\prod\limits_{(j,t) }
(u+r-1+l_{r,j}^{(t)}+\de_{ri}^{(k)})}
&=\sum_{t=p_{r+1}-p_r+1}^{\infty} b_{r,k,i}^{(t)}(l)\ts u^{-t},\\
\frac{\prod\limits_{(j,t)\neq (i,k)}(u+r-1+l_{r,j}^{(t)})}{\prod\limits_{(j,t) }(u+r-1+l_{r,j}^{(t)})}
&=\sum_{t=1}^{\infty} c_{r,k,i}^{(t)}(l)\ts u^{-t},
\end{align*}
where $b_{r,k,i}^{(t)}(l)$ and $c_{r,k,i}^{(t)}(l)$ are polynomials in $l_{r,i}^{(k)}$  with $1\leq i \leq r$, $1\leq k \leq p_i$  and
$b_{r,k,i}^{(p_{r+1}-p_r+1)}(l)=c_{r,k,i}^{(1)}(l)=1$.  Therefore the action of $e_r^{(t)}$ and $f_r^{(t)}$ can be expressed as follows:
\begin{equation}\label{action of generators}
\begin{split}
e_r^{(t)}\ts [l] &=-\sum_{i,k}\left(
\frac{\prod\limits_{j,t}(l_{r+1,j}^{(t)}-l_{r,i}^{(k)})}{\prod\limits_{(j,t)\neq (i,k)}(l_{r,j}^{(t)}-l_{r,i}^{(k)})}
b_{r,k,i}^{(t)}(l)\right)  \ts [l+\de_{ri}^{(k)}] ,\\
f_r^{(t)}\ts [l] &=
\sum_{i,k}\left(  \frac{\prod\limits_{j,t}(l_{r-1,j}^{(t)}-l_{r,i}^{(k)})}{\prod\limits_{(j,t)\neq (i,k)}(l_{r,j}^{(t)}-l_{r,i}^{(k)})}
c_{r,k,i}^{(t)}(l)\right) \ts [l-\de_{ri}^{(k)}].
\end{split}
\end{equation}

\section{Admissible sets of relations}
 In this section we discuss admissible sets of relations and obtain their characterization.  Each such set defines an infinite family of Gelfand-Tsetlin modules over $W(\pi)$.

Let $a,b\in\mathbb{C}$, from now on whenever we write $a\geq b\ (\text{respectively }a>b)$ we will mean $a-b\in\mathbb{Z}_{\geq 0}$ (respectively $a-b\in\mathbb{Z}_{>0}$).\\
Set $\mathfrak{V}:=\{(k,i,j)\ |\ 1\leq j\leq i\leq n,\ 1\leq k \leq p_j\}$. For triples $(k,i,j)$, $(r,s,t)$ from $\mathfrak{V}$  we say that $[l]$ satisfies the relation $(k,i,j)\geq (r,s,t)$ (respectively $(k,i,j)>(r,s,t)$)  if $l_{ij}^{(k)}\geq l_{st}^{(r)}\ (\text{respectively }l_{ij}^{(k)}> l_{st}^{(r)})$.

From now on when we write a triple $(k,i,j)$  we assume that $1\leq j\leq i\leq n,1 \leq k \leq p_j$ without mentioning this restriction.

\begin{definition}A subset of relations
$$\{ (k_1,i_{1},j_1)\geq (\mathrm{or} >) \ (k_2,i_2,j_2), \cdots,\  (k_m,i_{m},j_m)\geq(\mathrm{or} >)\ (k_1,i_1,j_1)\}$$  will be called a \emph{loop}.
\end{definition}

Set $\mathcal{R}=\mathcal{R}_1\cup \mathcal{R}_2$, where
\begin{align*}
\mathcal{R}_1&:=\{(k,i,j)\geq(k',i-1,j'),\ (r,s-1,t)>(r',s,t')  |\ 2\leq i,s \leq n  \}\\
\mathcal{R}_{2}&:=\{(k,n,i)\geq(k',n,j)\ |\ 1\leq i\neq j\leq n\}.
\end{align*}

From now on we will consider sets of relations $\mathcal{C}$ which are subsets of  $\mathcal{R}$ and always assume that they do not contain a loop
in the top row, that is  the sets of relations do  not contain any subset of the form $\{ (k_1,n,j_1)\geq  (k_2,n,j_2), \cdots,\  (k_m,n,j_m)\geq  (k_1,n,j_1)\}$.

Given $\mathcal{C}$, denote by $\mathfrak{V}(\mathcal{C})$ the set of all triples $(k,i,j)$ in $\mathfrak{V}$  involved in some relation of $\mathcal{C}$.

Let $\mathcal{C}_1$ and $\mathcal{C}_2$ be two subsets of $\mathcal{C}$.
We say that $\mathcal{C}_1$ and $\mathcal{C}_2$ are disconnected
if
$\mathfrak{V}(\mathcal{C}_1)\cap\mathfrak{V}(\mathcal{C}_2)=\emptyset$,
 otherwise  we say that $\mathcal{C}_1$ and $\mathcal{C}_2$ are connected.
 A subset  $\mathcal{C}\subseteq\mathcal{R}$ is called \emph{decomposable} if it can be decomposed into the
  union of two disconnected subsets of $\mathcal{R}$, otherwise $\mathcal{C}$ is called \emph{indecomposable}. Clearly,
any subset of $\mathcal{R}$ is  a union of disconnected  indecomposable sets, moreover, such decomposition is unique.


\begin{definition}\label{def-sets} Let $\mathcal{C}$ be a subset of $\mathcal{R}$, and
$[l]$ a Gelfand-Tsetlin tableau.
\begin{itemize}
\item[1]
We say that $[l]$ \emph{satisfies $\mathcal{C}$} if $[l]$ satisfies all relations in $\mathcal{C}$ and $l_{ki}^{(s)}-l_{kj}^{(t)}\in \mathbb{Z}$ only if they are in the same indecomposable subset.
\item[2.] We call a tableau $[l]$ \emph{noncritical} if $l_{ki}^{(s)}\neq l_{kj}^{(t)}$ for all $(s,i)\neq (t,j)$,  otherwise the tableau $[l]$ is \emph{critical}.
\item[3.] We call  $\mathcal{C}$ \emph{noncritical} if any $[l]$ satisfying $\mathcal{C}$ is noncritical.
\item[4.] Suppose $[l]$ satisfy $\mathcal{C}$.
Denote by $\mathcal{B}_{\mathcal{C}}([l])$   the set of all tableaux $[l+z]$ satisfying $\mathcal{C}$,
with $z_{ij}^{(k)}\in \mathbb Z $, $z_{nj}^{(k)}=0$ and by $V_{\mathcal{C}}([l])$ the complex vector space
spanned by $\mathcal{B}_{\mathcal{C}}([l])$.
\item[5.] Let $\mathcal{C}_1,\mathcal{C}_2$ be noncritical sets of relations. We say that $\mathcal{C}_{1}$ implies $\mathcal{C}_{2}$ if any tableau that satisfies $\mathcal{C}_{1}$ also satisfies $\mathcal{C}_{2}$. We say that $\mathcal{C}_{1}$ is equivalent to $C_{2}$ if $\mathcal{C}_{1}$ implies $\mathcal{C}_{2}$ and $\mathcal{C}_{2}$ implies $\mathcal{C}_{1}$.
\end{itemize}
\end{definition}
Set $\mathcal{S}:=\{(k,i+1,j)\geq(k,i,j)>(k,i+1,j+1)\ |\   i\leq n-1 \}$. We say that a tableau $[l]$ is \emph{standard} if and only if $[l]$ satisfies all the relations in $\mathcal{S}$
 and $l_{ij}^{(k)}-l_{ij}^{(r)}\notin \mathbb{Z}$ for any $k\neq r$.

Using Definition \ref{def-sets}  the basis in Theorem \ref{thm:dgactylp} containing a standard Gelfand-Tsetlin tableau $[l]$ can be described as  $\mathcal{B}_{\mathcal{S}}([l])$.


\begin{definition}\label{def-main}
Let $\mathcal{C}$ be any subset of $\mathcal{R}$. We call $\mathcal{C}$  \emph{admissible} if for any $[l]$ satisfying  $\mathcal{C}$,  the vector space $V_{\mathcal{C}}([l])$  has a structure of a $W(\pi)$-module, endowed with the action of $W(\pi)$ given by the formulas \eqref{action abc}.
\end{definition}

It follows from Theorem \ref{thm:dgactylp} that $\mathcal{S}$ is admissible. Hence, our goal is to determine admissible sets of relations.

Let $\mathcal{C}$ be a subset of $\mathcal{R}$ and $(k,i,j)\in \mathfrak{V}(\mathcal{C})$. We call $(k,i,j)$ \emph{maximal} (with respect to  $\mathcal{C}$) if there is no
$(r,s,t)\in \mathfrak{V}(\mathcal{C})$ such that $(r,s,t)\geq(\text{or} > ) \ (k,i,j)$. Minimal triples are defined similarly.

Description of admissible sets is a difficult problem. Nevertheless, the {\it relations removal method} ({\it RR-method} for short), developed in \cite{FRZ} can be applied
in the case of finite $W$-algebras and provides
 an effective tool of constructing admissible subsets of $\mathcal{R}$.

Let $\mathcal{C}$ be any admissible subset of $\mathcal{R}$ and  $(k,i,j)\in \mathfrak{V}(\mathcal{C})$ be maximal or minimal.
Denote by $\mathcal{C}\setminus \{(k,i,j)\}$ the set of relations obtained from $\mathcal{C}$ by removing all relations that involve $(k,i,j)$. We say that $\widetilde{\mathcal{C}}\subseteq\mathcal{C}$ is obtained from $\mathcal{C}$ by the RR-method if $\widetilde{\mathcal{C}}$ is obtained from $\mathcal{C}$ by a sequence of such  removings of relations for different indexes. That is, there exist $\{(k_1,i_1,j_1),\ldots,(k_t,i_t,j_t)\}\subseteq\mathfrak{V}(\mathcal{C})$ such that $((k_{r+1},i_{r+1},j_{r+1}))$ is maximal or minimal with respect to $\mathcal{C}\setminus \{(k_1,i_1,j_1)\}\setminus \{(k_2,i_2,j_2)\}\cdots \setminus \{(k_r,i_r,j_r)\}$ and $\widetilde{\mathcal{C}}=\mathcal{C}\setminus \{(k_1,i_1,j_1)\}\setminus \{(k_2,i_2,j_2)\}\cdots \setminus \{(k_t,i_t,j_t)\}$.

Let $\Omega_n$ be the free abelian group generated by the Kronecker delta's $\delta^{(k)}_{ij}$,
$1\leq j \leq i\leq n-1$, $1\leq k \leq p_j$. We can identify $\Omega_n$ with the set of integral tableaux with zero top rows.

\begin{theorem}\label{rr}
Let $ \mathcal{C}_1$ be any admissible subset of $\mathcal{R} $ and suppose that $\mathcal{C}_2$ is obtained from $ \mathcal{C}_1$ by the RR-method,  then $\mathcal{C}_2$ is admissible.
\end{theorem}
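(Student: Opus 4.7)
My plan is to induct on the number of triples removed in the RR-method, which reduces the claim to a single RR-step: assume $\mathcal{C}_1$ is admissible and $(k_0,i_0,j_0)\in \mathfrak{V}(\mathcal{C}_1)$ is maximal or minimal with respect to $\mathcal{C}_1$, and show that $\mathcal{C}_2 := \mathcal{C}_1\setminus \{(k_0,i_0,j_0)\}$ remains admissible. The maximal and minimal cases are symmetric, so I focus on the former: every relation of $\mathcal{C}_1 \setminus \mathcal{C}_2$ has the shape $(k_0,i_0,j_0)\geq (\text{or }>)\ (r,s,t)$. Fixing a tableau $[l]$ satisfying $\mathcal{C}_2$, I must verify that the formulas \eqref{action abc} define a $W(\pi)$-module structure on $V_{\mathcal{C}_2}([l])$.

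Since $\mathcal{C}_2$ imposes no constraint on $(k_0,i_0,j_0)$, the basis $\mathcal{B}_{\mathcal{C}_2}([l])$ splits according to the integer shift $z$ of that entry, giving a direct sum decomposition $V_{\mathcal{C}_2}([l]) = \bigoplus_{z\in\mathbb{Z}} U_z$. Each generator of $W(\pi)$ shifts a single entry $l_{r,i}^{(k)}$ by $\pm 1$. When $(k,r,i)=(k_0,i_0,j_0)$ the action moves between $U_z$ and $U_{z\pm 1}$, and the image tableau automatically satisfies $\mathcal{C}_2$ because it differs from its preimage only at the unconstrained position. The nontrivial case is $(k,r,i)\neq (k_0,i_0,j_0)$: the action stays inside a single $U_z$, and I must show that every coefficient $c([l'])$ in $e_r^{(t)}[l']$ or $f_r^{(t)}[l']$ vanishes whenever $[l'\pm \delta_{r,i}^{(k)}]$ violates a relation of $\mathcal{C}_2$.

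For this step I would lift the input $[l']\in U_z$ to an auxiliary tableau $[\tilde{l}']$ satisfying $\mathcal{C}_1$ by adjusting only the entry $l'^{(k_0)}_{i_0 j_0}$. Maximality of $(k_0,i_0,j_0)$ guarantees that for every sufficiently large shift all removed relations hold, while the $\mathcal{C}_2$-relations (which do not involve $(k_0,i_0,j_0)$) are unchanged. The coefficient $c([l'])$ is a rational function of the entries and depends on $l'^{(k_0)}_{i_0 j_0}$ only when $i_0 \in \{r-1,r,r+1\}$; a Zariski-genericity argument lets me choose the shift so that $c([\tilde{l}'])\neq 0$ whenever $c([l'])\neq 0$. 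Admissibility of $\mathcal{C}_1$ applied to $[\tilde{l}']$ then forces $[\tilde{l}'\pm \delta_{r,i}^{(k)}]$ to satisfy $\mathcal{C}_1$; since this tableau agrees with $[l'\pm \delta_{r,i}^{(k)}]$ outside $(k_0,i_0,j_0)$ and $\mathcal{C}_2$ ignores that position, $[l'\pm \delta_{r,i}^{(k)}]$ satisfies $\mathcal{C}_2$.

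The technical obstacle I expect to require the most care is the noncriticality requirement built into ``satisfies $\mathcal{C}$'' in Definition \ref{def-sets}(1): when the indecomposable component of $(k_0,i_0,j_0)$ in $\mathcal{C}_1$ contains other row-$i_0$ entries, the lift must repair prescribed integer-difference relations in row $i_0$, while the original $[l']$ only enjoys the weaker $\mathcal{C}_2$-noncriticality in which $(k_0,i_0,j_0)$ is isolated. This can be handled by allowing the auxiliary shift to be complex rather than integer, chosen to restore integrality with the same-component row-$i_0$ partners; one then verifies that the admissibility conclusion transfers back to the integer tableau $[l'\pm\delta_{r,i}^{(k)}]$ because the relations of $\mathcal{C}_2$ being tested are insensitive to the value at $(k_0,i_0,j_0)$. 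Once the coefficient-vanishing is established, the $W(\pi)$-relations themselves hold on $V_{\mathcal{C}_2}([l])$ because they hold on the auxiliary spaces $V_{\mathcal{C}_1}([\tilde{l}'])$ and the decomposition into the $U_z$ is compatible with the action, which completes the inductive step and hence the theorem.
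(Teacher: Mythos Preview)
Your core device---reduce to a single RR-step, lift a $\mathcal{C}_2$-tableau $[l]$ to a $\mathcal{C}_1$-tableau by adjusting only the removed entry $(k_0,i_0,j_0)$, and transfer information back via a rational-function argument---is exactly the paper's. But the proposal misidentifies what must be checked. Admissibility (Definition~\ref{def-main}) means that the formulas~\eqref{action abc}, \emph{with the standing convention that any tableau outside $\mathcal{B}_{\mathcal{C}_2}([l])$ is declared zero}, satisfy the defining relations of $W(\pi)$. Well-definedness of the action is therefore automatic; you never need the \emph{coefficients} of out-of-range tableaux to vanish, and in fact they generally do not. For instance, take the admissible set $\mathcal{C}_2=\{(k,i,j)>(k,i+1,j+1)\}$ of Example~\ref{example admissible}(iii) and $[l']$ with $l'^{(k)}_{i,j}=l'^{(k)}_{i+1,j+1}+1$: then $[l'-\delta^{(k)}_{i,j}]$ violates $\mathcal{C}_2$, yet its coefficient in $f_i^{(t)}[l']$, which by~\eqref{action of generators} involves only rows $i$ and $i-1$, is generically nonzero. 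Thus the statement your third paragraph sets out to prove is false, and that paragraph does no work toward the theorem.

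The genuine task---checking each defining relation $g=0$ on $V_{\mathcal{C}_2}([l])$---appears only in your last sentence, and ``they hold on the auxiliary spaces and the decomposition into the $U_z$ is compatible with the action'' is not yet an argument: $V_{\mathcal{C}_1}([\tilde l'])$ and $V_{\mathcal{C}_2}([l])$ are different spaces with different truncation rules. The paper makes this step precise. Choose $[\gamma]$ agreeing with $[l]$ off $(k_0,i_0,j_0)$ and satisfying $\mathcal{C}_1$; for every integer $m$ with $|m|>3$, $[\gamma+m\delta^{(k_0)}_{i_0j_0}]$ satisfies $\mathcal{C}_1$. The key observation is that the set of $w$ for which $[l+w]$ survives in $V_{\mathcal{C}_2}([l])$ coincides with the set for which $[\gamma+m\delta^{(k_0)}_{i_0j_0}+w]$ survives in $V_{\mathcal{C}_1}$: the extra $\mathcal{C}_1$-relations all have the form $(k_0,i_0,j_0)\geq(\text{or }>)\ \cdot$ and hold with room to spare for large $m$, while the shifts $w$ arising from any fixed defining relation are uniformly bounded. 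Hence $g[l]=\sum_{w\in A} g_w(l)[l+w]$ with the \emph{same} index set $A$ as in $g[\gamma+m\delta^{(k_0)}_{i_0j_0}]$, so admissibility of $\mathcal{C}_1$ gives $g_w(\gamma+m\delta^{(k_0)}_{i_0j_0})=0$ for infinitely many $m$; since $g_w$ is rational in the $(k_0,i_0,j_0)$-entry, it vanishes identically in that variable and in particular $g_w(l)=0$. Your lifting and rationality ideas are exactly the right tools---they just need to be aimed at the coefficients $g_w$ of the relation $g$, not at the coefficients of individual generators.
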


\begin{proof}
Suppose $\mathcal{C}_2$ is obtained from $ \mathcal{C}_1$ by removing the relations involving $(k,i,j)$. To show $\mathcal{C}_2$ is admissible it is sufficient to prove that for any $[l]$ satisfying $\mathcal{C}_2$
and any defining relation  $g=0$  of $W(\pi)$ we have $g[l]=0$. The proof of this fact generalizes the argument of the proof of Theorem 4.7 in \cite{FRZ}.

Assume $(k, i, j)$ to be maximal (resp. minimal)
and $m$ some positive (resp. negative) integer  with $|m|>3$. Let $[\gamma]$ be a tableau
satisfying the relations  $\mathcal{C}_1$ and
$\gamma_{st}^{(r)}=l_{st}^{(r)}$ if $(s, t, r)\neq (k,i, j)$.
Then $[\gamma+ m\delta^{(k)}_{ij}]$  satisfies $ \mathcal{C}_1$ and  $V_{\mathcal{C}_{1}}([\gamma +m\delta^{(k)}_{ij}])$ is a $W(\pi)$-module.
 We have
\begin{align*}
g([\gamma+ m\delta^{(k)}_{ij}])&=\sum\limits_{w\in A}g_{w}(\gamma+ m\delta^{(k)}_{ij})[\gamma+ m\delta^{(k)}_{ij} + w],\\
\end{align*}
where $A\subset \Omega_n$ is such that $[\gamma + m\delta^{(k)}_{ij} +w] $ are nonzero in the corresponding formulas.
One has that $[l+w]=0 $  in $V_{\mathcal C_2}([l])$ if and only if $[\gamma+z+w]=0$ in $V_{\mathcal C_1}([\gamma+m\delta^{(l)}_{ij}])$ when $|m|>3$.
Thus,
\begin{align*}
g[l]&=\sum_{w \in A }g_{w}([l])([l+w]).
\end{align*}
Since $V_{\mathcal{C}_1}([\gamma+m\delta^{(l)}_{ij}])$ is a module for infinitely many values of $m$ and
$g_{w}([\gamma+ m\delta^{(l)}_{ij}])$  are rational functions in the variable $m$,
we conclude that $g_{w}([l+w])=0$ for all $w\in A$ and, hence,  $ \mathcal{C}_2 $  is admissible.
\end{proof}

Since empty set can be obtained from $ \mathcal{S}$ applying the RR-method finitely many times, Theorem \ref{rr} immediately implies:

\begin{corollary}
Empty set is admissible. In particular, let $[l]$ be a tableau with $\mathbb Z$-independent entries (i.e. the differences of entries on the same row are non-integers) $l_{ij}^{k}$, $1\leq j\leq i\leq n,1 \leq k \leq p_j$,
$\mathcal{B} ([l])$   the set of all tableaux $[l+z]$
with $z_{ij}^{(k)}\in \mathbb Z $, $z_{nj}^{(k)}=0$ and  $V ([l])$ the complex vector space
spanned by $\mathcal{B} ([l])$. Then $V ([l])$ is a   $W(\pi)$-module  with the action of generators given by the formulas \eqref{action abc}.
\end{corollary}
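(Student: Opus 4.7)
The plan is to derive the corollary directly from Theorem \ref{rr}: since Theorem \ref{thm:dgactylp} shows $\mathcal{S}$ is admissible, it is enough to exhibit a legal RR-removal sequence that reduces $\mathcal{S}$ to the empty set. My first step would be the observation that every relation in $\mathcal{S}$ preserves the upper index $k$. Hence $\mathcal{S}$ splits as a disjoint union of sub-systems $\mathcal{S}^{(k)}$, one per value of $k$, supported on the triangles $T_k=\{(k,i,j):1\le j\le i\le n,\ k\le p_j\}$, and RR-removals performed inside different $T_k$ do not interfere. It therefore suffices to empty each $T_k$ separately.

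For a fixed $k$, I would view $\mathcal{S}^{(k)}$ as a directed graph on $T_k$ with arrows $(k,i+1,j)\to(k,i,j)$ and $(k,i,j)\to(k,i+1,j+1)$. The function $\psi(i,j)=i-2j$ strictly decreases along every arrow, so this graph is acyclic; moreover, deleting vertices preserves acyclicity. Since any nonempty finite directed acyclic graph has at least one source --- equivalently, a triple that appears on no right-hand side of a remaining relation, i.e., a \emph{maximal} triple in the sense of the paper --- I would greedily delete maximal triples one at a time until $T_k$ is exhausted. Concatenating these sequences over $k$ gives the required RR-reduction of $\mathcal{S}$ to the empty set, so by Theorem \ref{rr} the empty set is admissible.

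For the ``in particular'' claim, observe that when $\mathcal{C}=\emptyset$ one has $\mathfrak{V}(\mathcal{C})=\emptyset$, so no two triples can lie in a common indecomposable subset of $\mathcal{C}$. The condition in Definition \ref{def-sets}(1) then reduces to requiring that, for any two distinct entries on the same row, their difference is not an integer --- which is precisely the $\mathbb{Z}$-independence hypothesis on $[l]$. Hence $[l]$ satisfies $\emptyset$, $V([l])$ coincides with $V_{\emptyset}([l])$, and admissibility of the empty set equips $V([l])$ with the claimed $W(\pi)$-action \eqref{action abc}. Since essentially all the nontrivial content has been absorbed into Theorem \ref{rr}, I do not expect any real obstacle: the peeling step is a routine combinatorial induction based on the monovariant $\psi$, and matching Definition \ref{def-sets}(1) to the $\mathbb{Z}$-independence hypothesis is purely a definitional verification.
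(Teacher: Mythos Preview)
Your proposal is correct and follows the same line as the paper: the paper's entire argument is the single sentence ``since empty set can be obtained from $\mathcal{S}$ applying the RR-method finitely many times, Theorem \ref{rr} immediately implies'' the corollary, and you are simply supplying the details the paper leaves implicit. Your decomposition $\mathcal{S}=\bigsqcup_k\mathcal{S}^{(k)}$, the monovariant $\psi(i,j)=i-2j$ showing acyclicity, and the greedy source-removal are all sound and make the RR-reduction explicit; your unpacking of Definition \ref{def-sets}(1) for $\mathcal{C}=\emptyset$ to match the $\mathbb{Z}$-independence hypothesis is likewise correct and again more than the paper spells out.
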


If we denote by $R_{i}$ the number of entries of the form $l_{ij}^{(k)}$ on a tableau. We have a natural action of $G:=S_{R_{1}}\times\cdots\times S_{R_{n}}$ on Gelfand-Tsetlin tableaux by permutation of elements of the same row of the tableau.
Since the Gelfand-Tsetlin formulas \eqref{action abc} are $G$-invariant, we immediately have:

\begin{lemma}\label{lemma permutation}
Fix $i$. Let $\sigma$ be a permutation of the set $\{(k,i,j),1\leq j\leq i ,1 \leq k \leq p_j\}$. If
$ \mathcal{C}$ is admissible then $ \sigma\mathcal{C}$ is admissible.
\end{lemma}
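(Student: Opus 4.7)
The plan is to exploit the $G$-invariance of the Gelfand--Tsetlin formulas \eqref{action abc} by transporting the module structure from $V_{\mathcal{C}}([l])$ to a corresponding space built from $\sigma\mathcal{C}$ via an obvious permutation bijection on tableaux. First I would define, for any tableau $[l]$, the permuted tableau $\sigma\cdot[l]$ by $(\sigma\cdot l)_{i,j'}^{(k')}=l_{i,j}^{(k)}$ whenever $\sigma(k,i,j)=(k',i,j')$ and $(\sigma\cdot l)_{s,t}^{(r)}=l_{s,t}^{(r)}$ for $s\neq i$. A direct check on relations of the form $(k_1,i_1,j_1)\geq(k_2,i_2,j_2)$ shows that $[l]$ satisfies $\mathcal{C}$ if and only if $\sigma\cdot[l]$ satisfies $\sigma\mathcal{C}$, because the entry sitting at position $\sigma(k,i,j)$ in $\sigma\cdot[l]$ equals the entry at $(k,i,j)$ in $[l]$. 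Moreover, since the noncritical/decomposability conditions in Definition \ref{def-sets} depend only on the multiset of row entries, they are preserved under $\sigma$. Hence $\sigma\cdot(\cdot)$ induces a bijection $\mathcal{B}_{\mathcal{C}}([l])\to\mathcal{B}_{\sigma\mathcal{C}}(\sigma\cdot[l])$, extending to a linear isomorphism $\Phi\colon V_{\mathcal{C}}([l])\to V_{\sigma\mathcal{C}}(\sigma\cdot[l])$.

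Next I would verify that $\Phi$ intertwines the two actions given by \eqref{action abc}. This is the content of the $G$-invariance of \eqref{action abc}: every coefficient appearing in the formulas for $A_r(u)$, $B_r(u)$, $C_r(u)$ is a symmetric function of the pairs $(t,j)$ indexing entries within a single row $r$ (products of the form $\prod_{j,t}(l_{r\pm1,j}^{(t)}-l_{r,i}^{(k)})$ and $\prod_{(j,t)\neq(i,k)}(l_{r,j}^{(t)}-l_{r,i}^{(k)})$), and the outer sum $\sum_{i,k}$ runs over all such pairs. Thus applying $\sigma$ to a tableau merely relabels the summation indices and the factors in the products, so $\Phi(g\cdot[l+z])=g\cdot\Phi([l+z])$ for every generator $g$ of $W(\pi)$ whose action is given by \eqref{action abc}.

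Finally, since $V_{\mathcal{C}}([l])$ is a $W(\pi)$-module by admissibility of $\mathcal{C}$, the isomorphism $\Phi$ endows $V_{\sigma\mathcal{C}}(\sigma\cdot[l])$ with a $W(\pi)$-module structure. But the previous step shows that this transported structure coincides with the one defined by formulas \eqref{action abc} directly on $V_{\sigma\mathcal{C}}(\sigma\cdot[l])$. As $[l]$ ranges over all tableaux satisfying $\mathcal{C}$, $\sigma\cdot[l]$ ranges over all tableaux satisfying $\sigma\mathcal{C}$, so this proves $\sigma\mathcal{C}$ is admissible.

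The only subtle point I anticipate is the bookkeeping in step two: one must confirm that the factors in the numerators and denominators of \eqref{action abc}, as well as the distinguished ``missing'' pair $(i,k)$ over which the outer sum is taken, are all moved coherently by $\sigma$. This is straightforward once one writes out a single term explicitly and observes that the whole expression depends on row $r$ only through the multiset $\{l_{r,j}^{(t)}\}$ and the choice of one distinguished member of that multiset, both of which are manifestly $S_{R_r}$-equivariant.
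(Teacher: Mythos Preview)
Your proposal is correct and is essentially an expanded version of the paper's argument: the paper simply observes that the Gelfand--Tsetlin formulas \eqref{action abc} are invariant under the natural action of $G=S_{R_1}\times\cdots\times S_{R_n}$ on tableaux by row permutations, and declares the lemma immediate. You have spelled out exactly what this invariance means and how it transports the module structure, which is the full content of the paper's one-line justification.
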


To visualize the relations we will draw an arrow down-right to indicate the relation $\geq$ and an arrow up-right to indicate the relation $>$.

\begin{example}\label{example admissible}
The following sets are admissible by  Theorem \ref{thm:dgactylp} and Theorem \ref{rr}.
$$ [(i)]\ \
\begin{tabular}{c c }
\xymatrix @C=0.1em {
 (k,i+1,j) \ar[rd]& &    &    \\
  & (k,i,j)    & &    \\
}
\end{tabular}
\ \ [(ii)]\ \
\begin{tabular}{c c }
\xymatrix @C=0.1em {
   & &  (k,i+1,j)\ar[rd]   & &   \\
  & (k,i,j)  \ar[ru]\ar[rd]   & &(k,i,j+1)   \\
    &     &(k,i-1,j)\ar[ru]  &   \\
}
\end{tabular}$$

$
[(iii)]\ \
\begin{tabular}{c c }
\xymatrix @C=0.1em {
 & &  (k,i+1,j+1)   &    \\
  & (k,i,j) \ar[ru]    & &    \\
}
\end{tabular}
$\\

It follows from Lemma \ref{lemma permutation} that the permutations of these sets are also admissible.
\end{example}

\begin{example}\label{example non admissilbe}
The following sets are not admissible.

\begin{tabular}{c c}
\xymatrix @C=0.1em {
 &(k,i+1,j)\ar[rd]   &   \\
 (k,i,j-1)  \ar[ru]  &    &(k,i,j)   \\
}

&\qquad \ \ \xymatrix@C=0.1em{
 (k,i,j)\ar[rd]   &    &(k,i,j+1)   \\
 &(k,i-1,j)\ar[ru]   &   \\
}\\
\end{tabular}

 Hence, the permutations of these sets are not admissible either by Lemma \ref{lemma permutation}.
\end{example}

\begin{definition}
Let $\mathcal{C}$ be an indecomposable noncritical subset of $\mathcal{R} $. A subset of $\mathcal{C}$ of the form $\{(k,i,j)> (k,i+1,t),\ (k,i+1,s)\geq(k,i,r)\}$    with $j<r$ and $s<t$ will be called a \emph{cross}.
\end{definition}

\begin{proposition}
Let $\mathcal{C}$ be an indecomposable noncritical subset of $\mathcal{R} $.
If $\mathcal{C}$ contains a cross, then it is not admissible.
\end{proposition}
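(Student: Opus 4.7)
The plan is to argue by contradiction. Assume $\mathcal{C}$ is admissible and contains the cross $\mathcal{X}=\{(k,i,j)>(k,i+1,t),\ (k,i+1,s)\geq(k,i,r)\}$ with $j<r$ and $s<t$. The goal is to exhibit a tableau $[l]\in V_{\mathcal{C}}([l])$ and a defining relation of $W(\pi)$ whose image under the formulas \eqref{action abc}, with the convention that tableaux outside $\mathcal{B}_{\mathcal{C}}([l])$ are set to zero, fails to hold.

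First I would choose $[l]$ satisfying $\mathcal{C}$ with the $\geq$-inequality of $\mathcal{X}$ tight, $l_{i+1,s}^{(k)}=l_{i,r}^{(k)}$, and with the row $i-1$ entries $l_{i-1,j'}^{(t')}$ in generic position, in particular distinct from $l_{i,r}^{(k)}+1$. Indecomposability and noncriticality of $\mathcal{C}$ leave enough freedom for this. The tableau $[l+\delta_{i,r}^{(k)}]$ violates the $\geq$-inequality and hence lies outside $\mathcal{B}_{\mathcal{C}}([l])$. Nevertheless, a direct application of the $C_i(u)$-formula in \eqref{action abc} to $[l+\delta_{i,r}^{(k)}]$ shows that $[l]$ appears in the output with coefficient
\[
\frac{\prod_{j',t'}\bigl(l_{i-1,j'}^{(t')}-l_{i,r}^{(k)}-1\bigr)}{\prod_{(j',t')\ne(r,k)}\bigl(l_{i,j'}^{(t')}-l_{i,r}^{(k)}-1\bigr)}\prod_{(j',t')\ne(r,k)}\bigl(u+l_{i,j'}^{(t')}\bigr),
\]
which is a nonzero rational function of $u$ by the generic choice of row $i-1$. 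In other words, the subspace spanned by the tableaux outside $\mathcal{B}_{\mathcal{C}}([l])$ is not invariant under the $W(\pi)$-action obtained by extending \eqref{action abc} formally to all integer-shifted tableaux of $[l]$.

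To convert this leakage into a concrete failure of a $W(\pi)$-relation on $V_{\mathcal{C}}([l])$, I would apply the shifted-Yangian commutator $[e_i^{(p)},f_i^{(q)}]=-\sum_{t=0}^{p+q-1} d_i^{\prime(t)}\,d_{i+1}^{(p+q-t-1)}$ to the one-parameter family $[l_m]:=[l+m\,\delta_{i+1,s}^{(k)}]$ for $m\in\mathbb{Z}_{\geq 0}$. Each $[l_m]$ satisfies $\mathcal{C}$, and for $m\geq 1$ the tableau $[l_m+\delta_{i,r}^{(k)}]$ belongs to $\mathcal{B}_{\mathcal{C}}([l_m])$. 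Admissibility forces the commutator identity to hold on each $V_{\mathcal{C}}([l_m])$; computing both sides via \eqref{action of generators}, the right-hand side acts diagonally and is therefore continuous in $m$, while the left-hand side is rational in $m$. At $m=0$ the tableau $[l_0+\delta_{i,r}^{(k)}]$ drops out of the basis, and with it a nonzero contribution to the left-hand side equal to the product of the displayed coefficient with the (nonvanishing, by genericity) $f_i^{(q)}$-coefficient returning $[l_m+\delta_{i,r}^{(k)}]$ to $[l_m]$. The resulting discontinuity on only one side of the identity is the sought contradiction.

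The main obstacle will be the bookkeeping of this leakage argument: one must verify that no other dropped intermediate tableau at $m=0$ contributes a cancelling jump. This is where the indecomposability of $\mathcal{C}$ is essential, as it rules out coincidental integer relations among entries outside the cross that could conspire to produce such a cancellation; noncriticality, meanwhile, keeps all denominators nonzero throughout. The overall strategy follows the pattern of the $\gl_n$-case in \cite{FRZ}, adapted here to the pyramidal multiplicities $p_i$ and the richer generator structure of $W(\pi)$.
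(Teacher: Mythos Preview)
Your approach is genuinely different from the paper's. The paper does not attempt a direct failure of a defining relation at all: it assumes $\mathcal{C}$ admissible, then applies the RR-method (Theorem~\ref{rr}) to strip away vertices until what remains is one of the small configurations in Example~\ref{example non admissilbe}, which were already shown to be non-admissible. Since the RR-method preserves admissibility, this is a contradiction. The combinatorial claim that a cross can always be reduced to one of those examples is deferred to~\cite{FRZ}. The virtue of this route is that all the delicate verification of which tableaux drop out and whether contributions cancel is done once, on a three-vertex set, rather than inside an arbitrary indecomposable $\mathcal{C}$.

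Your direct argument has a real gap precisely at the point you flag as ``the main obstacle.'' Indecomposability of $\mathcal{C}$ does \emph{not} guarantee that $[l_m+\delta_{i,r}^{(k)}]$ is the only intermediate tableau whose membership in $\mathcal{B}_{\mathcal{C}}([l_m])$ changes at $m=0$: indecomposability says the relation graph is connected, not that $(k,i+1,s)$ is related to a unique row-$i$ entry. If $\mathcal{C}$ also contains, say, $(k,i+1,s)\geq(k',i,r')$ and this relation is tight at $m=0$ as well, a second path drops out and your jump need not be nonzero. Similarly, your freedom to choose $[l]$ with the $\geq$-inequality tight and row $i{-}1$ generic is not guaranteed by noncriticality alone, since $\mathcal{C}$ may impose further constraints tying those entries together; and when $i+1=n$ the family $[l_m]=[l+m\,\delta_{i+1,s}^{(k)}]$ is not even available, as the top row is fixed. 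The paper's reduction via the RR-method is exactly what lets one avoid this bookkeeping: one first isolates the cross from the rest of $\mathcal{C}$, and only then checks the explicit failure.
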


\begin{proof}
Indeed, assume that $\mathcal{C}$ is admissible and contains a cross. Then,  applying the RR-method to $\mathcal{C}$ we will obtain a set of relations from Example \ref{example non admissilbe} (see details  in  \cite{FRZ}). Therefore $\mathcal{C}$ is not admissible.
\end{proof}




\begin{definition}\label{def reduced}
Let $\mathcal{C}$ be any noncritical set of relations. We call  $\mathcal{C}$ \emph{reduced} if for every $(k,i,j)\in \mathfrak{V}(\mathcal{C})$ the following conditions
are satisfied:
\begin{itemize}
\item[(i)] There exist at most one $(r,t)$ such that $(r,i+1,t)\geq(k,i,j)\in \mathcal{C}$,
\item[(ii)] There exist at most one $(r,t)$ such that $(k,i,j)>(r,i+1,t)\in\mathcal{C}$,
\item[(iii)] There exist at most one $(r,t)$ such that $(k,i,j)\geq(r,i-1,t)\in\mathcal{C}$,
\item[(iv)] There exist at most one $(r,t)$ such that $(r,i-1,t)>(k,i,j)\in \mathcal{C}$,
\item[(v)] Any relation in the top row is not implied by other relations.

\end{itemize}
\end{definition}

The following important result follows from \cite{FRZ}, Theorem 4.17.
\begin{theorem}\label{equiv to reduced set}
Any noncritical set of relations is equivalent to an unique reduced set of relations.
\end{theorem}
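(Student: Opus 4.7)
The plan is to adapt and apply the argument of Theorem 4.17 in \cite{FRZ} to the present setting of $W(\pi)$, where each column $j$ of a tableau carries $p_j$ entries indexed by $k$. The key observation is that a noncritical set $\mathcal{C}$ induces a partial order on $\mathfrak{V}(\mathcal{C})$ via its transitive closure (using transitivity of $\geq$ and $>$ together with the compatibility $\geq\circ>\, =\, >$), and two noncritical sets are equivalent precisely when they generate the same partial order. Thus the reduced form should play the role of the Hasse diagram of this poset, with conditions (i)--(iv) of Definition \ref{def reduced} encoding uniqueness of covering relations between adjacent rows, and condition (v) eliminating redundancy within the top row.

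For existence, I would first form the transitive closure $\bar{\mathcal{C}}$ of a given noncritical $\mathcal{C}$, which is automatically equivalent to $\mathcal{C}$. Within each indecomposable component, noncriticality forces differences between comparable entries to be strictly positive integers, so the induced poset is locally finite and covering relations are well defined. I would then extract a minimal equivalent subset $\mathcal{C}_{\mathrm{red}}$ by retaining, for every $(k,i,j)\in\mathfrak{V}(\mathcal{C})$ and each of the four directions appearing in (i)--(iv), only the single covering adjacent-row relation (the tightest neighbor in row $i\pm 1$ with respect to $\geq$ or $>$). For the top row, any relation in $\mathcal{R}_2$ implied by the remaining ones is discarded, fulfilling (v). By transitivity and the adjacency structure of $\mathcal{R}_1$, the set $\mathcal{C}_{\mathrm{red}}$ generates the same closure as $\mathcal{C}$, hence they are equivalent.

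For uniqueness, suppose $\mathcal{C}_1$ and $\mathcal{C}_2$ are both reduced and equivalent; then they generate identical partial orders on $\mathfrak{V}$. For each triple $(k,i,j)$, condition (i) combined with the closure forces the unique covering $\geq$-predecessor in row $i+1$ to be the same element in both sets, and similarly for (ii)--(iv). In the top row, condition (v) rules out redundancies, so the top-row relations also match, proving $\mathcal{C}_1=\mathcal{C}_2$.

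The hard part will be verifying that the adjacency restriction in $\mathcal{R}_1$ still permits a uniquely determined reduced form even though transitive chains in the closure can skip rows through the multi-column structure indexed by $k$. Concretely, I will need to check that every covering relation in the induced partial order between entries of row $i$ and row $i+1$ is realized by a single element of $\mathcal{R}_1$, rather than only as a composite passing through intermediate rows, and that distinct covering candidates ruled out by (i)--(iv) really are implied by the one retained. This parallels the corresponding subtlety in \cite[Theorem 4.17]{FRZ}, and will follow from noncriticality (which rules out degenerate chains) together with the fact that the four types (i)--(iv) exhaust all adjacent-row covering configurations compatible with $\mathcal{R}_1$.
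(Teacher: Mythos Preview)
Your approach is precisely what the paper does: it offers no independent argument but simply records that the statement follows from \cite[Theorem~4.17]{FRZ}, and your proposal is exactly to adapt that result to the present setting with the extra column index $k$. Your Hasse-diagram sketch (transitive closure, then retain the unique tightest adjacent-row neighbor in each of the four directions, then prune redundant top-row relations) is a faithful outline of that argument, so there is nothing to add.
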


\begin{definition}
Let $\mathcal{C}$ be any subset of $\mathcal{R}$. Given $(k,i,j),\ (r,s,t)\in \mathfrak{V}(\mathcal{C})$ we will write:
\begin{itemize}
\item[(i)] $(k,i,j)\succeq_{\mathcal{C}} (r,s,t)$ if, there exists $\{(k_1,i_{1},j_{1}),\ldots,(k_m,i_{m},j_{m})\}\subseteq \mathfrak{V}(\mathcal{C})$ such that
\begin{align}\label{geq sub C}
\{(k,i,j)\geq (k_1,i_{1},j_1), \cdots,\  (k_m,i_{m},j_m)\geq (r,s,t)\}&\subseteq \mathcal{C}
\end{align}
\item[(ii)] We write $(k,i,j)\succ_{\mathcal{C}} (r,s,t)$ if there exists $\{(k_1,i_{1},j_{1}),\ldots,(k_m,i_{m},j_{m})\}\subseteq \mathfrak{V}(\mathcal{C})$ satisfying (\ref{geq sub C}), with one of the inequalities being $>$.
\end{itemize}
\end{definition}

Let $\mathcal{C}$ be an indecomposable set and $\prec$ be the lexicographical order. We say that $\mathcal{C} $ is {\it pre-admissible}  if the following conditions are satisfied:
\begin{itemize}
\item[(i)] $\mathcal{C} $ is noncritical;
\item[(ii)] $(k, i,j)\succ_{\mathcal{C}}(r,i,t)$ and only if $(k,j)\prec(r,t)$ for $i=1,\ldots,n-1$;
\item[(iii)] $(k, n,j)\succeq_{\mathcal{C}}(r,n,t)$ and only if $(k,j)\prec(r,t)$ for $i=1,\ldots,n-1$;
\item[(iv)] $\mathcal{C} $ does not contain crosses.
\end{itemize}

An arbitrary set $\mathcal{C}$ is pre-admissible if every  indecomposable subset of $\mathcal{C}$ is pre-admissible.
From now on we will only consider pre-admissible sets, since any admissible set is pre-admissible (see \cite{FRZ} Section $4$).
Denote by $\mathfrak{F}$ the set of all indecomposable sets $\mathcal{C}$ which satisfy the following condition:
for every adjoining triples $(k,i,j)$ and $(r,i,s)$, $1\leq i\leq n-1$,
there exist $(k_1,j_1)$ and $(k_2,j_2)$ satisfying one of the following conditions:

\begin{equation}\label{condition for admissible}
\begin{split}
 &\{(k,i,j)>(k_1,i+1,j_1)\geq (r,i,s),\ (k,i,j)\geq (k_2,i,j_2)>(r,i,s)\}\subseteq \mathcal{C},\\
 &\{(k,i,j)>(k_1,i+1,j_1),(k_2,i+1,j_2)\geq (r,i,s)\}\subseteq  \mathcal{C}, (k_1,j_1)\prec(k_2,j_2).\\
 \end{split}
\end{equation}

The main result of this section is the following theorem which gives a characterization of admissible sets of relations. A detailed proof will be given in Section $\S 6$. For the universal enveloping algebra of
$\gl_n$ this result was established in \cite{FRZ}, Theorem 4.27.

\

\begin{theorem}\label{sufficiency of admissible} A pre-admissible set of relations  $\mathcal{C}$ is admissible if and only if
$\mathcal{C}$ is a union of indecomposable sets from $\mathfrak{F}$.
\end{theorem}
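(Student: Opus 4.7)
The plan is to prove both directions by reducing to a row-by-row local analysis and then invoking the RR-method (Theorem \ref{rr}) together with the known admissibility of the standard set $\mathcal{S}$ (Theorem \ref{thm:dgactylp}). First, I would reduce to the indecomposable case: if $\mathcal{C}=\mathcal{C}_1\sqcup\mathcal{C}_2$ is a disconnected union, then by pre-admissibility (in particular Definition \ref{def-sets}(1)) the entries indexed by $\mathfrak{V}(\mathcal{C}_1)$ and $\mathfrak{V}(\mathcal{C}_2)$ are $\mathbb Z$-independent, so the Gelfand--Tsetlin formulas \eqref{action abc} factor through independent shifts on the two blocks and $V_{\mathcal{C}}([l])$ carries a $W(\pi)$-module structure if and only if each $V_{\mathcal{C}_i}([l])$ does. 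Hence I may assume $\mathcal{C}$ is indecomposable throughout.

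For sufficiency, assume $\mathcal{C}\in\mathfrak{F}$. Using Theorem \ref{equiv to reduced set} I first replace $\mathcal{C}$ by its reduced form, so that each triple $(k,i,j)\in\mathfrak{V}(\mathcal{C})$ is linked to at most one neighbor on each side. I would then extend $\mathcal{C}$ to a larger pre-admissible set $\widetilde{\mathcal{C}}\supseteq\mathcal{C}$, obtained by filling in, for every pair of adjoining triples, the remaining standard-type relations prescribed by \eqref{condition for admissible}, and, outside $\mathfrak{V}(\mathcal{C})$, by adjoining a full $\mathcal{S}$-type chain of strict and non-strict inequalities on the additional entries. The defining property of $\mathfrak{F}$ ensures that this can be done without creating a cross and while preserving non-criticality. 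The resulting $\widetilde{\mathcal{C}}$ is equivalent to $\mathcal{S}$ (up to a permutation inside each row, and Lemma \ref{lemma permutation} takes care of that), hence admissible by Theorem \ref{thm:dgactylp}. Finally, I recover $\mathcal{C}$ from $\widetilde{\mathcal{C}}$ by successively removing every triple $(k,i,j)\in\mathfrak{V}(\widetilde{\mathcal{C}})\setminus\mathfrak{V}(\mathcal{C})$ which is maximal or minimal at the time of removal; such an ordering exists because the extra triples sit at the top/bottom of the added chains. Theorem \ref{rr} then gives that $\mathcal{C}$ is admissible.

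For necessity, suppose $\mathcal{C}$ is pre-admissible, indecomposable, and admissible, but that some pair of adjoining triples $(k,i,j)$ and $(r,i,s)$ violates both alternatives in \eqref{condition for admissible}. The strategy is to isolate this pair via the RR-method to land on one of the forbidden configurations of Example \ref{example non admissilbe}. Concretely, I would repeatedly strip off triples from $\mathcal{C}$ at maximal or minimal positions that are not needed to witness either of the two arrow patterns in \eqref{condition for admissible}, keeping the triples $(k,i,j)$ and $(r,i,s)$ and at most one separating entry in row $i+1$ (or $i-1$). By Theorem \ref{rr} the resulting subset is still admissible; but the assumed failure of both conditions forces this residual subset to be one of the configurations of Example \ref{example non admissilbe} (or a permutation of such a configuration), contradicting admissibility via Lemma \ref{lemma permutation}. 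Hence \eqref{condition for admissible} must hold for every adjoining pair, i.e.\ $\mathcal{C}\in\mathfrak{F}$.

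The main obstacle I expect is the sufficiency direction, and specifically the construction of the extension $\widetilde{\mathcal{C}}$: one has to check that filling in the standard-type inequalities demanded by \eqref{condition for admissible} produces a cross-free pre-admissible set globally, not just locally around each adjoining pair. This is essentially a combinatorial compatibility check across rows, and it is where the $\mathfrak{F}$-condition does its work, since the two alternatives in \eqref{condition for admissible} are precisely the two ways to ``route'' an arrow through row $i+1$ without later forcing a crossing with another mandatory arrow. Once that check is carried out carefully (adapting the corresponding $\gl_n$ argument in \cite{FRZ}, Theorem 4.27, to account for the extra index $k$ coming from the multiple columns of the pyramid $\pi$), the rest of the argument is a direct application of Theorems \ref{thm:dgactylp} and \ref{rr}.
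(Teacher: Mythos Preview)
Your necessity direction matches the paper's argument exactly: strip away triples via the RR-method until one of the forbidden configurations of Example \ref{example non admissilbe} appears, contradicting Theorem \ref{rr}.

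Your sufficiency direction, however, follows a genuinely different route from the paper, and it contains a real gap. The paper does \emph{not} embed $\mathcal{C}$ into a permutation of $\mathcal{S}$ and then RR-remove back down. Instead, Section~6 proves sufficiency by a direct verification of every defining relation of $W(\pi)$ on an arbitrary tableau $[l]$ satisfying $\mathcal{C}$. For each relation (the seven families $[d,d]$, $[e,f]$, $[d,e]$, $[d,f]$, $[e_i,e_i]$, $[e_i,e_{i\pm1}]$, and the Serre-type relations), the paper compares the coefficient of each target tableau with the same coefficient computed either on a generic tableau $[v]$ with $\mathbb{Z}$-independent entries, or on a small ``local'' admissible set $\mathcal{C}'$ drawn from Example \ref{example admissible}, and then passes to the limit $v\to l$. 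The $\mathfrak{F}$-condition \eqref{condition for admissible} is used precisely in the verification of the Serre relations (item 7), where it guarantees that the relevant local configuration is one of the admissible ones rather than one of the forbidden ones.

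The gap in your approach is the extension step. You assert that the $\mathfrak{F}$-condition lets you enlarge $\mathcal{C}$ to some $\widetilde{\mathcal{C}}$ equivalent to $\sigma\mathcal{S}$, but this is far from automatic: \eqref{condition for admissible} is a purely local condition on adjoining pairs in a single row, and you would have to show that the various local completions are globally compatible (no crosses are created between rows, noncriticality is preserved, and the order induced on each row agrees with the lexicographic one as required by pre-admissibility). You acknowledge this as ``the main obstacle'' and defer to \cite{FRZ}, but there is no indication that \cite{FRZ} proceeds by extension-then-removal either; the present paper, by the same authors, chooses direct verification, which suggests that the extension argument is at best no easier. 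Even granting the existence of $\widetilde{\mathcal{C}}$, you would still need to show that every added triple can be removed in an order where each is maximal or minimal at the moment of removal, and this again requires a global combinatorial argument you have not supplied. In short, your strategy is plausible and would be elegant if it worked, but as written the heart of the sufficiency proof is missing.
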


For an admissible set of relations $\mathcal{C}$ and any $[l]$ which satisfies $\mathcal{C}$, the $W(\pi)$-module $V_{\mathcal{C}}([l])$ is a Gelfand-Tsetlin module.
We will call it a \emph{relation module}.

\section{irreducibility of relation modules}

In this section we establish the criterion of irreducibility of the relation module $V_{\mathcal{C}}([l])$.
We say that a set
$\mathcal{C}$ is the  {\it maximal} set of relations for $[l]$ if $[l]$ satisfies $\mathcal{C}$ and if $[l]$ satisfies a set of relations $\mathcal{C}'$, then $\mathcal{C}$ implies $\mathcal{C}'$.



\begin{lemma}\label{lemma separation}
Let $\sum_{\mu}c_{\mu}[l_{\mu}]$ be a vector in $V_{\mathcal{C}}([l])$ and nonzero $c_{\mu}$.
Then   $[l_{\mu}]\in V_{\mathcal{C}}([l])$ for each $\mu$.
\end{lemma}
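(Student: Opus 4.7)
The plan is to use the diagonal action of the Gelfand-Tsetlin subalgebra $\Gamma$ on $V_{\mathcal{C}}([l])$ to extract each summand $[l_{\mu}]$ individually from the vector $v=\sum_{\mu}c_{\mu}[l_{\mu}]$. Since $V_{\mathcal{C}}([l])$ is a $W(\pi)$-module by Theorem \ref{thm-B}, it is in particular a $\Gamma$-submodule, so $P\cdot v\in V_{\mathcal{C}}([l])$ for every $P\in\Gamma$; thus it suffices to produce elements of $\Gamma$ that act as projectors onto the individual $[l_{\mu}]$.

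First I would verify that distinct tableaux of the basis $\mathcal{B}_{\mathcal{C}}([l])$ carry distinct characters of $\Gamma$. By \eqref{aractba} each generator $a_{r}^{(k)}$ acts on a tableau $[l']$ by a symmetric function of the entries of its $r$-th row, so two tableaux with coinciding row-multisets in every row would share the same character. Because $\mathcal{C}$ is admissible, hence pre-admissible and therefore noncritical, all entries of each tableau in $\mathcal{B}_{\mathcal{C}}([l])$ are distinct, and entries of the same row that differ by integers must lie in a single indecomposable block of $\mathcal{C}$. Inside such a block the pre-admissibility conditions (ii)--(iv) together with the absence of crosses and the linear order imposed by $\succ_{\mathcal{C}}$ force a strict total order on the entries occupying any fixed row, so no nontrivial intra-row permutation preserves $\mathcal{C}$. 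Combined with the fact that the top row is frozen (since $z_{nj}^{(k)}=0$), this shows that distinct tableaux of $\mathcal{B}_{\mathcal{C}}([l])$ realize pairwise distinct $\Gamma$-characters.

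With separation in hand, only finitely many distinct characters $\chi_{\mu}$ of $\Gamma$ occur among the $[l_{\mu}]$ with $c_{\mu}\neq 0$, and a standard Lagrange-type interpolation in a suitable generator $a_{r}^{(k)}$ (or in a finite product of such generators, if no single one separates all $\chi_{\mu}$) produces for each $\mu$ a polynomial $P_{\mu}\in\Gamma$ satisfying $P_{\mu}\cdot [l_{\nu}]=\delta_{\mu\nu}[l_{\nu}]$. Applying $P_{\mu}$ to $v$ yields $c_{\mu}[l_{\mu}]\in V_{\mathcal{C}}([l])$, and since $c_{\mu}\neq 0$ we conclude $[l_{\mu}]\in V_{\mathcal{C}}([l])$. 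The main obstacle is the separation step: ruling out nontrivial intra-row permutations compatible with $\mathcal{C}$ requires a careful case analysis based on the structure of indecomposable pre-admissible sets, and it is this step, rather than the routine interpolation that follows, that does the real work.
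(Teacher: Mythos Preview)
Your argument is correct and follows essentially the same approach as the paper: both separate the basis tableaux via the diagonal action of the Gelfand--Tsetlin subalgebra (the paper uses the eigenvalues of $A_r(u)$ directly and proceeds by induction on the number of summands, which is equivalent to your Lagrange-interpolation projector). Your treatment of the separation step---invoking the indecomposable-block condition of Definition~\ref{def-sets}(1) together with pre-admissibility condition~(ii) to rule out two distinct tableaux whose rows are permutations of one another---is more carefully justified than the paper's one-line appeal to ``noncriticality of $\mathcal{C}$'', but the underlying idea is the same.
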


\begin{proof}
Suppose $c_{\mu}[l_{\mu}]+c_{\nu}[l_{\nu}]\in V_{\mathcal{C}}([l])$ and $[l_{\mu}]$, $[l_{\nu}]$ have different entries in $r$-th row. By Theorem \ref{thm:dgactylp} we have
$ A_r(u)\ts [l^{}_{\mu}]= a_{\mu} [l^{}_{\mu}]$, $ A_r(u)\ts [l^{}_{\mu}]= a_{\nu} [l^{}_{\mu}]$. Moreover, $a_{\mu}= a_{\nu}$ if and only if
the $r$-th row of $[l_{\nu}]$ is a permutation of that of $[l_{\mu}]$ which is a contradiction with the non criticality of $\mathcal{C}$. So $a_{\mu}\neq a_{\nu}$ and
 both $[l_{\mu}]$ and $[l_{\nu}] $ are in $V_{\mathcal{C}}(l)$. The general case can easily be  seen by induction on the number of terms in the linear combination.
\end{proof}

\begin{lemma}\label{lemma sequences}
Let $\mathcal{C}$ be an admissible set of relations,
 $[l]$ and $[\gamma]$ be tableaux satisfying $\mathcal{C}$ and $l^{(k)}_{n,i}=\xi^{(k)}_{n,i}$, $l^{(k)}_{r,i}-\xi^{(k)}_{r,i}\in \mathbb{Z}, 1\leq r \leq n-1$ . Then there exist $\{(k_t,i_t,j_t)\}_{t=1,\ldots,s}\subseteq \mathfrak{V}(\mathcal{C})$
such that for any $r\leq s$, $[l+\sum_{t=1}^{r}\epsilon_t\delta^{(k_t)}_{i_t,j_t}]$ satisfies  $\mathcal{C}$
and $[l+\sum_{t=1}^{s}\epsilon_t\delta^{(k_t)}_{i_t,j_t}]=[\gamma]$,
where
$\epsilon_t= 1$ if $\xi^{(k_t)}_{i_t,j_t}-l^{(k_t)}_{i_t,j_t}\geq 0$ and
$\epsilon_t= - 1$ if $\xi^{(k_t)}_{i_t,j_t}-l^{(k_t)}_{i_t,j_t}< 0$.
\end{lemma}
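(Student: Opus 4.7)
The plan is to proceed by induction on the $\ell^{1}$-discrepancy
\[
D([l],[\gamma]) := \sum_{(k,i,j)} \bigl|\xi^{(k)}_{i,j}-l^{(k)}_{i,j}\bigr|,
\]
which is a finite nonnegative integer by hypothesis. The base case $D=0$ forces $[l]=[\gamma]$ and the empty sequence works. For the inductive step it is enough to exhibit one triple $(k_{1},i_{1},j_{1})\in\mathfrak{V}(\mathcal{C})$ with $d:=\xi^{(k_{1})}_{i_{1},j_{1}}-l^{(k_{1})}_{i_{1},j_{1}}\ne 0$ such that $[l+\epsilon_{1}\delta^{(k_{1})}_{i_{1},j_{1}}]$ still satisfies $\mathcal{C}$, where $\epsilon_{1}=\sgn(d)$; then $D$ drops by one and the inductive hypothesis, applied to $[l+\epsilon_{1}\delta^{(k_{1})}_{i_{1},j_{1}}]$ and $[\gamma]$, finishes the construction.

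To build such a triple, I would set $S^{+}:=\{(k,i,j):\xi^{(k)}_{i,j}>l^{(k)}_{i,j}\}$ and $S^{-}$ symmetrically. One of them is nonempty; assume $S^{+}$ is, the other case being dual. Starting from any $(k,i,j)\in S^{+}\cap\mathfrak{V}(\mathcal{C})$, the plan is to attempt to raise $l^{(k)}_{i,j}$ by one. This fails only if some single relation of $\mathcal{C}$ of the form $(k',i',j')\ge(k,i,j)$ or $(k',i',j')>(k,i,j)$ has zero slack, so that $l^{(k')}_{i',j'}$ currently equals $l^{(k)}_{i,j}$ or $l^{(k)}_{i,j}+1$ respectively, with $(k',i',j')$ in an adjacent row. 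Since $[\gamma]$ satisfies the same relation and $\xi^{(k)}_{i,j}>l^{(k)}_{i,j}$, one obtains $\xi^{(k')}_{i',j'}>l^{(k')}_{i',j'}$, so $(k',i',j')\in S^{+}\cap\mathfrak{V}(\mathcal{C})$. Replacing $(k,i,j)$ by $(k',i',j')$ and iterating will produce a chain strictly ascending for $\succ_{\mathcal{C}}$; since $\mathcal{C}$ is admissible this order has no cycles (relations of $\mathcal{R}_{1}$ only connect adjacent rows, top-row loops are explicitly forbidden, and a same-row cycle would create non-critical equalities) and $\mathfrak{V}(\mathcal{C})$ is finite, so the chain must terminate at a triple that actually admits the upward move.

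It then remains to verify that $[l+\delta^{(k_{1})}_{i_{1},j_{1}}]$ satisfies $\mathcal{C}$ in the full sense of Definition~\ref{def-sets}, not merely the single relations directly containing $(k_{1},i_{1},j_{1})$. Changing one entry by $\pm 1$ preserves the integrality structure within indecomposable components automatically, and a hypothetical new same-row coincidence would force an intermediate adjacent-row triple to be a blocker in the derived chain witnessing the original strict inequality, which is exactly the situation ruled out by the terminal choice of $(k_{1},i_{1},j_{1})$.

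The principal obstacle will be this final verification: ensuring that the local greedy move is compatible with the global indecomposability and non-criticality data of $\mathcal{C}$, rather than only with the single relation it was designed to respect. This is where the structural characterization of admissible sets from Theorem~\ref{sufficiency of admissible} (via the indecomposable configurations of $\mathfrak{F}$) is invoked, following the template of the analogous connectivity lemma for $\gl_{n}$ in \cite{FRZ}.
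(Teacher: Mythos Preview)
Your approach is correct and genuinely different from the paper's. The paper argues by induction on $\#\mathfrak{V}(\mathcal{C})$: it removes a maximal triple $(k,i,j)$ via the RR-method to obtain a smaller set $\mathcal{C}'$, applies the inductive hypothesis to produce a $\mathcal{C}'$-path from $[l]$ to the tableau agreeing with $[\gamma]$ everywhere except at $(k,i,j)$, and then prepends or appends the required moves at $(k,i,j)$ (according to the sign of $l^{(k)}_{i,j}-\xi^{(k)}_{i,j}$). Your induction is instead on the $\ell^{1}$-distance $D([l],[\gamma])$ for a \emph{fixed} $\mathcal{C}$, with the key step being the blocker-chasing argument inside $S^{+}$ (or $S^{-}$). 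The paper's route recycles the RR-machinery already in place; yours is more elementary and never leaves $V_{\mathcal{C}}([l])$ for an auxiliary relation set.

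Two remarks. First, your termination argument can be sharpened: every step of the blocker chain is a single relation of $\mathcal{C}$ with the new triple on the larger side, and since top-row entries have $\xi=l$ the chain stays in rows $1,\dots,n-1$, where $\geq$-relations raise the row index and $>$-relations lower it; any cycle would therefore contain at least one strict relation and force $x\succ_{\mathcal{C}}x$, impossible because some tableau satisfies $\mathcal{C}$. Second, your final paragraph is unnecessary. Once $(k_{1},i_{1},j_{1})$ has no blocker, every relation of $\mathcal{C}$ containing it on the smaller side has positive slack, every relation containing it on the larger side only loosens, relations not containing it are untouched, and the integrality clause of Definition~\ref{def-sets}(1) is invariant under integer shifts; hence $[l+\epsilon_{1}\delta^{(k_{1})}_{i_{1},j_{1}}]$ satisfies $\mathcal{C}$ outright. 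Noncriticality then follows automatically because admissible sets are pre-admissible and hence noncritical. You do not need to invoke Theorem~\ref{sufficiency of admissible} at all.
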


\begin{proof}
We prove the statement by induction on $\#\mathfrak{V}(\mathcal{C} )$. It is obvious if $\#\mathfrak{V}(\mathcal{C})=2$. Assume $\#\mathfrak{V}(\mathcal{C} )=n$. Let $(k,i,j)$ be maximal
and $\mathcal{C}'$ be the set obtained from $\mathcal{C}$ by RR-method i.e. removing all relations that involve $(k,i,j)$. By induction, there exist
 sequences $(k_t',i_t',j_t')$ $1\leq t \leq s$
such that for any $r\leq s$, $[l+\sum_{t=1}^{r}\epsilon_t\delta^{(k_t)}_{i_t,j_t}]$ satisfies  $\mathcal{C}'$
and $[l+\sum_{t=1}^{s}\epsilon_t\delta^{(k_t)}_{i_t,j_t}]=[\gamma+l^{(k)}_{ij}-\xi^{(k)}_{ij}]$.

If $l^{(k)}_{ij}-\xi^{(k)}_{ij}=m \geq 0$, set  $(k_t,i_t,j_t)=(k_t',i_t',j_t')$ for $1\leq t \leq s$, and $(k_t,i_t,j_t)=(k,i,j)$ for $s+1 \leq t\leq t+m$.

If $l^{(k)}_{ij}-\xi^{(k)}_{ij}=m < 0$, set $(k_t,i_t,j_t)=(k,i,j)$ for $ 1 \leq t\leq m$ and $(k_{m+t},i_{m+t},j_{m+t})=(k_t',i_t',j_t')$
for $1\leq t \leq s$.

The statement is proved.
\end{proof}

Now we can prove Theorem \ref{thm-A}.

\begin{theorem}\label{thm-irr}
 Let $\mathcal{C}$ be an admissible set of relations. The module $V_{\mathcal{C}}([l])$ is irreducible if and only if $\mathcal{C}$ is the maximal set of relations satisfied by $[l]$.
\end{theorem}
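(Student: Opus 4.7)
The plan is to establish both implications, relying crucially on the two preceding lemmas.

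For the forward direction, that $V_{\mathcal{C}}([l])$ irreducible implies $\mathcal{C}$ maximal, I would argue by contrapositive. Suppose $\mathcal{C}$ is not maximal among admissible sets of relations satisfied by $[l]$; then there exists an admissible $\mathcal{C}'$ satisfied by $[l]$ that strictly implies $\mathcal{C}$, so $\mathcal{B}_{\mathcal{C}'}([l]) \subsetneq \mathcal{B}_{\mathcal{C}}([l])$ and consequently $V_{\mathcal{C}'}([l]) \subsetneq V_{\mathcal{C}}([l])$ as vector spaces. Both are defined by the same Gelfand-Tsetlin formulas \eqref{action abc} and $\mathcal{C}'$ is admissible, so $V_{\mathcal{C}'}([l])$ is a proper nonzero $W(\pi)$-submodule of $V_{\mathcal{C}}([l])$, contradicting irreducibility.

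For the converse, assume $\mathcal{C}$ is maximal and let $N$ be a nonzero $W(\pi)$-submodule of $V_{\mathcal{C}}([l])$. The first step is to extract a basis tableau from $N$: given any nonzero $v = \sum_\mu c_\mu [l_\mu] \in N$, the argument of Lemma \ref{lemma separation} (using diagonality of the $A_r(u)$-action together with the noncriticality of $\mathcal{C}$) produces polynomials in the Gelfand-Tsetlin subalgebra that isolate each $[l_\mu]$ with $c_\mu \neq 0$ inside $N$. Fix such a tableau $[\gamma] \in N \cap \mathcal{B}_{\mathcal{C}}([l])$.

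Next I would show that every $[\gamma'] \in \mathcal{B}_{\mathcal{C}}([l])$ lies in $N$. By Lemma \ref{lemma sequences}, there is a chain $[\gamma] = [\gamma_0], \ldots, [\gamma_s] = [\gamma']$ with $[\gamma_{t+1}] = [\gamma_t + \epsilon_t \delta^{(k_t)}_{i_t, j_t}]$, $\epsilon_t \in \{\pm 1\}$, and each $[\gamma_t] \in \mathcal{B}_{\mathcal{C}}([l])$. The key observation is that applying the element $B_{i_t}\bigl(-(\gamma_t)^{(k_t)}_{i_t, j_t}\bigr)$ (respectively $C_{i_t}\bigl(-(\gamma_t)^{(k_t)}_{i_t, j_t}\bigr)$) of $W(\pi)$ to $[\gamma_t]$, in view of \eqref{bractba}, yields exactly a scalar multiple of $[\gamma_{t+1}]$, the scalar being, up to sign, $\prod_{(s,j)}\bigl((\gamma_t)^{(s)}_{i_t+1, j} - (\gamma_t)^{(k_t)}_{i_t, j_t}\bigr)$ in the $B$-case and the analogous product in row $i_t-1$ in the $C$-case.

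The main obstacle is showing that these scalars are nonzero, and this is exactly where maximality of $\mathcal{C}$ is essential. Suppose the $B$-scalar vanishes because $(\gamma_t)^{(s)}_{i_t+1, j} = (\gamma_t)^{(k_t)}_{i_t, j_t}$; then $l^{(s)}_{i_t+1, j} - l^{(k_t)}_{i_t, j_t}$ equals a specific integer $d$, and hence either $(s, i_t+1, j) \geq (k_t, i_t, j_t)$ (when $d \geq 0$) or $(k_t, i_t, j_t) > (s, i_t+1, j)$ (when $d < 0$) is a relation in $\mathcal{R}$ satisfied by $[l]$. By maximality of $\mathcal{C}$, this relation must be implied by $\mathcal{C}$. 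However, in the first subcase the relation fails in $[\gamma_{t+1}]$, since the difference there equals $-1$; and in the second subcase it fails in $[\gamma_t]$, since the difference there equals $0$. Either alternative contradicts membership in $\mathcal{B}_{\mathcal{C}}([l])$ guaranteed by Lemma \ref{lemma sequences}, and the analogous argument disposes of the $C$-case by the same dichotomy for $l^{(s)}_{i_t-1,j} - l^{(k_t)}_{i_t,j_t}$. Iterating the now-nonzero transitions along the chain gives $[\gamma'] \in N$, so $N = V_{\mathcal{C}}([l])$ and the module is irreducible.
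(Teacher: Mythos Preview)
Your converse direction is correct and essentially matches the paper's argument: both use Lemma~\ref{lemma separation} to extract a basis tableau from a nonzero submodule and Lemma~\ref{lemma sequences} to build a chain, then check that each transition coefficient is nonzero via maximality. Your use of $B_r(-l_{ri}^{(k)})$ and $C_r(-l_{ri}^{(k)})$ in place of the paper's $e_r^{(p_{r+1}-p_r+1)}$ and $f_r^{(1)}$ is a harmless variant (in fact slightly cleaner, since the evaluated $B_r,C_r$ hit a single tableau rather than a linear combination).

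Your forward direction, however, has a genuine gap. From $\mathcal{C}\subsetneq\mathcal{C}'$ with both admissible you conclude that $V_{\mathcal{C}'}([l])$ is a $W(\pi)$-\emph{submodule} of $V_{\mathcal{C}}([l])$, arguing that ``both are defined by the same Gelfand--Tsetlin formulas''. But admissibility of $\mathcal{C}'$ only says that $V_{\mathcal{C}'}([l])$ is a $W(\pi)$-module in its own right; it does not say that the $W(\pi)$-action computed inside $V_{\mathcal{C}}([l])$ preserves the subspace spanned by $\mathcal{B}_{\mathcal{C}'}([l])$. Concretely, if an extra relation in $\mathcal{C}'\setminus\mathcal{C}$ has the form $(a,r-1,b)>(k,r,i)$, then for $[l']\in\mathcal{B}_{\mathcal{C}'}([l])$ with $l'^{(a)}_{r-1,b}=l'^{(k)}_{r,i}+1$ the tableau $[l'+\delta_{ri}^{(k)}]$ lies in $\mathcal{B}_{\mathcal{C}}\setminus\mathcal{B}_{\mathcal{C}'}$, yet its coefficient in $e_r[l']$ involves only rows $r$ and $r{+}1$ and is generically nonzero. (Take $n=3$, $p_i=1$, $\mathcal{C}=\emptyset$, $\mathcal{C}'=\{(1,1,1)>(1,2,2)\}$ to see this explicitly.) So $V_{\mathcal{C}'}([l])$ need not be closed under the ambient action, and your proper-submodule conclusion is unjustified.

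The paper avoids this by arguing differently: if $\mathcal{C}$ is not maximal there is a pair $(s,r{+}1,i)$, $(t,r,j)$ with $l_{r+1,i}^{(s)}-l_{r,j}^{(t)}\in\mathbb{Z}$ but no relation between them in $\mathcal{C}$. One then exhibits tableaux $[\gamma],[\xi]\in\mathcal{B}_{\mathcal{C}}([l])$ with $\gamma_{r+1,i}^{(s)}\ge\gamma_{r,j}^{(t)}$ and $\xi_{r,j}^{(t)}>\xi_{r+1,i}^{(s)}$, and observes from the explicit formulas that the $e_r$- and $f_{r+1}$-coefficients vanish exactly at the barrier $\mu_{r+1,i}^{(s)}=\mu_{r,j}^{(t)}$, so $W(\pi)\cdot[\gamma]$ is a proper submodule not containing $[\xi]$. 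Note that the relevant ``barrier'' relation here is $(s,r{+}1,i)\ge(t,r,j)$, which is of the type whose violation \emph{is} detected by the transition coefficients; this is why the paper's argument succeeds where the naive subspace-inclusion argument fails.
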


\begin{proof}
Suppose
$\mathcal{C}$ is not the maximal set of relations satisfied by $[l]$. Then there exist
$l_{r+1,i}^{(s)}-l_{r,j}^{(t)}\in \mathbb{Z}$ and there is not relation between $(s,r+1,i)$ and $(t,r,j)$.
So there exists tableau $[\gamma]\in W(\pi) [l]$ such that $\gamma_{r+1,i}^{(s)}-\gamma_{r,j}^{(t)}\in \mathbb{Z}_{\geq 0}$
and $\xi\in W(\pi) [l]$ such that $\xi_{r ,j}^{(t)}-\xi_{r+1,i}^{(s)}\in \mathbb{Z}_{>0}$.
By  Equation \eqref{action bc} one has that  $\xi$ is not in the submodule $W(\pi) [\gamma]$ of $V_{\mathcal{C}}([l])$ generated by $[\gamma]$, thus $V_{\mathcal{C}}([l])$ is not irreducible.


Conversely, let $\mathcal{C}$ be the maximal set of relations satisfied by $[l]$. By Lemma \ref{lemma sequences}, for any tableaux $[l]$ and $[\gamma]$, there exit $\{(k_t,i_t,j_t)\}$ $1\leq t \leq s$
such that for any $r\leq s$, $[l+\sum_{t=1}^{r}\epsilon_t\delta^{(k_t)}_{i_t,j_t}]$ satisfies  $\mathcal{C}$
and $[l+\sum_{t=1}^{s}\epsilon_t\delta^{(k_t)}_{i_t,j_t}]=[\gamma]$.
If $[l]$ and $[l+\delta^{(k)}_{ij}]$ satisfy $\mathcal{C}$,
then $l^{(k)}_{i,j}\neq \delta^{(t)}_{i+1,j'}$
for any $t,j'$. Similarly if $[l]$ and $[l-\delta^{(k)}_{ij}]$ satisfy $\mathcal{C}$, then $l^{(k)}_{i,j}\neq \delta^{(t)}_{i-1,j'}$ for any $t,j'$. Thus the coefficient of $[l+  \delta^{(k )}_{i ,j }]$ in
$e_{i }^{(p_{i+1}-p_i+1)}\ts [l]$ (resp. $[l-\delta^{(k )}_{i ,j }]$ in
$f_{i }^{(1)}\ts [l]$
is nonzero. By Lemma \ref{lemma separation},
$[l \pm\delta^{(k_1)}_{i_1,j_1}] \in V_{\mathcal{C}}([l])$. By the induction on $s$,
we conclude that $[\gamma] \in V_{\mathcal{C}}([l])$.
\end{proof}

Note that Theorem \ref{thm-irr} is a generalization of Proposition $5.3$    in \cite{FRZ}.

\subsection{Highest weight relation modules}

Denote by $q_{k}$ the number of bricks in the column $k$ of the pyramid $\pi$, $k=1, \ldots, l:=p_{n}$. We have $q_{1}\geq\cdots\geq q_{l}>0$, where $l=p_{n}$ is the number of the columns in $\pi$. Note that $N=q_1+q_2+\cdots +q_l=p_1 + \dots + p_n$, moreover, if $p_{i-1}<k\leq p_{i}$ for some $i\in\{1,\ \ldots,\ n\}$ (taking $p_{0}=0$), then $q_{k}=n-i+1$.
Let $\mathfrak{g}=\gl_N$, $\mathfrak{p}$ be the standard  parabolic subalgebra of $\mathfrak{g}$
with the Levi factor $\mathfrak{a}=\gl_{q_{1}}\oplus\cdots\oplus \gl_{q_{l}}$.
Then $W(\pi)$ is a subalgebra of $U(\mathfrak{p})$.
We will identify $U(\mathfrak{a})$ with $U(\gl_{q_{1}})\otimes\cdots\otimes U(\gl_{q_{l}})$. Let $\xi: U(\mathfrak{p})\rightarrow U(\mathfrak{a})$ be the algebra homomorphism induced by the natural projection $\mathfrak{p}\rightarrow \mathfrak{a}$. The restriction
\begin{equation*}
\bar{\xi}: W(\pi)\rightarrow U(\mathfrak{a})
\end{equation*}
of $\xi$ to $W(\pi)$ is called the Miura transform. By \cite{BK1}, Theorem 11.4, $\bar{\xi}$ is an injective algebra homomorphism, allowing us to view $W(\pi)$ as a subalgebra of $U(\mathfrak{a})$.

Let $M_k$ be a module for the Lie algebra $\gl_{q_{k}}$, $k=1, \ldots, l$. Then using  the Miura transform $\bar{\xi}$
the vector space
\begin{equation*}
M_1\otimes\ldots\otimes M_l
\end{equation*}
can be equipped with a module structure over the algebra $W(\pi)$.

For each $i=1, \ldots, n$, let $\mathcal{C}_i$ be an admissible set of relations for $\gl_{q_i}$  and $[L^{(i)}]$ be a tableau such that $\mathcal{C}_i$ is the maximal set of relations satisfied by $[L^{(i)}]$. Then $V_{\mathcal{C}_1}([L^{(1)}])\otimes \ldots \otimes V_{\mathcal{C}_l}([L^{(l)}])$ is a $\gl_{q_{1}}\oplus\ldots\oplus \gl_{q_{l}}$-module and thus a
$W(\pi)$-module.

In the following  we describe  a family of highest weight modules which can be realized as relation modules $V_{\mathcal{C}}([l])$ for some admissible sets of relations $\mathcal{C}$.

Let  $\lambda(u)=(\lambda_1(u), \ldots, \lambda_n(u))$, where $\lambda_i(u)=\prod\limits_{s=1}^{l} (u+\lambda_{s}^{(i)})$, $i=1, \ldots, n$.
We identify $\lambda_i(u)$ with the tuple $(\lambda_{1}^{(i)}, \ldots, \lambda_{l}^{(i)})$.

Denote  $[L]_{\lambda}=([l^{(1)}], \ldots, [l^{(l)}])$, where each
 $[l^{(k)}]$ is the tableau such that
$l_{ij}^{(k)}=l_{nj}^{(k)}=\lambda_j^{(k)} - j+1$, $i=1, \ldots, n$, $j=1, \ldots, i$, $k=1, \ldots, l$.

\begin{definition}
We will say that $\lambda_k(u)$ is \emph{good} if it satisfies the conditions: $\lambda_{i}^{(k)}-\lambda_j^{(k)}\notin \mathbb{Z}$ or $\lambda_{i}^{(k)}-\lambda_j^{(k)}> i-j$ for any $1\leq i<j\leq n-1$.
We say that $\lambda(u)$ is good if $\lambda_k(u)$ is good for all $k=1, \ldots, n$.
In this case  $[L]_{\lambda}$ is also called good.
\end{definition}

Assume $\lambda(u)$ is good.
For each $k=1, \ldots, l$ let
$\mathcal C_k$ be the maximal set of relations satisfied by
$[l^{(k)}]$, then $V_{C_k}([l^{(k)}])$ is the irreducible highest weight
 $\gl_{q_k}$-module with highest weight $\lambda^{(k)}=(\lambda_{n-q_k+1}^{(k)}, \cdots, \lambda_{n}^{(k)})$
 (\cite{FRZ} Proposition 5.7).

The following proposition follows from Theorem \ref{sufficiency of admissible} and Theorem \ref{thm-irr}.

\begin{proposition}\label{hw modules} Let $\lambda(u)$ be good, $[L]_{\lambda}=([l^{(1)}], \ldots, [l^{(t)}]$. Set $\mathcal C=\mathcal{C}_1\cup \ldots \cup \mathcal{C}_t$, where $\mathcal{C}_k$ is defined as above, $k=1, \ldots, t$.
If  for all  $i, j$, $r\neq s$ and $[T^{(r)}]\in V_{\mathcal{C}_r}([l^{(r)}])$, $[T^{(s)}]\in V_{\mathcal{C}_s}([l^{(s)}])$ we have $T^{(r)}_{ki}\neq T^{(s)}_{kj}$ for $k=1, \ldots, n$, then
$\mathcal{C}$ is admissible and $L(\lambda(u))\simeq V_{\mathcal{C}}([l])$.  Moreover, the explicit basis is $\mathcal{B}_{\mathcal{C}}([l])$.
\end{proposition}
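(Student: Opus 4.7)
The plan is to establish three facts in sequence---admissibility of $\mathcal{C}$, the statement that $[l]$ is a highest weight vector for $W(\pi)$ of highest weight $\lambda(u)$, and irreducibility of $V_{\mathcal{C}}([l])$---and then conclude $V_{\mathcal{C}}([l])\simeq L(\lambda(u))$ by the uniqueness of the irreducible highest weight module with a given highest weight. The basis assertion will be immediate from Definition~\ref{def-sets}.

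For admissibility, the hypothesis $T^{(r)}_{ki}\neq T^{(s)}_{kj}$ for all $[T^{(r)}]\in V_{\mathcal{C}_r}([l^{(r)}])$ and $[T^{(s)}]\in V_{\mathcal{C}_s}([l^{(s)}])$ with $r\neq s$ implies in particular that $l^{(r)}_{ki}-l^{(s)}_{kj}\notin\mathbb{Z}$, so $\mathcal{C}_1,\ldots,\mathcal{C}_l$ are pairwise disconnected in the sense of Definition~\ref{def-sets}. By the paragraph preceding the proposition, each $V_{\mathcal{C}_k}([l^{(k)}])$ is the irreducible highest weight $\gl_{q_k}$-module of highest weight $\lambda^{(k)}$ (\cite{FRZ}, Proposition~5.7), so each $\mathcal{C}_k$ is an indecomposable admissible set belonging to $\mathfrak{F}$. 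Theorem~\ref{sufficiency of admissible} then yields admissibility of $\mathcal{C}$, whose indecomposable decomposition is exactly $\mathcal{C}_1\cup\cdots\cup\mathcal{C}_l$.

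Next I would verify the highest weight conditions at $[l]$ using the Lagrange interpolation formulas \eqref{action abc}. With $l^{(k)}_{ij}=\lambda^{(k)}_j-j+1$, direct substitution gives $A_r(u)[l]=\prod_{j=1}^{r}\lambda_j(u-j+1)\,[l]$, which matches the highest weight identity through $A_r(u)=u^{p_1}(u-1)^{p_2}\cdots(u-r+1)^{p_r}a_r(u)$ and $u^{p_j}d_j(u)\xi=\lambda_j(u)\xi$. For $B_r(u)[l]$, each coefficient of $[l+\delta^{(k)}_{ri}]$ contains the factor $\prod_{j,t}(l^{(t)}_{r+1,j}-l^{(k)}_{r,i})$, whose term with $j=i$, $t=k$ is $l^{(k)}_{r+1,i}-l^{(k)}_{r,i}=0$; hence $B_r(u)[l]=0$ for all $r=1,\ldots,n-1$.

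The step that will need the most care is showing that $\mathcal{C}$ is the \emph{maximal} set of relations satisfied by $[l]$, as this is what activates Theorem~\ref{thm-irr}. Any additional integer relation at $[l]$ between entries of distinct sub-tableaux $[l^{(r)}]$ and $[l^{(s)}]$ would, via a sequence of moves furnished by Lemma~\ref{lemma sequences}, produce a tableau $[T]\in V_{\mathcal{C}}([l])$ with coincident entries $T^{(r)}_{ki}=T^{(s)}_{kj}$, contradicting the genericity hypothesis; thus no such relation exists. Theorem~\ref{thm-irr} then yields irreducibility of $V_{\mathcal{C}}([l])$, and cyclic generation by the highest weight vector $[l]$ of weight $\lambda(u)$ completes the identification with $L(\lambda(u))$.
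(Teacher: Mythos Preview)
Your proof is essentially the same approach as the paper's, which simply records that the proposition follows from Theorem~\ref{sufficiency of admissible} and Theorem~\ref{thm-irr}; you have supplied the details the paper leaves implicit (the highest weight verification at $[l]$ and the maximality argument). One small imprecision: each $\mathcal{C}_k$ need not itself be indecomposable---what you actually need, and what Theorem~\ref{sufficiency of admissible} requires, is that every indecomposable component of $\mathcal{C}_k$ lies in $\mathfrak{F}$, which follows from the admissibility of $\mathcal{C}_k$ established in \cite{FRZ}.
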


In particular, if $\lambda(u)$ is a good dominant integral highest weight, then $L(\lambda(u))$ is a finite dimensional relation module. But we should note that not every finite dimensional $W(\pi)$-module is a relation module. For instance, if $t=2$,  $\lambda^{(1)}=\lambda^{(2)}=(5,1)$, then we have some equal entries in the first row. Hence, the corresponding finite dimensional module it is not a relation module.

\section{Tensor product of highest weight relation  modules}

If the tableau $\pi$ has parameters $p_1=\ldots = p_n=p$ then $W(\pi)$ is a finitely generated Yangian of level $p$. In this section we consider certain highest weight relation  modules
for the Yangians. It will be more convenient to work with the full Yangian.

The {\it Yangian} $\Y(n)=\Y(\gl_n)$, is the complex associative algebra with the generators $t_{ij}^{(1)}, t_{ij}^{(2)}, \ldots$ where $1\leq i, j\leq n$, and the defining relations
\begin{align}\label{Yre}
[t_{ij}(u),\displaystyle \ t_{kl}(v)]=\frac{1}{u-v}(t_{kj}(u)t_{il}(v)-t_{kj}(v)t_{il}(u))
\end{align}
where
$$
t_{ij}(u)=\delta_{ij}+t_{ij}^{(1)}u^{-1}+t_{ij}^{(2)}u^{-2}+\cdots\in \mathrm{Y}(n)[[u^{-1}]]
$$
and $u$ is a formal  variable. The Yangian $\Y(n)$ is a Hopf algebra with the coproduct
$\Delta : \Y(n)\rightarrow \Y(n)\otimes \Y(n)$ defined by
\begin{align}\label{Ycp}
\Delta(t_{ij}(u))=\sum_{a=1}^{n}t_{ia}(u)\otimes t_{aj}(u)
\end{align}
Given sequences $a_{1}, \ldots, a_{r}$ and $b_{1}, \ldots, b_{r}$ of elements of $\{$1, $\ldots, n\}$ the corresponding {\it quantum minor} of the matrix $[t_{ij}(u)]$ is defined by the following equivalent
formulas:
\begin{align*}
t_{b_{1}\cdots b_{r}}^{a_{1} \cdots a_{r}}(u)= \sum_{\sigma\in \mathfrak{S}_{r}} \text{ sgn } \sigma\cdot t_{a_{\sigma(1)}b_{1}}(u)\cdots t_{a_{\sigma(r)}b_{r}}(u-r+1)\\
=\sum_{\sigma\in \mathfrak{S}_{r}} \text{ sgn }  \sigma\cdot t_{a_{1}b_{\sigma(1)}}(u-r+1)\cdots t_{a_{r}b_{\sigma(r)}}(u).
\end{align*}
The series $t_{b_{1}\cdots b_{r}}^{a\cdots a_{r}}(u)$ is skew symmetric under permutations of the indices $a_{i}$, or $b_{i}$.

\begin{proposition}[\cite{NT} Proposition 1.11]
 The images of the quantum minors under the coproduct are given by
\begin{equation}\label{cpdet}
\Delta(t_{b_{1}\cdots b_{r}}^{a_1\cdots a_{r}}(u))=
\sum_{c_{1}<\cdots<c_{r}}
t_{c_{1}\cdots c_{r}}^{a_{1}\cdots a_{r}}(u)
\otimes t_{b_{1}\cdots b_{r}}^{c_{1}\cdots c_{r}}(u)\ ,
\end{equation}
summed over all subsets of indices $\{c_{1}, \ldots,\ c_{r}\}$  from $\{1, \ldots, n\}.$
\end{proposition}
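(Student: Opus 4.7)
The plan is to apply $\Delta$ to the definition of the quantum minor and reorganize the resulting sum using the antisymmetry property stated just before the proposition.

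First I would start from the first of the two equivalent definitions,
\[
t_{b_{1}\cdots b_{r}}^{a_{1}\cdots a_{r}}(u)=\sum_{\sigma\in\mathfrak{S}_{r}}\sgn\sigma\cdot t_{a_{\sigma(1)}b_{1}}(u)\cdots t_{a_{\sigma(r)}b_{r}}(u-r+1),
\]
and apply $\Delta$ termwise, using that $\Delta$ is an algebra homomorphism together with \eqref{Ycp} applied to each factor (with the shifts $u-k+1$). Expanding the product of coproducts and grouping by intermediate summation indices $c_{1},\ldots,c_{r}\in\{1,\ldots,n\}$, I would arrive at
\[
\Delta(t_{b_{1}\cdots b_{r}}^{a_{1}\cdots a_{r}}(u))=\sum_{(c_{1},\ldots,c_{r})}\Bigl(\sum_{\sigma\in\mathfrak{S}_{r}}\sgn\sigma\cdot t_{a_{\sigma(1)}c_{1}}(u)\cdots t_{a_{\sigma(r)}c_{r}}(u-r+1)\Bigr)\otimes t_{c_{1}b_{1}}(u)\cdots t_{c_{r}b_{r}}(u-r+1).
\]
The inner sum is exactly $t_{c_{1}\cdots c_{r}}^{a_{1}\cdots a_{r}}(u)$, so the right hand side becomes $\sum_{(c_{1},\ldots,c_{r})} t_{c_{1}\cdots c_{r}}^{a_{1}\cdots a_{r}}(u)\otimes t_{c_{1}b_{1}}(u)\cdots t_{c_{r}b_{r}}(u-r+1)$.

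Next, I would invoke the skew-symmetry of the quantum minor in its \emph{lower} indices, stated in the sentence following the definition. This immediately kills all summands in which two of the $c_{i}$ coincide, so only tuples with pairwise distinct $c_{i}$ survive. Each such tuple is obtained uniquely as $(c_{\tau(1)},\ldots,c_{\tau(r)})$ for some strictly increasing $c_{1}<\cdots<c_{r}$ and some $\tau\in\mathfrak{S}_{r}$. Using the antisymmetry once more I would rewrite $t^{a_{1}\cdots a_{r}}_{c_{\tau(1)}\cdots c_{\tau(r)}}(u)=\sgn\tau\cdot t^{a_{1}\cdots a_{r}}_{c_{1}\cdots c_{r}}(u)$, pull the common factor out, and sum over $\tau$ on the second tensor leg.

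By the second equivalent form of the definition of the quantum minor, the resulting sum
\[
\sum_{\tau\in\mathfrak{S}_{r}}\sgn\tau\cdot t_{c_{\tau(1)}b_{1}}(u)\cdots t_{c_{\tau(r)}b_{r}}(u-r+1)
\]
equals $t^{c_{1}\cdots c_{r}}_{b_{1}\cdots b_{r}}(u)$, which yields exactly \eqref{cpdet}. The main obstacle is the bookkeeping in this final reorganization step: one must be careful that the spectral-parameter shifts $u-k+1$ attach themselves to the correct positions in the product on the second tensor factor, and that the sign produced by the reordering of the $c_{i}$ is absorbed into that factor rather than the first one. Once the two expressions for the quantum minor are used consistently (antisymmetry in the lower indices for the first factor, antisymmetry in the upper indices for the second), everything falls into place without further computation.
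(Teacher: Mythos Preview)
Your argument is correct and is the standard proof of this identity. Note, however, that the paper does not give its own proof: the proposition is quoted directly from \cite{NT}, Proposition~1.11, and is used as a black box. So there is nothing to compare against.

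One small slip: in your final step, the identity
\[
\sum_{\tau\in\mathfrak{S}_{r}}\sgn\tau\cdot t_{c_{\tau(1)}b_{1}}(u)\cdots t_{c_{\tau(r)}b_{r}}(u-r+1)=t^{c_{1}\cdots c_{r}}_{b_{1}\cdots b_{r}}(u)
\]
is an instance of the \emph{first} displayed definition of the quantum minor (the one permuting the upper indices), not the second. Your closing parenthetical about ``antisymmetry in the upper indices for the second factor'' is likewise imprecise: you are not using antisymmetry there, you are applying the definition directly. These are purely cosmetic issues and do not affect the validity of the argument.
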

For $m\geq 1$ introduce the series $a_{m}(u), b_m(u)$ and $c_m(u)$ by
\begin{align*}
a_{m}(u)=t_{1\cdots m}^{1\cdots m}(u),\ b_{m}(u)=t_{1\cdots m-1,m+1}^{1\cdots m}(u),\
c_{m}(u)=t^{1\cdots m-1,m+1}_{1\cdots m}(u)
\end{align*}
The coefficients of these series generate the algebra $\Y(n)$, they are called the {\it Drinfeld generators}.
\begin{definition}
Let $V$ be a $Y(n)$-module. A nonzero $v\in V$ is called singular if:

\begin{itemize}
\item[(i)] $v$ is a weight vector.
\item[(ii)] $b_{m}(u)v=0$ for any $m\geq 1$.
\end{itemize}
\end{definition}

Let $E_{ij}, i, j=1, \ldots, n$ denote the standard basis elements of the Lie algebra $\gl_n$. We have a natural
 embedding
\begin{equation*}
U(\gl_{n})\rightarrow \mathrm{Y}(n)\ ,\ E_{ij}\mapsto t_{ij}^{(1)}.
\end{equation*}

Moreover, for any $a\in \mathbb{C}$ the mapping
\begin{align}\label{ev}
\varphi_{a}:t_{ij}(u)\mapsto\delta_{ij}+\frac{E_{ij}}{u-a}
\end{align}
defines an algebra epimorphism from $\mathrm{Y}(n)$ to the universal enveloping algebra $\mathrm{U}(\gl_n)$ so that any $\gl_n$-module can be extended to a $\mathrm{Y}(n)$-module via \eqref{ev}.
 Consider the irreducible $\gl_{n}$-module $L(\lambda)$ with  highest weight $\lambda=(\lambda_{1},\ \ldots,\ \lambda_{n})$ with respect to the upper triangular Borel subalgebra generated by $E_{ij}$, $i<j$. The corresponding $\mathrm{Y}(n)$-module is denoted by $L_{a}(\lambda)$, and we call it the {\it evaluation module}. We keep the notation $L(\lambda)$ for the module $L_{a}(\lambda)$ with $a=0$. The coproduct $\Delta$ defined by \eqref{Ycp} allows one to consider the tensor products
$
L_{a_1}(\lambda^{(1)})\otimes L_{a_2}(\lambda^{(2)})\otimes\cdots\otimes L_{a_l}(\lambda^{(l)})
$
as $\mathrm{Y}(n)$-modules.

Let
 $L$ be a $\gl_n$-module   with finite dimensional
weight subspaces,

\begin{equation*}
L=\bigoplus_{\mu}L_{\mu}, \text{ dim} L_{\mu}<\infty.
\end{equation*}

Then we define the restricted dual  to $L$ by
\begin{equation*}
L^{*}=\bigoplus_{\mu}L_{\mu}^*.
\end{equation*}

The elements of $L^{*}$ are finite linear combinations of the vectors dual to the basis vectors of any weight basis of $L$. The space $L^{*}$ can be equipped with a $\gl_n$-module structure by
\begin{equation*}
(E_{ij}f)(v)=f(-E_{n-i+1,n-j+1}v)\ ,\ f\in L^{*},\ v\in L.
\end{equation*}

Denote by $\omega$ the anti-automorphism of the algebra $\mathrm{Y}(n)$ , defined by
\begin{equation*}
\omega :t_{ij}(u)\mapsto t_{n-i+1,n-j+1}(-u).
\end{equation*}

Suppose now that the $\gl_{n}$ action on $L$ is obtained by the restriction of an action of $\Y(n)$. Then the $\gl_{n}$-module structure on $L^{*}$ can be regarded as the restriction of the $\Y(n)$-module structure defined by
\begin{equation*}
(xf)(v)=f(\omega (x) v)\ ,\text{ for } x\in\Y(\gl_n) \text{ and } \ f\in L^{*},\ v\in L.
\end{equation*}

For any $\lambda=(\lambda_{1},\ \ldots\ ,\ \lambda_{n})$ we set $\tilde{\lambda}=(-\lambda_{n},\ \ldots,\ -\lambda_{1})$. Then we have

 \begin{proposition}\cite{m:yc}\label{dual module}
 Let $L$ be the tensor product
 $L(\lambda^{(1)})\otimes L(\lambda^{(2)})\otimes\cdots\otimes L(\lambda^{(l)})$.
 Then the  $\Y(\gl_n)$-module $L^{*}$  is isomorphic to the tensor product module
\begin{equation*}
L(\tilde{\lambda}^{(1)})\otimes L(\tilde{\lambda}^{(2)})\otimes\ldots\otimes L(\tilde{\lambda}^{(l)})\ .
\end{equation*}
\end{proposition}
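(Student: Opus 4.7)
The plan is to establish the isomorphism in two stages. First I would prove the single-factor case $L(\lambda)^{*}\cong L(\widetilde{\lambda})$ by exhibiting a $\Y(\gl_{n})$-singular vector of the correct weight in $L(\lambda)^{*}$, and then bootstrap to tensor products by means of a compatibility between $\omega$ and the coproduct $\Delta$.

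For the single-factor step, let $\xi\in L(\lambda)$ be a highest weight vector and let $\xi^{*}\in L(\lambda)^{*}$ be the linear functional equal to $1$ on $\xi$ and zero on every other weight vector. From $(E_{ij}f)(v)=-f(E_{n-i+1,n-j+1}v)$ one obtains $E_{ii}\xi^{*}=-\lambda_{n-i+1}\xi^{*}$, so $\xi^{*}$ has $\gl_{n}$-weight $\widetilde{\lambda}$. For $i<j$ the element $E_{n-i+1,n-j+1}$ is a lowering operator, since its weight shift $\epsilon_{n-i+1}-\epsilon_{n-j+1}$ is a negative root; hence for any weight vector $v\in L(\lambda)$ the vector $E_{n-i+1,n-j+1}v$ has weight strictly lower than that of $v$, which in turn is at most $\lambda$. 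In particular $E_{n-i+1,n-j+1}v$ cannot contribute to the top weight space, so $\xi^{*}(E_{n-i+1,n-j+1}v)=0$ for all $v$. Thus $\xi^{*}$ is $\gl_{n}$-singular of weight $\widetilde{\lambda}$, and an analogous weight-shift argument applied to the higher quantum minors $b_{m}(u)$ promotes this to $\Y(\gl_{n})$-singularity. This yields a nonzero $\Y(\gl_{n})$-homomorphism $L(\widetilde{\lambda})\to L(\lambda)^{*}$, injective by irreducibility; a comparison of weight-space characters then upgrades it to an isomorphism.

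For the tensor-product step, the decisive identity is
\begin{equation*}
\Delta\circ\omega=(\omega\otimes\omega)\circ\Delta,
\end{equation*}
verified on the generators by the substitution $a\mapsto n-a+1$ in the coproduct formula \eqref{Ycp}: indeed, both sides equal $\sum_{a}t_{n-i+1,a}(-u)\otimes t_{a,n-j+1}(-u)$. Iterating gives $\Delta^{(l-1)}\circ\omega=\omega^{\otimes l}\circ\Delta^{(l-1)}$. Since each factor $L(\lambda^{(k)})$ has finite-dimensional weight spaces, the canonical vector-space identification
\begin{equation*}
L^{*}=\bigl(L(\lambda^{(1)})\otimes\cdots\otimes L(\lambda^{(l)})\bigr)^{*}\cong L(\lambda^{(1)})^{*}\otimes\cdots\otimes L(\lambda^{(l)})^{*}
\end{equation*}
is legitimate, and the compatibility above asserts that under this identification the $\Y(\gl_{n})$-action on $L^{*}$ (defined via $\omega$ applied to the tensor-product action on $L$) agrees with the tensor product of the dual $\Y(\gl_{n})$-actions on the factors. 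Combining this with the single-factor isomorphism $L(\lambda^{(k)})^{*}\cong L(\widetilde{\lambda}^{(k)})$ delivers the statement.

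The main technical obstacle is the surjectivity in the single-factor step when $L(\lambda)$ is infinite-dimensional: the injection $L(\widetilde{\lambda})\hookrightarrow L(\lambda)^{*}$ is routine, but showing its image exhausts $L(\lambda)^{*}$ reduces to a symmetry of weight multiplicities under $\lambda\mapsto\widetilde{\lambda}=-w_{0}\lambda$. In the finite-dimensional case this is immediate from $L(\lambda)^{*}\cong L(-w_{0}\lambda)$; in general one invokes the twisted-module isomorphism produced by the Lie-algebra automorphism $E_{ij}\mapsto -E_{n-j+1,n-i+1}$, which matches the weight-space dimensions of $L(\lambda)$ with those of $L(\widetilde{\lambda})$. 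Once this character identity is in hand, the rest of the argument is a direct computation with the defining relations of $\Y(\gl_{n})$ and the explicit form of $\omega$.
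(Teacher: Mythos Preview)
The paper does not prove this proposition: it is quoted from \cite{m:yc} without argument, so there is no paper proof to compare against. Your outline is sound and is essentially the standard argument one finds in Molev's book.

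Two remarks on efficiency. First, your ``main technical obstacle'' (surjectivity of $L(\widetilde{\lambda})\hookrightarrow L(\lambda)^{*}$) dissolves once you observe that the restricted dual of an irreducible weight module with finite-dimensional weight spaces is again irreducible: submodules of $L(\lambda)^{*}$ correspond, via annihilators, to submodules of $(L(\lambda)^{*})^{*}\cong L(\lambda)$, and this bijection preserves properness. So $L(\lambda)^{*}$ is simple, and any nonzero singular vector of weight $\widetilde{\lambda}$ forces $L(\lambda)^{*}\cong L(\widetilde{\lambda})$ immediately; no character comparison is needed.

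Second, the step where you ``promote'' $\gl_{n}$-singularity of $\xi^{*}$ to $\Y(\gl_{n})$-singularity via a weight-shift argument on $b_{m}(u)$ is a little vague as written. The clean way to handle it is to note that $L(\lambda)$ is an evaluation module, so the entire $\Y(\gl_{n})$-action factors through $U(\gl_{n})$ via $\varphi_{0}$; a direct check shows that $\varphi_{0}\circ\omega$ on the dual coincides with the evaluation action for the twisted $\gl_{n}$-module structure $(E_{ij}f)(v)=-f(E_{n-i+1,n-j+1}v)$. Hence $L(\lambda)^{*}$ is itself an evaluation module, and $\gl_{n}$-singularity of $\xi^{*}$ automatically gives $\Y(\gl_{n})$-singularity. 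Your coalgebra identity $\Delta\circ\omega=(\omega\otimes\omega)\circ\Delta$ and the restricted-dual/tensor-product compatibility are both correct and make the passage to $l$ factors routine, with no reversal of the tensor order precisely because $\omega$ is a coalgebra map rather than an anti-coalgebra map.
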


 \begin{proposition}\label{piso}
 Suppose that the $\Y(n)$-module
 \begin{equation}\label{eq-piso}
L(\lambda^{(1)})\otimes L(\lambda^{(2)})\otimes\cdots\otimes L(\lambda^{(l)})
\end{equation}
 is irreducible.
 Then any permutation of the tensor factors  gives an isomorphic representation of $\Y(n)$.
\end{proposition}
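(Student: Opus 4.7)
The plan is to adapt the classical Molev argument comparing a permuted tensor product with the original via characters and highest weights. Let $V=L(\lambda^{(1)})\otimes\cdots\otimes L(\lambda^{(l)})$ and, for $\sigma\in\mathfrak S_l$, let $V^{\sigma}=L(\lambda^{(\sigma(1))})\otimes\cdots\otimes L(\lambda^{(\sigma(l))})$. Write $v^{+}=v^{+}_{1}\otimes\cdots\otimes v^{+}_{l}$ and $v^{+,\sigma}=v^{+}_{\sigma(1)}\otimes\cdots\otimes v^{+}_{\sigma(l)}$ for the tensor products of highest weight vectors.

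First I would use the coproduct formulas \eqref{Ycp} and \eqref{cpdet} to check that $v^{+}$ is a singular vector in $V$ and to compute its weight. The coproduct applied iteratively shows that $a_{m}(u)v^{+}$ is the product of the scalars by which $a_{m}(u)$ acts on each $v^{+}_{k}$; in particular each $t_{ii}(u)$ acts by $\prod_{k}\lambda^{(k)}_{i}(u)$. The expansion \eqref{cpdet} for $b_{m}(u)$ produces a sum of terms of the form $t_{c_{1}\cdots c_{m}}^{1\cdots m}(u)\otimes t_{1\cdots m-1,m+1}^{c_{1}\cdots c_{m}}(u)$; using skew-symmetry of the quantum minors and the fact that on a highest weight vector $t_{c_{1}\cdots c_{m}}^{1\cdots m}(u)$ vanishes unless $(c_{1},\ldots,c_{m})=(1,\ldots,m)$, and that in that case the surviving term involves $b_{m}(u)$ acting on a singular vector of a smaller module, an easy induction on $l$ shows $b_{m}(u)v^{+}=0$. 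The products $\prod_{k}\lambda^{(k)}_{i}(u)$ are visibly symmetric in the factors, so the same computation applied to $V^{\sigma}$ shows that $v^{+,\sigma}$ is singular in $V^{\sigma}$ with the \emph{same} highest weight $\mu(u)=(\mu_{1}(u),\ldots,\mu_{n}(u))$, where $\mu_{i}(u)=\prod_{k}\lambda^{(k)}_{i}(u)$.

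Next, since $V$ is irreducible by hypothesis and contains the singular vector $v^{+}$ of weight $\mu(u)$, it is isomorphic to the unique irreducible highest weight $\Y(n)$-module $L(\mu(u))$. On the other hand, the cyclic submodule $W:=\Y(n)\cdot v^{+,\sigma}\subseteq V^{\sigma}$ is a highest weight module of weight $\mu(u)$, so it admits a surjection $W\twoheadrightarrow L(\mu(u))\simeq V$. It remains to upgrade this surjection to an isomorphism $V^{\sigma}\simeq V$.

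The final step is a character comparison as $\gl_{n}$-modules. Under the embedding $U(\gl_{n})\hookrightarrow\Y(n)$ the coproduct \eqref{Ycp} restricts to the standard one on $\gl_{n}$, so the flip of tensor factors is a $\gl_{n}$-isomorphism $V\cong V^{\sigma}$; hence $\operatorname{ch} V=\operatorname{ch} V^{\sigma}$ as formal characters. Each $L(\lambda^{(k)})$ is a highest weight $\gl_{n}$-module with weights lying in the downward cone $\lambda^{(k)}-\sum \mathbb Z_{\geq 0}\alpha_{i}$ and finite-dimensional weight spaces, and only finitely many ways to partition a given deficit among the $l$ factors contribute to a given weight of $V$, so all weight spaces of $V$ (and $V^{\sigma}$) are finite-dimensional, making the characters well-defined. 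Then the chain of inclusions/surjections $V\twoheadleftarrow W\hookrightarrow V^{\sigma}$ forces $\operatorname{ch} V\leq\operatorname{ch} W\leq\operatorname{ch} V^{\sigma}=\operatorname{ch} V$, so equalities hold throughout; consequently $W=V^{\sigma}$ and the surjection $W\twoheadrightarrow V$ has trivial kernel, giving $V^{\sigma}\simeq V$.

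The main obstacle I expect is the singular-vector computation in step one: verifying $b_{m}(u)(v^{+}_{1}\otimes\cdots\otimes v^{+}_{l})=0$ directly from \eqref{cpdet} requires organizing the quantum-minor terms and using skew-symmetry carefully, and is the only part that is not purely formal manipulation. Everything else (highest-weight classification of irreducible quotients, and the character argument) is standard once finite-dimensionality of weight spaces is checked.
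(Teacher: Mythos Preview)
Your proposal is correct and follows essentially the same route as the paper's proof: identify the tensor of highest weight vectors in the permuted module as a singular vector of the same weight, deduce that the cyclic submodule it generates has irreducible quotient isomorphic to $L$, and then compare formal $\gl_n$-characters (which coincide under permutation of factors) to conclude the surjection is an isomorphism. The paper is simply terser, asserting the singularity of $\zeta'$ and the character equality without the detailed justifications you supply.
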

\begin{proof}
 Denote the tensor product  by $L$. Note that $L$ is a $Y(n)$-module representation with highest weight $(\lambda_{1}(u),\ \ldots,\ \lambda_{n}(u))$. Consider a representation $L'$ obtained by a certain permutation of the tensor factors in  \eqref{eq-piso}. The tensor product $\zeta'$ of the highest vectors of the representations $L(\lambda^{(i)})$ is a singular vector in $L'$ whose weight is same as the highest weight of $L$. This implies that $\zeta'$ generates a highest weight submodule in $L'$ such that its irreducible quotient is isomorphic to $L$. However, $L$ and $L'$ have the same formal character as $\gl_n$-modules  which implies that $L$ and $L'$ are isomorphic.
\end{proof}

\section{Irreducibility of tensor product}

In this section we discuss the irreducibility of tensor product of  relation highest weight modules for the Yangians.
We consider the  tensor product of  $\gl_n$-highest weight modules $L(\lambda)$'s with good $\lambda$'s.

Let $\lambda^{(i)} =(\lambda_{1}^{(i)},\ \ldots,\ \lambda_{n}^{(i)})$, $i=1, \ldots, l$ be $n$-tuples of complex numbers. We will call the set $\{\lambda^{(1)},\ \ldots,\ \lambda^{(l)}\}$ \emph{generic} if  for each pair of indices
$1\leq i<j\leq l$  we have
$\lambda_s^{(i)} - \lambda_t^{(j)}\notin \mathbb Z$, $s, t=1, \ldots, n$.

Denote by $L(\lambda^{(i)})$ the simple  $\gl_{n}$-module with highest weight $\lambda^{(i)}$, $i=1, \ldots, l$.  Also, by $\mathcal{B}(\lambda^{(i)})$ we will denote the basis of tableaux of $L(\lambda^{(i)})$ guarantied by Gelfand-Tsetlin Theorem (see \cite{GT}).
Our first result is the irreducibility of tensor product $L(\lambda^{(1)})\otimes L(\lambda^{(2)})\otimes\cdots\otimes L(\lambda^{(l)})$ in the generic case.

\begin{theorem}\label{thm-generic}
Let $\{\lambda^{(1)},\ \ldots,\ \lambda^{(l)}\}$ be a generic set with good $\lambda^{(i)}$, $i=1, \ldots, l$.
Then  the $\Y(n)$-module
$$
L(\lambda^{(1)})\otimes L(\lambda^{(2)})\otimes\cdots\otimes L(\lambda^{(l)})
$$
 is irreducible.
\end{theorem}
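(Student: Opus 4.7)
The plan is to realize the tensor product as a relation module over $W(\pi)$ for the rectangular pyramid $\pi$ with $p_1=\cdots=p_n=l$, and then invoke the irreducibility criterion of Theorem \ref{thm-irr}. Since $W(\pi)$ is a quotient of $\Y(n)$, every $W(\pi)$-module is canonically a $\Y(n)$-module with the same submodule lattice, so irreducibility over $W(\pi)$ will immediately yield irreducibility over $\Y(n)$.

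First I would fit the tensor product into the relation-module framework. For this pyramid the column heights are $q_1=\cdots=q_l=n$, so the Miura transform yields an embedding $\bar{\xi}:W(\pi)\hookrightarrow U(\gl_n)^{\otimes l}$. Since each $\lambda^{(i)}$ is good, Proposition \ref{hw modules} applied to a single factor identifies $L(\lambda^{(i)})$ with the irreducible relation module $V_{\mathcal{C}_i}([l^{(i)}])$, where $\mathcal{C}_i$ is the maximal admissible set of relations satisfied by the highest-weight tableau $[l^{(i)}]$. The Miura-transformed $W(\pi)$-action on $V_{\mathcal{C}_1}([l^{(1)}])\otimes\cdots\otimes V_{\mathcal{C}_l}([l^{(l)}])$ must then be shown to coincide, modulo the projection $\Y(n)\twoheadrightarrow W(\pi)$, with the Yangian coproduct action on $L(\lambda^{(1)})\otimes\cdots\otimes L(\lambda^{(l)})$; this uses the known compatibility between the coproduct on $\Y(n)$ and the iterated Miura transform for rectangular pyramids.

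Next I would verify that the hypothesis of Proposition \ref{hw modules} is met and identify the maximal admissible set of relations on the combined tableau $[L]=([l^{(1)}],\ldots,[l^{(l)}])$. Every entry of a tableau in $V_{\mathcal{C}_i}([l^{(i)}])$ is an integer shift of some $\lambda_s^{(i)}$; hence the generic condition $\lambda_s^{(i)}-\lambda_t^{(j)}\notin\mathbb{Z}$ for $i\neq j$ guarantees that entries of tableaux drawn from distinct factors have non-integer differences. This is precisely the hypothesis of Proposition \ref{hw modules}, so $\mathcal{C}:=\mathcal{C}_1\sqcup\cdots\sqcup\mathcal{C}_l$ is admissible and the tensor product is isomorphic to $V_{\mathcal{C}}([L])$. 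Since each $\mathcal{C}_i$ is maximal for $[l^{(i)}]$ within its own factor, and the disconnectedness forces every cross-factor potential relation to involve a non-integer difference, $\mathcal{C}$ is the maximal admissible set of relations satisfied by $[L]$.

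Finally I would apply Theorem \ref{thm-irr}: since $\mathcal{C}$ is maximal for $[L]$, the relation module $V_{\mathcal{C}}([L])$ is irreducible over $W(\pi)$, and therefore $L(\lambda^{(1)})\otimes\cdots\otimes L(\lambda^{(l)})$ is irreducible over $\Y(n)$. The main obstacle in the argument is not the relation-module bookkeeping but rather the compatibility statement in the first step: identifying the Miura-based $W(\pi)$-structure on the tensor product of $\gl_n$-modules with the coproduct-based $\Y(n)$-structure on the evaluation tensor product. Once that identification is carefully established (either by direct verification on the generators $A_r(u), B_r(u), C_r(u)$ or by appealing to known results about the Miura transform of the coproduct), the remainder of the proof reduces to the machinery developed in the preceding sections.
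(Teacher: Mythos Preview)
Your approach differs genuinely from the paper's. The paper proves Theorem~\ref{thm-generic} directly in $\Y(n)$: it argues by induction on the number $l$ of tensor factors, the base case $l=2$ being a special case of Theorem~\ref{thm-integral}. For the inductive step it writes a singular vector of a nonzero submodule of $L(\lambda^{(1)})\otimes K$ (with $K$ the remaining factors, irreducible by induction) in the Gelfand--Tsetlin basis of the first factor, shows via a quantum-minor computation (Lemma~\ref{lemma4'}) that the minimal tableau occurring must be the highest-weight tableau, then reduces to $\Y(n-1)$; a restricted-dual argument (Proposition~\ref{dual module}) shows the highest-weight vector is cyclic. No appeal to Theorem~\ref{thm-irr} or to the Miura transform is made.

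Your route via Proposition~\ref{hw modules} and Theorem~\ref{thm-irr} would make the result an almost immediate corollary of the earlier sections, which is attractive. The obstacle you flag is real, however, and is not resolved anywhere in the paper: Proposition~\ref{hw modules} equips the tensor space with a $W(\pi)$-structure through the Miura transform $\bar{\xi}$, whereas Theorem~\ref{thm-generic} concerns the $\Y(n)$-structure coming from the Hopf coproduct \eqref{Ycp} and evaluation at zero. That these two structures agree after passing to the level-$l$ quotient is a nontrivial compatibility; in the Brundan--Kleshchev framework the Miura transform matches the iterated coproduct composed with evaluations at certain column-dependent shifts, so at best one obtains directly the irreducibility of $L_{a_1}(\lambda^{(1)})\otimes\cdots\otimes L_{a_l}(\lambda^{(l)})$ for specific integers $a_k$. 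One can then absorb the shifts into the weights (genericity and goodness are invariant under integer translation of each $\lambda^{(k)}$), but this step and the underlying compatibility must be supplied explicitly. Note also that the paper runs the logic the other way: Corollary~\ref{thm-hw-irr} deduces $L(\lambda(u))\simeq L(\lambda^{(1)})\otimes\cdots\otimes L(\lambda^{(l)})$ \emph{from} Theorem~\ref{thm-generic}, so invoking that identification in your argument would be circular.
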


We will say that $\lambda(u)$
is \emph{integral} if it is not generic.

We will establish the sufficient conditions of irreducibility of the $\Y(\gl_n)$-module $L(\lambda)\otimes L(\mu)$ with good integral $\lambda$ and $\mu$.
This extends the result of \cite{M1} to  some infinite dimensional highest weight modules, though unlike in \cite{M1} we can not show the necessity of these conditions for the irreducibility of the tensor product, neither can we prove it for any number of tensor factors.

For any pair of indices $i < j$  the set $\{l_j,l_{j-1},\ldots,l_i\}$ is the union of
pairwise disjoint sets
$\{l_{i_{11}}, \ldots, l_{i_{1m_1}}\}, \{l_{i_{21}}, \ldots, l_{i_{2m_2}}\},\ldots,\{l_{i_{t1}}, \ldots, l_{i_{tm_t}}\}$
such that $l_{i_{ra}}-l_{i_{sb}}\notin \mathbb Z$ for any $r\neq s$, $l_{i_{ra}}-l_{i_{rb}}\in \mathbb Z$ and
$i_{ra}>i_{rb}$ for $a<b$.

We shall denote
\begin{align*}
\lfloor l_{i_{r1}},l_{i_{rm_r}}\rfloor^-=\left\{
\begin{array}{cc}
\{l_{i_{r1}},l_{i_{r1}}+1,\ldots,l_{i_{rm_r}}\}\setminus\{l_{i_{r1}},l_{i_{r2}},\ldots,l_{i_{rm_r}}\}, i_{rm_1}=j\\
\{l_{i_{rm_1}}+z\ |\ z\in \mathbb{Z}_{\leq 0}\} \setminus\{l_{i_{r1}},l_{i_{r2}},\ldots,l_{i_{rm_r}}\},
 i_{rm_1}\neq j
\end{array}
\right.
\end{align*}
and
\begin{align*}
\lfloor l_{i_{r1}},l_{i_{rm_r}}\rfloor^+=\left\{
\begin{array}{cc}
\{l_{i_{r1}},l_{i_{r1}}+1,\ldots,l_{i_{rm_r}}\}\setminus\{l_{i_{r1}},l_{i_{r2}},\ldots,l_{i_{rm_r}}\}, i_{rm_1}=i\\
\{l_{i_{rm_r}}+z\ |\ z\in \mathbb{Z}_{\geq 0}\} \setminus\{l_{i_{r1}},l_{i_{r2}},\ldots,l_{i_{rm_r}}\},
 i_{rm_r}\neq i
\end{array}
\right.
\end{align*}

\begin{align*}
\begin{array}{cc}
\langle l_j,l_i\rangle ^-=\cup_{r=1}^{t} \lfloor l_{i_{r1}},l_{i_{rm_r}}\rfloor^-\\
\langle l_j,l_i\rangle^+=\cup_{r=1}^{t} \lfloor l_{i_{r1}},l_{i_{rm_r}}\rfloor^+
\end{array}
\end{align*}

Let $\lambda=(\lambda_1, \ldots, \lambda_n)$  and $\mu=(\mu_1, \ldots, \mu_n)$ are $n$-tuples of complex numbers. Consider irreducible highest weight $\gl_{n}$-modules $L(\lambda)$ and $L(\mu)$ with highest weights $\lambda$ and
$\mu$ respectively.

Set
$$
l_i=\lambda_i- i+1,\  m_i = \mu_i- i+1,\  i=1,\cdots, n.
$$

\begin{theorem}\label{thm-integral}
Let $\lambda$ and $\mu$ be good integral $\gl_n$-highest weights.
 Suppose that for each pair of indices
$1\leq i < j\leq n$  we have
\begin{align}\label{condition}
m_{j}\notin\langle l_{j},  l_{i}\rangle^-,
m_{i}\notin\langle l_{j},  l_{i}\rangle^+
 \text{ or } l_{j} \notin\langle m_{j},  m_{i}\rangle^-, l_{i} \notin\langle m_{j},  m_{i}\rangle^+.
\end{align}
Then the $\Y(\gl_n)$-module $L(\lambda)\otimes L(\mu)$  is irreducible.

\end{theorem}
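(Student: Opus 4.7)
The plan is to adapt the strategy of Molev \cite{M1} for finite dimensional modules to the infinite dimensional highest weight setting, combining a cyclicity argument with the duality of Proposition \ref{dual module}. Since both $\lambda$ and $\mu$ are good, the modules $L(\lambda)$ and $L(\mu)$ are relation modules of the form $V_{\mathcal{C}}([l])$ by Proposition \ref{hw modules}, so the Gelfand--Tsetlin basis and the explicit action formulas (\ref{action abc}), (\ref{action of generators}) are available on each tensor factor throughout.

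First, under the first alternative in (\ref{condition}), that $m_j\notin \langle l_j,l_i\rangle^-$ and $m_i\notin \langle l_j,l_i\rangle^+$ for all pairs $i<j$, I would prove that the highest weight vector $\xi_\lambda\otimes\xi_\mu$ generates $L(\lambda)\otimes L(\mu)$ as a $\Y(\gl_n)$-module. Using the coproduct (\ref{Ycp}) and the explicit Gelfand--Tsetlin action on each factor, this reduces to exhibiting, for any basis vector $[l']\otimes[m']$, a sequence of applications of the lowering operators $f_r^{(s)}$ whose image has a nonzero $[l']\otimes[m']$-component. The sets $\langle l_j,l_i\rangle^\pm$ are designed precisely to prevent vanishing of the Lagrange-interpolation coefficients in (\ref{action bc}) along such a sequence: these coefficients involve products of differences $l_{r+1,j}^{(t)}-l_{r,i}^{(k)}$ that would otherwise cancel at the forbidden integral values, and ruling these cancellations out is exactly what the hypothesis encodes.

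Next, by Proposition \ref{dual module} the restricted dual of $L(\lambda)\otimes L(\mu)$ is $L(\tilde\mu)\otimes L(\tilde\lambda)$, and the second alternative in (\ref{condition}) is transformed under the involution $\lambda\mapsto\tilde\lambda$ into the first alternative applied to the pair $(\tilde\mu,\tilde\lambda)$. Hence, whichever alternative of (\ref{condition}) holds, cyclicity from the highest weight vector is obtained for either $L(\lambda)\otimes L(\mu)$ or its restricted dual. Combined with the standard fact that a highest weight module whose restricted dual is also cyclically generated by its highest weight vector must be irreducible (any proper submodule would, on one side or the other, exclude the generating highest weight vector), this will give the theorem.

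The main obstacle is the cyclicity step itself: unlike in Molev's finite dimensional argument, one cannot invoke a character or dimension count to close the loop. The bookkeeping of which rational coefficients can vanish as tableau entries move along an integral trajectory must be carried out directly from the explicit formulas. I would organize this by induction on the weight of $[l']\otimes[m']$ below $\xi_\lambda\otimes\xi_\mu$, applying $f_r^{(s)}$ for appropriately chosen $r,s$ and using the separation Lemma \ref{lemma separation} (whose noncriticality hypothesis holds since the maximal admissible sets for $[l]$ and $[m]$ are noncritical) to extract the target tableau pair from the resulting linear combination. The sets $\langle l_j,l_i\rangle^\pm$ then control precisely which intermediate tableaux can occur with a nonzero coefficient, and the hypothesis ensures the chain can be completed at every step.
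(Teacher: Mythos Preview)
Your overall architecture---cyclicity of the highest weight vector together with duality---is the same as the paper's, but you distribute the work between the two halves differently, and your version has a genuine gap.

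The hypothesis \eqref{condition} is an ``or'' \emph{for each pair} $(i,j)$ separately: for some pairs the first alternative may hold and for others the second. Your plan treats it as a global dichotomy, assuming the first alternative holds uniformly for all pairs (and then, dually, the second for all pairs). With mixed alternatives your direct cyclicity-by-lowering argument has no mechanism to succeed: the obstruction to reaching a given tableau pair $[l']\otimes[m']$ is governed by some specific pair $(i,j)$, and if for that pair only the second alternative holds, the Lagrange coefficient you need in \eqref{action bc} may well vanish. Moreover, even in the uniform case your last step claims that cyclicity of the module \emph{or} of its dual yields irreducibility; in fact one needs both, and you have only produced one of them from a single alternative.

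The paper avoids this by proving a different statement first: every singular vector of $L(\lambda)\otimes L(\mu)$ lies in $\mathbb{C}\cdot\xi\otimes\xi'$. This is done by induction on $n$. Writing a singular vector as $\zeta=\sum_{[L]}[L]\otimes m_L$ in the Gelfand--Tsetlin basis of the first factor, one takes a tableau $[L^0]$ of minimal $\gl_n$-weight in the support and shows (Lemma~\ref{lemma4}) that its $(n-1)$-st row coincides with the top row. That lemma is the technical heart: one applies the polynomial quantum minor $T^{1\cdots n-r}_{1\cdots n-r-1,n}(u)$ to $\zeta$ via the coproduct formula \eqref{cpdet}, isolates the coefficient of a carefully chosen $[L']\otimes m_{L^0}$, and obtains an identity of the form $a(u+m_n+r)+b(u+l^0_{n-r,i_r}+1)=0$ with $a\neq 0$, forcing $m_n=l^0_{n-1,i}$ and hence $m_n\in\langle l_n,l_1\rangle^-$. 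Only the first alternative for the \emph{single} pair involving the index $n$ is used here (after possibly permuting the two factors via Proposition~\ref{piso}); all other pairs are absorbed into the inductive hypothesis applied to $L(\lambda_-)\otimes L(\mu_-)$ over $\Y(n-1)$. Cyclicity of $\xi\otimes\xi'$ is then deduced \emph{afterwards} from this singular-vector uniqueness applied to the restricted dual (Proposition~\ref{dual module}): if the cyclic submodule $N$ generated by $\xi\otimes\xi'$ were proper, its annihilator in $L^*$ would be a nonzero submodule and hence contain $\xi^*\otimes\xi'^{*}$, which is impossible.

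A second, smaller issue: your appeal to Lemma~\ref{lemma separation} on the tensor product is not justified. That lemma relies on $A_r(u)$ acting diagonally on the tableau basis, which holds inside a single relation module. On $L(\lambda)\otimes L(\mu)$ the coproduct \eqref{cpdet} for $t^{1\cdots m}_{1\cdots m}(u)$ has off-diagonal summands, so $[L]\otimes[M]$ is in general not an eigenvector for the Gelfand--Tsetlin subalgebra of $\Y(n)$, and you cannot separate tableau pairs by eigenvalues. This is exactly why the paper isolates coefficients by weight-minimality of $[L^0]$ rather than by diagonalization.
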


In the following we prove Theorem \ref{thm-generic} and Theorem \ref{thm-integral}. The proofs closely follow the  proof of Theorem 3.1 in \cite{M1}  for finite dimensional modules.
We include the details for completeness.

\subsection{Integral case}

 We start with the proof of Theorem \ref{thm-integral}. Assume that $L(\lambda)\otimes L(\mu)$ is not irreducible as  $\mathrm{Y}(n)$-module.
Let $\xi$ and $\xi'$ denote the highest weight vectors of the $\gl_{n}$-modules $L(\lambda)$ and $L(\mu)$, respectively.
Consider a nonzero $\mathrm{Y}(n)$-submodule $N$ of $L(\lambda)\otimes L(\mu)$. Then  $N$ must contain a nonzero singular  vector $\zeta$.
We will show by induction on $n$ that
$
\zeta\in \mathbb{C}\cdot \xi\otimes\xi'.$ Since $\lambda$ is  good then $L(\lambda)$ is a relation  $\gl_{n}$-module by \cite{FRZ}, Proposition 5.7.
We denote by $H$ the Cartan subalgebra  of $\gl_n$ consisting of diagonal matrices. We identify an element  $w\in \mathfrak{h}^*$ with the $n$-tuple consisting of values of $w$ on the standard
basis of $\mathfrak{h}$.

Consider the Gelfand-Tsetlin basis $\mathcal{B}(\lambda)$ of  $L(\lambda)$.  The tableau corresponding to the element $\xi$ is of the form $[L]=(l_{ij})$
with $l_{ij}=\lambda_j-j+1$ for all $i=1, \ldots, n$, $j=1, \ldots, i$.


The element
 $\zeta$ can be  written uniquely as a finite sum:
\begin{align}\label{singular vector}
\zeta=\sum_{[L]\in\mathcal{B}(\lambda)}[L]\otimes m_{L},
\end{align}
where $m_{L}\in L(\mu)$.

Viewing $L(\lambda)\otimes L(\mu)$ as a $\gl_{n}$-module we immediately see that $\zeta$ is  a weight $\gl_{n}$-singular vector, that is $E_{ij}\zeta=0$ for all $i<j$.
 Moreover, all elements $[L]\otimes m_{L}$ in \eqref{singular vector} have the same $\gl_{n}$-weight.

If $[L]=(l_{ij})$ then the weight $w(L)$ of $[L]$ is a sequence $$\left\{\sum_{i=1}^{k}l_{ki}-\sum_{i=1}^{k-1}l_{k-1,i}+k-1, \,\ \  k=1, \ldots, n\right\}.$$
 Given two weights $w, w'\in \mathfrak{h}^{*}$, we shall write $w\preceq w'$ if $w'-w$ is a $\mathrm{Z}_{\geq 0}$-linear combination of the simple roots of
$\gl_{n}$. This defines a partial order on the set of weights of $\gl_{n}$.

Denote by $supp \, \zeta$ the set of tableaux $[L]\in\mathcal{B}(\lambda)$ for which $m_L\neq 0$ in (\ref{singular vector}).
Let $[L^{0}]$ be a minimal element  in  $supp \, \zeta$  with respect to the partial ordering on the weights $w(L)$'s .

Since
$t_{1\ldots m-1, m+1}^{1\cdots m}(u) \zeta=0,$ we have

\begin{align}\label{bmzeta}
\sum_{c_{1}<\cdots<c_{m}}\sum_{L}t_{c_{1}\ldots c_{m}}^{1\cdots m}(u) [L]
\otimes t_{1\ldots m-1, m+1}^{1\cdots m}(u) m_{L}=
\end{align}

$$=t_{1\ldots m}^{1\cdots m}(u)[L^0]\otimes t_{1\ldots m-1, m+1}^{1\cdots m}(u) m_{L^0} + \ldots =0. $$
Hence, $t_{1\ldots m-1, m+1}^{1\cdots m}(u) m_{L^0}=0$ for all $m$.  Thus $m_{L^0}$ is a highest vector of $L(\mu)$ and we conclude
that $m_{L^{0}}$ is a scalar multiple of $\xi'$. This immediately implies
 that  $L^{0}$  is determined uniquely.  For any
 $L\in supp \, \zeta$ we have $w(L)\succeq w(L^{0})$.  If $[L]=(l_{ij})\in supp \, \xi$ and  $[L^{0}]=(l^0_{ij})$ then
 we have $l_{ij} - l_{ij}^{0}\in \mathrm{Z}_{\geq 0}$ for $1\leq j\leq i\leq n-1.$


 Permuting $L(\lambda)$ and $L(\mu) $ if necessary and applying Proposition \ref{piso}, we  assume that $m_{n}\notin\langle l_{n},  l_{1}\rangle^-,
m_{1}\notin\langle l_{n},  l_{1}\rangle^+$.





\begin{lemma}\label{lemma4}
The $(n-1)$-th row of  $[L^{0}]$ is $(l^0_{1},\ \ldots,\ l^0_{n-1})$, where $l^0_i=\lambda_i-i+1$.
\end{lemma}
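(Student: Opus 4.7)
The plan is to exploit the singular condition $b_{n-1}(u)\zeta=0$ together with the coproduct formula \eqref{cpdet}. Expanding,
\[
b_{n-1}(u)\zeta=\sum_{L}\sum_{k=1}^{n} t^{1\cdots n-1}_{1\cdots \widehat{k}\cdots n}(u)[L]\otimes t^{1\cdots \widehat{k}\cdots n}_{1\cdots n-2,n}(u)m_L=0,
\]
where $\widehat{k}$ denotes omission from $\{1,\ldots,n\}$. I would then project this identity onto the $\xi'$-component in the second tensor factor and analyze which pairs $(L,k)$ contribute. For $L=L^0$: since $m_{L^0}=c\xi'$ and $\xi'$ is the highest vector of $L(\mu)$, the index $k=n$ contributes nothing (as $b_{n-1}(u)\xi'=0$), and the indices $k<n-1$ contribute nothing (as $t^{1\cdots \widehat{k}\cdots n}_{1\cdots n-2,n}(u)$ has $\gl_n$-weight $e_{n-1}-e_k$ which raises $\xi'$ above the highest weight $\mu$). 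Only $k=n-1$ survives, producing $c\,\nu_0(u)\,b_{n-1}(u)[L^0]$, where $\nu_0(u)$ is the polynomial eigenvalue by which $t^{1\cdots n-2,n}_{1\cdots n-2,n}(u)$ acts on $\xi'$.

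For $L\neq L^0$, the weight-matching requirement $w(L)=w(L^0)+\mathrm{wt}(Y_{k})$, combined with the minimality $w(L)\succeq w(L^0)$, restricts the contributing tableaux to specific shifts of $L^0$ (such as $L^0+\delta_{n-1,j}$ for the $k=n$ term). Extracting the coefficient of $[L^0+\delta_{n-1,i}]\otimes\xi'$ in the projected identity and using the Gelfand-Tsetlin action \eqref{bractba} recast for Yangian quantum minors, the leading contribution from $c\,\nu_0(u)\,b_{n-1}(u)[L^0]$ involves the product $\prod_{j}(\lambda_{j}-j+1-l^0_{n-1,i})$ divided by nonzero difference factors $(l^0_{n-1,k}-l^0_{n-1,i})$, whose non-vanishing is guaranteed by the good hypothesis on $\lambda$. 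After a Lagrange-interpolation style cancellation gathering the contributions from $L=L^0+\delta_{n-1,j}$ via the diagonal pair $a_{n-1}(u)\otimes b_{n-1}(u)$, the resulting identity forces this product to vanish, yielding $l^0_{n-1,i}=\lambda_{j}-j+1$ for some $j$; the standardness constraints on $[L^0]$ then pin down $j=i$.

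The main obstacle will be the precise bookkeeping of all $L\neq L^0$ contributions to the coefficient of $[L^0+\delta_{n-1,i}]\otimes\xi'$, in particular the pairs $(L,k)$ with $k<n-1$ in which $L$ differs from $L^0$ in rows below $n-1$. One must verify that these terms either vanish or recombine so that, modulo the denominators controlled by the good assumption on $\lambda$, the surviving polynomial identity in $u$ admits only the stated solution $l^0_{n-1,i}=\lambda_i-i+1$. A secondary technical difficulty is tracking the explicit eigenvalues $\nu_0(u)$ and the Gelfand-Tsetlin coefficients through the quantum-minor coproduct expansion while keeping the highest-weight condition $m_{L^0}\in\mathbb{C}\,\xi'$ in play at every stage.
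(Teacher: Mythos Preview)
Your plan has two genuine gaps.

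First, you use only $b_{n-1}(u)\zeta=0$, but the target tableau $[L^0+\delta_{n-1,i}]$ need not belong to $\mathcal{B}(\lambda)$: increasing $l^0_{n-1,i}$ by $1$ may violate a constraint coming from row $n-2$. The paper handles this by choosing, for a $j$ with $l^0_{n-1,j}\neq l^0_j$, the minimal $r$ for which a cascaded tableau $[L']=[L^0+\delta_{n-1,j_1}+\cdots+\delta_{n-r,j_r}]$ (with $j_1=j$) \emph{is} valid, and then invoking the vanishing of the longer minor $t^{1\cdots n-r}_{1\cdots n-r-1,n}(u)\zeta$ instead of $b_{n-1}(u)\zeta$. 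The minimality of $r$ guarantees $E_{in}[L^0]=0$ for $n-r<i\leq n-1$, so that only one term survives when extracting the $[L']$-component from $T^{1\cdots n-r}_{1\cdots n-r-1,n}(u)[L^0]$.

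Second, your claimed conclusion is wrong: you assert the identity forces $\prod_j(\lambda_j-j+1-l^0_{n-1,i})=0$, a condition on $\lambda$ alone. But the lemma is false without the running hypothesis $m_n\notin\langle l_n,l_1\rangle^-$ on $\mu$, which your argument never uses. Carrying out your two contributions (already in the case $r=1$) one finds, after dividing by the common polynomial $(u+m_1)\cdots(u+m_{n-2})\prod_{k\neq i}(u+l^0_{n-1,k})$, the identity $a(u+m_n+1)+b(u+l^0_{n-1,i}+1)=0$: the factor $(u+m_n+1)$ comes from your $\nu_0(u)$ (the eigenvalue of $T^{1\cdots n-2,n}_{1\cdots n-2,n}(u)$ on $\xi'$), while $(u+l^0_{n-1,i}+1)$ comes from $T^{1\cdots n-1}_{1\cdots n-1}(u)$ acting on $[L']$. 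This forces $b=-a$ and $m_n=l^0_{n-1,i}$, a relation involving $\mu$, which then contradicts $m_n\notin\langle l_n,l_1\rangle^-$. The product $\prod_j(\lambda_j-j+1-l^0_{n-1,i})$ you single out sits inside the constant $a$ and is not forced to vanish; on the contrary, the argument needs it to be \emph{nonzero}. (Incidentally, your weight reasoning for $k<n-1$ is backwards: that operator lowers rather than raises; those terms actually drop out because the required $[L]$ would have weight strictly below $w(L^0)$, contradicting minimality.)
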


\begin{proof}
Suppose the contrary. Then for each $j$ with $l^{0}_{n-1, j}\neq l_{j}^{0}$ there exists a minimal $r(j)$ such that $[L'(r(j))]=[L^{0}+ \delta_{n-1, j_1}+\cdots+ \delta_{n-r(j), j_{r(j)}}]$ is a Gelfand-Tsetlin tableau of $L(\lambda)$ with $j_1=j$. Choose $j$ such that $r(j)$ is minimal and denote it by $r$. Also set $L'=L'(r)$.

Since $\zeta$ is a singular vector, we have
\begin{align*}
t_{1,\cdots,n-r-1,n}^{1,\cdots,n-r}(u)\zeta=0
\end{align*}
and hence, by \eqref{cpdet} we have

\begin{align}\label{bmzet}
\sum_{c_{1}<\cdots<c_{n-r}}\sum_{L}t_{c_{1}\ldots c_{n-r}}^{1\cdots n-r}(u) [L]
\otimes t_{1\cdots n-r-1,n}^{c_{1}\cdots c_{n-r}}(u) m_{L}=0.
\end{align}

Following the proof of Lemma 3.5 in \cite{M1} we look at the coefficient of
 $[L']\otimes m_{L^{0}}$ in the expansion of the left hand side. It comes from the following
 two summands in
\eqref{bmzet}:

\begin{align}\label{e1}
t_{1\ldots n-r-1,n}^{1\cdots n-r}(u)[L^{0}]
\otimes t_{1\cdots n-r-1,n}^{1\ldots n-r-1,n}(u)m_{L^0}
\end{align}
and
\begin{align}\label{e2}
t_{1\ldots n-r}^{1\cdots n-r}(u)[L']
\otimes t_{1\cdots n-r-1,n}^{1\ldots n-r}(u)m_{L'},
\end{align}

 if $[L']\in supp \, \zeta$.



Consider \eqref{e1} first.
Due to  the minimality of $r$ we have $E_{in}[L^{0}]=0$ for $n-r<i \leq n-1$. Hence
\begin{align*}
E_{n-r,n}[L^{0}]=(-1)^{r-1}E_{n-1,n}E_{n-2,n-1}\cdots E_{n-r,n-r+1}[L^{0}].
\end{align*}
Therefore the expansion of $E_{n-r,n}[L^{0}]$  contains a term $a[L']$ with $a\neq 0$.


It will  be convenient to use polynomial quantum minors  defined by:
\begin{align*}
T_{i_{1}\cdots i_{m}}^{j_1\cdots  j_{m}}(u)=u(u-1)\cdots(u-m+1)
t_{i_{1}\cdots i_{m}}^{j_1\cdots  j_{m}}(u).
\end{align*}
Then
the coefficient of $[L']$ in
$T_{1\ldots n-r-1,n}^{1\cdots n-r}(u)[L^{0}]$ equals
\begin{align*}
a(u+l_{n-r,1}^{0})\cdots\bigwedge_{i_r}\cdots(u+l_{n-r,n-r}^{0})\ .
\end{align*}

On the other hand,

\begin{align*}
T_{1\cdots n-r-1,n}^{1\ldots n-r-1,n}(u)m_{L^0}
=(u+m_{1})\cdots(u+m_{n-r-1})(u+m_{n}+r)m_{L^0}.
\end{align*}

Hence,

\begin{align}\label{e11}
T_{1\ldots n-r-1,n}^{1\cdots n-r}(u)[L^{0}]
\otimes T_{1\cdots n-r-1,n}^{1\ldots n-r-1,n}(u)m_{L^0}=
\end{align}

$$a(u+l_{n-r,1}^{0})\cdots\bigwedge_{i_r}\cdots(u+l_{n-r,n-r}^{0}) (u+m_{1})\cdots(u+m_{n-r-1})(u+m_{n}+r)([L']\otimes m_{L^0}).$$

Consider now \eqref{e2}. We have
\begin{align*}
T_{1\cdots n-r}^{1\cdots n-r}(u)[L']
=(u+l_{n-r,1}^{0})\cdots(u+l_{n-r,i_r}^{0}+1)\cdots(u+l_{n-r,n-r}^{0})[L'].
\end{align*}


Let $[L]_{\mu}$ be the highest weight tableau of $L(\mu)$ in the Gelfand-Tsetlin realization of $L(\mu)$. Then $m_{L^0}$ is a multiple of $[L]_{\mu}$.
Comparing the weights of $[L^0]\otimes m_{L^0}$ and $[L']\otimes m_{L'}$
we see that
$m_{L'}$ is a multiple of the tableau
 $[L]_{\mu, r}= [L]_{\mu} - \delta_{n-1, j_1} - \cdots  - \delta_{n-r, j_r} $. Since $(n-r, j)\geq(n-r-1, j)$ for $j=1,\ldots, n-r-1$ and the $(n-r-1)$-th row of each patter is $(\mu_1,\ .\ .\ .\ ,\ \mu_{n-r-1})$, we have that
$j_r=n-r$ and the $(n-r)$-th row of $[L]_{\mu, r}$ is $(m_1,\ .\ .\ .\ ,\ m_{n-r-1},\ m_{n-r}-1)$.

Therefore $E_{n-r, n}m_{L'}$ is a scalar multiple of $m_{L^0}$.
If $[L']$ is not in $supp \, \zeta$ then $m_{L'}=0$.
In both cases we have that $E_{n-r, n}m_{L'}= b m_{L^0}$ for some constant $b$, and so

\begin{align*}
T_{1\ldots n-r-1,n}^{1\cdots n-r}(u) m_{L'}
=b \cdot (u+m_{1})\cdots(u+m_{n-r-1}) m_{L^0}.
\end{align*}

We have

\begin{align*}
T_{1\cdots n-r}^{1\cdots n-r}(u)[L']\otimes T_{1\ldots n-r-1,n}^{1\cdots n-r}(u) m_{L'}
=
\end{align*}

$$b \cdot (u+m_{1})\cdots(u+m_{n-r-1})(u+l_{n-r,1}^{0})\cdots(u+l_{n-r,i_r}^{0}+1)\cdots(u+l_{n-r,n-r}^{0})([L']\otimes m_{L'}).$$

Combining these results   we obtain

$$
a(u+m_{n}+r)+ b \cdot (u+l_{n-r,i_r}^{0}+1)=0.
$$

In particular, we have $b=-a\neq 0$ and
 $m_{n}=l_{n-r,i_r}^{0}-r+1$. By the minimality of $r$ we have $l_{n-s,i_s}^0=l_{n-s+1,i_{s-1}}^0+1$ and $m_{n}=l_{n-1,i}^{0}$.

By the definition of $[L^{0}]$      we have
$l_{i}-l_{n-1,i}^{0}\geq 0$ and $l_{n-1,i}^{0}-l_{k}\geq 0$, where
$k$ is the minimal index such that $k>i$ and
$\lambda_i-\lambda_{k}\in \mathbb Z_{\geq 0}$. This implies
 $l_{i}-m_{n}\in Z_{> 0}$ and $m_{n}-l_{k}\in Z_{> 0}$.
Thus  $m_n\in \langle l_{n},  l_{1}\rangle^-$, which is a contradiction.
This completes the proof of the lemma.
\end{proof}

Lemma \ref{lemma4}  implies that all tableaux $[L]\in supp \, \zeta$ belong to the $\gl_{n-1}$-submodule $L(\lambda_{-})$ of $L(\lambda)$ generated by  $\xi$.
Note that he module $L(\lambda_{-})$ is irreducible  with the highest weight $\lambda_{-}=(\lambda_{1},\ \ldots,\ \lambda_{n-1})$ by \cite{FRZ}, Proposition 5.3.
We have
 $E_{nn}[L]=\lambda_{n}[L]$ for all $[L]\in supp \, \zeta$. Moreover, $$
w(L)+w(m_L)=w(L^{0})+\mu.
$$
Hence, $E_{nn}m_L=\mu_{n}m_L$ and   the $(n-1)$-th row of each tableau $m_L$ coincides with $(\mu_{1},\ \ldots,\ \mu_{n-1})$. We see that each $m_L$  belongs to the $\gl_{n-1}$-submodule $L(\mu-)$ generated by $\xi'$, which is irreducible highest weight module with the highest weight $\mu_{-}= (\mu_1,\ldots,\ \mu_{n-1})$. Therefore, $\zeta\in L(\lambda_{-})\otimes L(\mu_{-})$.

The $\mathrm{Y}(n-1)$-module structure on
$L(\lambda_{-})\otimes L(\mu_{-})$ coincides the with the one obtained by restriction from $\mathrm{Y}(n)$ to the subalgebra generated by the $t_{ij}(u)$ with $1\leq i, j\leq n-1$ by \eqref{Ycp} and  \eqref{Yre}.
The vector $\zeta$ is singular for $\mathrm{Y}(n-1)$ (it is annihilated by  $b_{1}(u), \ldots, b_{n-2}(u)$). By the assumption of the theorem, for each pair $(i,\ j)$ such that $1\leq i<j\leq n-1$ the condition \eqref{condition} is satisfied. Therefore $L(\lambda_{-})\otimes L(\mu_{-})$ is irreducible $\mathrm{Y}(n-1)$-module by the induction hypothesis.
Hence, $\zeta$ is a scalar multiple of $\xi\otimes\xi'$.

It remains to show that  $L(\lambda)\otimes L(\mu)$ is generated by  $\xi\otimes\xi'$.
Suppose that  $\xi\otimes\xi'$ generates a proper submodule $N$ in $\mathcal L=L(\lambda)\otimes L(\mu)$.  Denote
\begin{align*}
\tilde{N}=\{f\in L^{*}\ |\ f(v)=0 \text{ for all } v\in N\}.
\end{align*}
Then $\tilde{N}$ is a nonzero (since $N\neq\mathcal L$) submodule of $\mathcal L^{*}$. By Proposition \ref{dual module}
and above argument, $\tilde{N}$
 contains a singular vector $\zeta.$  As it was shown above $\zeta$ is a scalar multiple of  $\xi^*\otimes \xi'^{*}$ of the highest weight vectors of  $L(\lambda)^*$ and $ L(\mu)^*$
 respectively. On the other hand, $\xi^*\otimes \xi'^{*}\notin \tilde{N}$ giving a contradiction.  Hence, $\xi\otimes\xi'$ generates $L(\lambda)\otimes L(\mu)$. Since all singular elements of the highest weight $\Y(n)$-module
 $L(\lambda)\otimes L(\mu)$ belong to $\mathbb C \cdot \xi\otimes\xi'$, the module $L(\lambda)\otimes L(\mu)$ is irreducible.
This completes the proof of Theorem \ref{thm-integral}.

\subsection{Generic highest weight modules}

Now we prove  Theorem \ref{thm-generic} by induction on $l$.  The case $l=2$ is a consequence of Theorem \ref{thm-integral}, since the conditions of Theorem \ref{thm-integral} trivially
follow from the conditions of Theorem \ref{thm-generic}.

We assume now that $l > 2$ and denote by $K$  the tensor product $L(\lambda^{(2)})\otimes\cdots\otimes L(\lambda^{(l)})$.
Suppose that $K$ is irreducible highest weight  $\Y(n)$-module. We will show that $\mathcal L=L(\lambda^{(1)})\otimes K$ is irreducible. Then Theorem \ref{thm-generic} follows by induction.

The proof of irreducibility of $\mathcal L$  is similar to the proof of Theorem \ref{thm-integral}.
Suppose   $N$ is a nonzero $\mathrm{Y}(n)$-submodule of $\mathcal L$.
Then $N$ must contain a singular vector $\zeta$:

\begin{align} \label{generic singular}
\zeta=\sum_L [L]\otimes m_L,
\end{align}
summed over finitely many Gelfand-Tsetlin tableaux $[L]$ of $L(\lambda^{(1)})$, where $m_L\in K$.

Following the proof of Theorem \ref{thm-integral} we choose a minimal element $[L^{0}]$ of  the set of tableaux $[L]$ occurring in \eqref{generic singular}
 with respect to the partial ordering on the weights $w(\Lambda)$. As before $[L^{0}]$  is determined uniquely,  $m_{L^{0}}$ is a scalar multiple of $\xi'$ and for any $[L]$ that
 occurs in \eqref{generic singular}  $w(L)\succeq w(L^{0})$. Moreover,
 for each entry $l_{ij}$ of  $[L]$  occurring in \eqref{generic singular} we have $l_{ij}- l_{ij}^{0}\in \mathrm{Z}_{\geq 0}$, for $1\leq j\leq i\leq n-1.$

We also have an analog of Lemma \ref{lemma4}

\begin{lemma}\label{lemma4'}
The $(n-1)$-th row of  $[L^{0}]$ is $(l^0_{1},\ \ldots,\ l^0_{n-1})$, where $l^0_i=\lambda_i-i+1$.
\end{lemma}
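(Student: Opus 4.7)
The proof will follow the strategy of Lemma \ref{lemma4}, replacing the role of the second tensor factor $L(\mu)$ by the $\Y(n)$-module $K = L(\lambda^{(2)}) \otimes \cdots \otimes L(\lambda^{(l)})$ and using the genericity hypothesis in place of the integrality condition \eqref{condition}. The plan is to argue by contradiction: suppose that $l^{0}_{n-1,j} \neq l^{0}_{j}$ for some $j$. For each such index pick the minimal $r(j)$ so that
\[
[L'(r(j))] = [L^{0} + \delta_{n-1,j_1} + \cdots + \delta_{n-r(j),j_{r(j)}}]
\]
is a valid Gelfand--Tsetlin tableau of $L(\lambda^{(1)})$ with $j_1 = j$, and let $r$ be the minimal such $r(j)$, with $[L'] = [L'(r)]$.

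Next I would apply the singular-vector equation $t^{1\cdots n-r}_{1\cdots n-r-1,n}(u)\zeta = 0$ and expand using the coproduct formula \eqref{cpdet}. As in the proof of Lemma \ref{lemma4}, by the minimality of $r$ and the standard properties of how $\gl_{n}$ root vectors act on Gelfand--Tsetlin tableaux, precisely two terms of the expansion contribute to the coefficient of $[L']\otimes m_{L^{0}}$, namely
\[
t^{1\cdots n-r}_{1\cdots n-r-1,n}(u)[L^{0}]\otimes t^{1\cdots n-r-1,n}_{1\cdots n-r-1,n}(u)\,m_{L^{0}} \quad\text{and}\quad t^{1\cdots n-r}_{1\cdots n-r}(u)[L']\otimes t^{1\cdots n-r}_{1\cdots n-r-1,n}(u)\,m_{L'}.
\]
The first tensor slots are computed exactly as in Lemma \ref{lemma4}, since the $\gl_n$-action on $L(\lambda^{(1)})$ is unchanged. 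For the second tensor slots I would use two facts: (a) by the same reasoning as in the preceding paragraphs of Section 6, $m_{L^{0}}$ is a $\Y(n)$-singular vector in $K$, and since $K$ is irreducible by the inductive hypothesis, $m_{L^{0}}$ is a scalar multiple of $\xi' = \xi^{(2)}\otimes\cdots\otimes \xi^{(l)}$; and (b) the weight equality $w(L)+w(m_L)=w(L^{0})+w(\xi')$ forces $m_{L'}$ to lie in the weight space such that $E_{n-r,n}\,m_{L'} = b\, m_{L^{0}}$ for some scalar $b$.

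The main computational step is to evaluate $T^{1\cdots n-r-1,n}_{1\cdots n-r-1,n}(u)$ on $\xi'$, which by iterated coproducts and the evaluation homomorphism \eqref{ev} gives the diagonal eigenvalue
\[
\prod_{k=2}^{l}(u+\lambda_{1}^{(k)})(u+\lambda_{2}^{(k)}-1)\cdots(u+\lambda_{n-r-1}^{(k)}-n+r+2)(u+\lambda_{n}^{(k)}-n+r+1),
\]
together with the analogous (shorter) product for $T^{1\cdots n-r}_{1\cdots n-r-1,n}(u)$ on $m_{L'}$. Setting the combined coefficient of $[L']\otimes m_{L^{0}}$ to zero and cancelling common polynomial factors in $u$ yields a polynomial identity of the form
\[
a\prod_{k=2}^{l}\bigl(u+\lambda_{n}^{(k)}-n+r+1\bigr) + b\bigl(u+l^{0}_{n-r,i_r}+1\bigr)\cdot(\text{factor independent of the }\lambda^{(k)})=0,
\]
where $a, b \neq 0$. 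Matching roots forces $\lambda_{n}^{(k)} - \lambda^{(1)}_{s} \in \mathbb{Z}$ for some $k\geq 2$ and some $s$ determined by $l^{0}_{n-r,i_r}$. This contradicts the genericity hypothesis $\lambda_{s}^{(i)}-\lambda_{t}^{(j)}\notin\mathbb{Z}$ for $i\neq j$, completing the proof.

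The main obstacle I anticipate is the bookkeeping for the quantum-minor action on the tensor product $\xi'$ and the careful identification of the two contributing terms in the coproduct expansion; once these explicit polynomial expressions are in hand, the contradiction with genericity is immediate, which is cleaner than the analogous step in Lemma \ref{lemma4}.
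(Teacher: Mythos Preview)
Your approach is essentially the same as the paper's: set up the contradiction via the minimal $r$, apply $T^{1\cdots n-r}_{1\cdots n-r-1,n}(u)$ to $\zeta$, expand by the coproduct, and read off the coefficient of $[L']\otimes m_{L^{0}}$. The paper then substitutes $u=-l^{0}_{n-r,i_r}-1$ to force one of the factors $u+m_n^{(k)}+r$ to vanish, which is exactly your ``matching roots'' step, and the contradiction with genericity follows.

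There is one imprecision worth flagging. In the two-factor case (Lemma~\ref{lemma4}) the vector $m_{L'}$ is a single Gelfand--Tsetlin tableau in $L(\mu)$, so $T^{1\cdots n-r}_{1\cdots n-r-1,n}(u)\,m_{L'}$ factors neatly as a constant $b$ times a product of linear terms. In the present multi-factor situation $m_{L'}$ lies in a weight space of $K=L(\lambda^{(2)})\otimes\cdots\otimes L(\lambda^{(l)})$ that is generally not one-dimensional, so you cannot assert that this action produces ``the analogous (shorter) product'' with a scalar $b$. The paper accordingly writes the second contribution simply as $g(u)\,(u+l^{0}_{n-r,i_r}+1)$ for some polynomial $g(u)$, without claiming any further factorisation. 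Your assertion that $b\neq 0$ is likewise unjustified---and unnecessary: since $a\neq 0$, plugging in $u=-l^{0}_{n-r,i_r}-1$ kills the second term regardless of $g(u)$, and the first term then forces $m_n^{(k)}=l^{0}_{n-r,i_r}-r+1=l^{0}_{n-1,i}$ for some $k\geq 2$, contradicting genericity. With this adjustment your argument matches the paper's.
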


\begin{proof}
Choose $[L']$ as in the proof of Lemma \ref{lemma4}.
Since $\zeta$ is a singular vector, we have
$$
0=T_{1,\cdots,n-r-1,n}^{1,\cdots,n-r}(u)\zeta=$$
$$
=\sum_{c_{1}<\cdots<c_{n-r}}\sum_{L}T_{c_{1}\ldots c_{n-r}}^{1\cdots n-r}(u)[L]
\otimes T_{1\cdots n-r-1,n}^{c_{1}\cdots c_{n-r}}(u)m_L.
$$

The coefficient of  $[L']\otimes m_{L^{0}}$ in the expansion of the left hand side of \eqref{singular vector} is the following

$$
(u+l_{n-r,1}^{0})\cdots\bigwedge_{i_r}\cdots(u+l_{n-r,n-r}^{0})
\prod_{i=2}^k (u+m_{1}^{(i)})\cdots(u+m_{n-r-1}^{(i)}) \times$$

$$ (a\prod_{i=2}^k (u+m_{n}^{(i)}+r) +g(u) (u+l_{n-r,i_r}^{0}+1))=0,$$
where $a\neq 0$ and $g(u)$ is a certain polynomial in $u$.




Put $u=-l_{n-r,i_r}^{0}-1 $. Since $a$ is nonzero, we get
$m_n^{(j)}=l_{n-r,i_r}^{0}-r+1=l_{n-1,i}^{0}$ for some $2\leq j \leq k$.
Thus $\lambda_n^{(j)}-\lambda_i^{(1)}\in \mathbb Z$  which is a contradiction. The lemma is proved.
\end{proof}

It remains to show that $\xi\otimes\xi'$  generates $\mathcal L$. The argument is the same as in the proof of Theorem \ref{thm-integral}.
This completes the proof of Theorem \ref{thm-generic}.

Let  $\lambda(u)=(\lambda_1(u), \ldots, \lambda_n(u))$, where $\lambda_i(u)=\prod\limits_{s=1}^l (u+\lambda_{s}^{(i)})$, $i=1, \ldots, n$. Set $\lambda^{(i)}=(\lambda_1^{(i)}, \ldots, \lambda_l^{(i)})$,
$i=1, \ldots, n$.

\begin{corollary}\label{thm-hw-irr} If $\lambda(u)$ is as in  Theorem \ref{thm-generic} then
$$L(\lambda(u))\simeq L(\lambda^{(1)})\otimes \cdots \otimes L(\lambda^{(n)}).$$
\end{corollary}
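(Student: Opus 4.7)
My plan is to identify the tensor product of evaluation modules on the right hand side with the irreducible highest weight module $L(\lambda(u))$ by (i) computing its $\Y(n)$-highest weight, (ii) invoking Theorem \ref{thm-generic} to get irreducibility, and (iii) appealing to uniqueness of irreducible highest weight modules.

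First I would recall that via the evaluation homomorphism $\varphi_0$ in \eqref{ev}, the irreducible $\gl_n$-module $L(\lambda^{(s)})$ with highest weight $\lambda^{(s)}=(\lambda^{(s)}_1,\dots,\lambda^{(s)}_n)$ becomes a $\Y(n)$-module whose highest weight vector $\xi^{(s)}$ satisfies $t_{ii}(u)\,\xi^{(s)}=(1+\lambda^{(s)}_i/u)\,\xi^{(s)}$, so that the $i$-th monic polynomial component of its Yangian highest weight is $u+\lambda^{(s)}_i$. The coproduct formula \eqref{Ycp} implies that on a tensor product of highest weight modules the $i$-th component of the Yangian highest weight is the product of the $i$-th components of the factors. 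Applied to $\xi^{(1)}\otimes\cdots\otimes\xi^{(l)}$, this gives the $i$-th component
\[
\prod_{s=1}^{l}(u+\lambda^{(s)}_i)=\lambda_i(u),
\]
so the Yangian highest weight of $L(\lambda^{(1)})\otimes\cdots\otimes L(\lambda^{(l)})$ is exactly $\lambda(u)$.

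Next, by the genericity assumption on $\lambda(u)$ taken from Theorem \ref{thm-generic}, the hypotheses of that theorem are satisfied for the collection $\{\lambda^{(1)},\dots,\lambda^{(l)}\}$, so $L(\lambda^{(1)})\otimes\cdots\otimes L(\lambda^{(l)})$ is an irreducible $\Y(n)$-module. Being irreducible, highest weight, and having Yangian highest weight $\lambda(u)$, it must be isomorphic to $L(\lambda(u))$.

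The only real checking is bookkeeping: one must verify that the generic condition across the tuples $\lambda^{(s)}$ and the goodness of each $\lambda^{(s)}$ are precisely those used in Theorem \ref{thm-generic}, and one must match the convention in which the monic highest-weight polynomials of $L(\lambda(u))$ factor as claimed. I expect the main subtlety to be only notational, since the irreducibility -- the nontrivial input -- is already furnished by Theorem \ref{thm-generic}.
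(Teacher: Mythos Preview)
Your proposal is correct and follows essentially the same approach as the paper: invoke Theorem~\ref{thm-generic} for irreducibility of the tensor product, observe that the tensor of highest weight vectors is a Yangian highest weight vector of weight $\lambda(u)$, and conclude by uniqueness of irreducible highest weight modules. Your version is simply more explicit about the coproduct computation yielding $\prod_s(u+\lambda_i^{(s)})=\lambda_i(u)$, which the paper leaves implicit.
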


\begin{proof}
By  Theorem \ref{thm-generic} the tensor product  $V_{\mathcal{C}^{(1)}}([L_1])
\otimes\cdots\otimes V_{\mathcal{C}^{(l)}}([L_l]) $ is irreducible. Moreover, this  $\Y(n)$-module  contains a highest weight vector with weight $\lambda(u)$ which implies the statement.
\end{proof}

\begin{remark}
We can combine Theorem \ref{thm-generic}
 and Theorem \ref{thm-integral} and obtain irreducibility of the tensor product
 $$L(\lambda)\otimes L(\mu)\otimes L(\nu_1)\otimes \ldots \otimes L(\nu_s),$$
 where $\lambda$ and $\mu$ satisfy the conditions of Theorem \ref{thm-integral} and $\nu_1, \ldots, \nu_s$ satisfy the conditions of Theorem \ref{thm-generic} and
 $\nu_i^{j}-\lambda_k\notin \mathbb Z$,  $\nu_i^{j}-\mu_k\notin \mathbb Z$ for all possible $i, j, k$.

 \end{remark}

\section{Proof of Theorem \ref{sufficiency of admissible}}

Let $\mathcal{C}$ be a pre-admissible set of relations  and $[L]$  a tableau satisfying $\mathcal{C}$. Assume that $\mathcal{C}$ is a union of indecomposable sets from $\mathfrak{F}$.  We will show that for any defining relation $g=0$ in $W(\pi)$ and $[l]\in\mathcal{B}_{\mathcal{C}}([L])$ holds
$g[l]=0$.
Recall that the action of generators of $W(\pi)$ in $V_{\mathcal C}([L])$ is given by
\eqref{action of generators}, where
$
d_{r}^{(t)}[l]=d_r^{\ts(t)}(l)[l]$ and the action of $d_{r}^{\tss\prime\ts(t)}$ on $[l]$ is a multiplication by a scalar which is polynomial in $l$. Also recall that  the vector
 $[l\pm\de_{ri}^{(k)}]$ is zero if it does not satisfy $\mathcal{C}$.

Set

\begin{equation*}
e_{r,k,i}^{(t)}(l)=\left\{
\begin{array}{cc}
- \frac{\prod\limits_{j,t}(l_{r+1,j}^{(t)}-l_{r,i}^{(k)})}{\prod\limits_{(j,t)\neq (i,k)}(l_{r,j}^{(t)}-l_{r,i}^{(k)})}
b_{r,k,i}^{(t)}(L),&
\text{ if } [l]\in  \mathcal {B}_\mathcal{C}([L])
\\
0,& \text{ if }[l]\notin  \mathcal {B} _\mathcal{C} ([L]),
\end{array}
\right.
\end{equation*}

\begin{equation*}
f_{r,k,i}^{(t)}(l)=\left\{
\begin{array}{cc}
\frac{\prod\limits_{j,t}(l_{r-1,j}^{(t)}-l_{r,i}^{(k)})}{\prod\limits_{(j,t)\neq (i,k)}(l_{r,j}^{(t)}-l_{r,i}^{(k)})}
c_{r,k,i}^{(t)}(l),&
\text{ if } [l]\in  \mathcal {B}_\mathcal{C}([L])
\\
0,& \text{ if }[l]\notin  \mathcal {B} _\mathcal{C} ([L]),
\end{array}
\right.
\end{equation*}

\begin{equation*}
\Phi(l,z_1,\ldots,z_m)=
\left\{
\begin{array}{cc}
1,& \text{ if } [l+z_1+\ldots+z_t]\in  \mathcal{B}_{\mathcal{C}}([L]) \text{ for any } 1\leq t \leq m \\
0,& \text{ otherwise}.
\end{array}
\right.
\end{equation*}

Note that
$e_{r,k,i}^{(t)}(l)$ and $f_{r,k,i}^{(t)}(l)$ are rational functions in the components of $[l]$ and

$e_{r,k,i}^{(p_{r+1}-p_r+1)}(l)=- \frac{\prod\limits_{j,t}(l_{r+1,j}^{(t)}-l_{r,i}^{(k)})}{\prod\limits_{(j,t)\neq (i,k)}(l_{r,j}^{(t)}-l_{r,i}^{(k)})}$,
$f_{r,k,i}^{(1)}(l)=\frac{\prod\limits_{j,t}(l_{r-1,j}^{(t)}-l_{r,i}^{(k)})}{\prod\limits_{(j,t)\neq (i,k)}(l_{r,j}^{(t)}-l_{r,i}^{(k)})}$.

Now
the action of generators can be written as follows:
 \begin{align}\label{gt formula}
d_{r}^{(t)}[l]&=d_r^{\ts(t)}(l)[l],\\
e_r^{(t)}\ts [l] &=\sum_{i,k}\Phi(l,\de_{ri}^{(k)}) e_{r,k,i}^{(t)}(l)  \ts [l+\de_{ri}^{(k)}] ,\\
f_r^{(t)}\ts [l] &=
\sum_{i,k}\Phi(l,-\de_{ri}^{(k)})f_{r,k,i}^{(t)}(l) \ts [l-\de_{ri}^{(k)}].
 \end{align}

We proceed with the verification of defining relations.


{\bf 1.}
\begin{equation*}
[d_{i}^{\ts(r)},d_{j}^{\ts(s)}][l]=0.
\end{equation*}
The  statement is obvious.

\

{\bf 2.}
\begin{align}\label{relation ef}
[e_{i}^{(r)},f_{j}^{(s)}][l]&=-\ts \de_{ij}\ts\sum_{t=0}^{r+s-1}
d_{i}^{\tss\prime\ts(t)}\ts d_{i+1}^{\ts(r+s-t-1)}[l].
\end{align}

\

The tableaux that appear in the equation \eqref{relation ef} are of  the form $[l+\delta^{(k_1)}_{i,u_1}-\delta^{(k_2)}_{j,u_2}]$.
Assume $[l+\delta^{(k_1)}_{i,u_1}-\delta^{(k_2)}_{j,u_2}]\in \mathcal{B}_{\mathcal{C}}([l])$ and $|i-j|>1$. Under these conditions  $[l+\delta^{(k_1)}_{i,u_1}],[l -\delta^{(k_2)}_{j,u_2}]\in \mathcal{B}_{\mathcal{C}}([l])$.
Let $[v]$ be a tableau with $\mathbb Z$-independent entries.
Then we have $[e_{i}^{(r)},f_{j}^{(s)}][v]=0$.
Therefore the coefficient of $[l+\delta^{(k_1)}_{i,u_1}-\delta^{(k_2)}_{j,u_2}]$ on both sides of \eqref{relation ef} is equal.

Suppose now that  $|i-j|=1$ and there is no relation between $(k_1,i,u_1)$ and $(k_2,j,u_2)$.
Similarly to the case $|i-j|>1$, let $[v]$ be a tableau with $\mathbb Z$-independent entries.
 By comparing the  coefficients of $[l+\delta^{(k_1)}_{i,u_1}-\delta^{(k_2)}_{j,u_2}]$ and  $[v+\delta^{(k_1)}_{i,u_1}-\delta^{(k_2)}_{j,u_2}]$  we conclude that
 the coefficient of $[l+\delta^{(k_1)}_{i,u_1}-\delta^{(k_2)}_{j,u_2}]$ on both sides of \eqref{relation ef}  is equal.

Suppose $|i-j|=1$ and there is a relation between $(k_1,i,u_1)$ and $(k_2,j,u_2)$.
We denote by $\mathcal{C}'$  the set that consists of this relation.
Let $[v]$ be a tableau such that $v_{i,u_m}^{(k_m)}=l_{i,u_m}^{(k_m)}$, $m=1,2$ and all other entries are $\mathbb Z$-independent. By Example \ref{example admissible} $V_{\mathcal{C}'}([v])$ is a $W(\pi)$-module.
Thus $[e_{i}^{(r)},f_{j}^{(s)}][v]=0$.
Since $[l+z]\in  \mathcal{B}_{\mathcal{C}}([l])$ if and only if $[v+z]\in  \mathcal{B}_{\mathcal{C}'}([v])$
where $z=\delta^{(k_1)}_{i,u_1},-\delta^{(k_2)}_{j,u_2}$ or $  \delta^{(k_1)}_{i,u_1}-\delta^{(k_2)}_{j,u_2}$.
Therefore the coefficient of $[l+\delta^{(k_1)}_{i,u_1}-\delta^{(k_2)}_{j,u_2}]$ on both sides of \eqref{relation ef} is equal.

Suppose $i=j$ and $(k_1,u_1)\neq(k_2,u_2)$. Then there is no relation between
$(k_1,i,u_1)$ and $(k_2,i,u_2)$. Similarly to the case $|i-j|>1$, we prove that
the coefficient of $[l+\delta^{(k_1)}_{i,u_1}-\delta^{(k_2)}_{j,u_2}]$ on both sides of \eqref{relation ef} are equal.

Suppose $i=j$ and $(k_1,u_1)=(k_2,u_2)=(k,u)$.
Let $[v]$ be a tableau with $\mathbb Z$-independent entries.
Then $[e_{i}^{(r)},f_{j}^{(s)}][v] =-\ts \de_{ij}\ts\sum_{t=0}^{r+s-1}
d_{i}^{\tss\prime\ts(t)}\ts d_{i+1}^{\ts(r+s-t-1)}[v]$.

The coefficient of $[l]$ on the left hand side is as follows:
\begin{align*}
\sum_{k,u}  e_{i,k,u}^{(r)}(v-\de_{iu}^{(k)})f_{i,k,u}^{(s)}(v)
-\sum_{k,u}  f_{i,k,u}^{(s)}(v+\de_{iu}^{(k)})e_{i,k,u}^{(r)}(v).
\end{align*}
We denote the coefficient of $[v]$ on the right hand side by $h(v)$.

Since $\Phi(l,-\de_{iu}^{(k)},\de_{iu}^{(k)})=\Phi(l,-\de_{iu}^{(k)})$ and
$\Phi(l, \de_{iu}^{(k)},-\de_{iu}^{(k)})=\Phi(l, \de_{iu}^{(k)})$,
the coefficient of $[l]$ in $[e_{i}^{(r)},f_{j}^{(s)}][l]$ is
\begin{align*}
\sum_{k,u}  \Phi(l,-\de_{iu}^{(k)} )e_{i,k,u}^{(r)}(l-\de_{iu}^{(k)})f_{i,k,u}^{(s)}(l)
-\sum_{k,u} \Phi(l,\de_{iu}^{(k)} ) f_{i,k,u}^{(s)}(l+\de_{iu}^{(k)})e_{i,k,u}^{(r)}(l).
\end{align*}

The coefficient of $[l]$ in $-\ts \de_{ij}\ts\sum_{t=0}^{r+s-1}
d_{i}^{\tss\prime\ts(t)}\ts d_{i+1}^{\ts(r+s-t-1)}[l]$ is $h(l)$. We have

\begin{equation*}
\begin{split}
\sum_{k,u}  \Phi(l,-\de_{iu}^{(k)} )e_{i,k,u}^{(r)}(l-\de_{iu}^{(k)})f_{i,k,u}^{(s)}(l)
-\sum_{k,u} \Phi(l,\de_{iu}^{(k)} ) f_{i,k,u}^{(s)}(l+\de_{iu}^{(k)})e_{i,k,u}^{(r)}(l)\\
=\sum_{k,u, \Phi(l,-\de_{iu}^{(k)})=1 }e_{i,k,u}^{(r)}(l-\de_{iu}^{(k)})f_{i,k,u}^{(s)}(l)
-\sum_{k,u, \Phi(l,\de_{iu}^{(k)} )=1} f_{i,k,u}^{(s)}(l+\de_{iu}^{(k)})e_{i,k,u}^{(r)}(l)\\
=\lim_{v\rightarrow l}\left(\sum_{k,u, \Phi(l,-\de_{iu}^{(k)})=1 }e_{i,k,u}^{(r)}(v-\de_{iu}^{(k)})f_{i,k,u}^{(s)}(v)
-\sum_{k,u, \Phi(l,\de_{iu}^{(k)} )=1} f_{i,k,u}^{(s)}(v+\de_{iu}^{(k)})e_{i,k,u}^{(r)}(v)\right).
\end{split}
\end{equation*}

In order to show

\begin{align*}
\sum_{k,u}  \Phi(l,-\de_{iu}^{(k)} )e_{i,k,u}^{(r)}(l-\de_{iu}^{(k)})f_{i,k,u}^{(s)}(l)
-\sum_{k,u} \Phi(l,\de_{iu}^{(k)} ) f_{i,k,u}^{(s)}(l+\de_{iu}^{(k)})e_{i,k,u}^{(r)}(l)=h(l)
\end{align*}
it is sufficient to prove that
\begin{align}\label{eq-lim}
\lim_{v\rightarrow l}\left(\sum_{k,u, \Phi(l,-\de_{iu}^{(k)})=0 }e_{i,k,u}^{(r)}(v-\de_{iu}^{(k)})f_{i,k,u}^{(s)}(v)
-\sum_{k,u, \Phi(l,\de_{iu}^{(k)} )=0} f_{i,k,u}^{(s)}(v+\de_{iu}^{(k)})e_{i,k,u}^{(r)}(v)\right)=0.
\end{align}

computing we have

\begin{align*}
\lim_{v\rightarrow l}\left(\sum_{k,u, \Phi(l,-\de_{iu}^{(k)})\neq 0 }e_{i,k,u}^{(r)}(v-\de_{iu}^{(k)})f_{i,k,u}^{(s)}(v)
-\sum_{k,u, \Phi(l,\de_{iu}^{(k)} )\neq 0} f_{i,k,u}^{(s)}(v+\de_{iu}^{(k)})e_{i,k,u}^{(r)}(v)\right) \\
=\sum_{k,u}  \Phi(l,-\de_{iu}^{(k)} )e_{i,k,u}^{(r)}(l-\de_{iu}^{(k)})f_{i,k,u}^{(s)}(l)
-\sum_{k,u} \Phi(l,\de_{iu}^{(k)} ) f_{i,k,u}^{(s)}(l+\de_{iu}^{(k)})e_{i,k,u}^{(r)}(l),
\end{align*}

\begin{align*}
\lim_{v\rightarrow l}\left(\sum_{k,u }e_{i,k,u}^{(r)}(v-\de_{iu}^{(k)})f_{i,k,u}^{(s)}(v)
-\sum_{k,u } f_{i,k,u}^{(s)}(v+\de_{iu}^{(k)})e_{i,k,u}^{(r)}(v)\right)  \\
=\sum_{k,u}  \Phi(l,-\de_{iu}^{(k)} )e_{i,k,u}^{(r)}(l-\de_{iu}^{(k)})f_{i,k,u}^{(s)}(l)
-\sum_{k,u} \Phi(l,\de_{iu}^{(k)} ) f_{i,k,u}^{(s)}(l+\de_{iu}^{(k)})e_{i,k,u}^{(r)}(l).
\end{align*}

On the other hand,
\begin{align*}
 \left(\sum_{k,u }e_{i,k,u}^{(r)}(v-\de_{iu}^{(k)})f_{i,k,u}^{(s)}(v)
-\sum_{k,u } f_{i,k,u}^{(s)}(v+\de_{iu}^{(k)})e_{i,k,u}^{(r)}(v)\right)
=h(v)
\end{align*}
where $h(v)$ is a polynomial in $v$. Then the limit is $h(l)$.

Thus we have
\begin{align*}
\sum_{k,u}  \Phi(l,-\de_{iu}^{(k)} )e_{i,k,u}^{(r)}(l-\de_{iu}^{(k)})f_{i,k,u}^{(s)}(l)
-\sum_{k,u} \Phi(l,\de_{iu}^{(k)} ) f_{i,k,u}^{(s)}(l+\de_{iu}^{(k)})e_{i,k,u}^{(r)}(l)=h(l).
\end{align*}

The following statements can be verified by direct computation:
\begin{itemize}
\item[(i)] If $\Phi(l,-\de_{iu}^{(k)})=0$  and
$l_{iu}^{k}-l_{iu'}^{k'}\neq 1$ for any $(k',u')\neq(k,u)$, then
$$\lim\limits_{v\rightarrow l}e_{i,k,u}^{(r)}(v-\de_{iu}^{(k)})f_{i,k,u}^{(s)}(v)=0.$$
\item[(ii)] If $\Phi(l,\de_{iu}^{(k)} )=0$ and
$l_{iu'}^{k'}-l_{iu}^{k}\neq 1$ for any $(k',u')\neq(k,u)$,
 then $$\lim\limits_{v\rightarrow l}f_{i,k,u}^{(s)}(v+\de_{iu}^{(k)})e_{i,k,u}^{(r)}(v)=0.$$
\item[(iii)] If $l_{iu}^{k}-l_{iu'}^{k'}=1$,
then $\Phi(l,-\de_{iu}^{(k)})= \Phi(l,\de_{iu'}^{(k')} )=0 $ and

$$\lim\limits_{v\rightarrow l}
\left(
e_{i,k,u}^{(r)}(v-\de_{iu}^{(k)})f_{i,k,u}^{(s)}(v)-f_{i,k',u'}^{(s)}(v+\de_{iu}^{(k)})e_{i,k',u'}^{(r)}(v)
\right)
=0.$$
\end{itemize}

Therefore \eqref{eq-lim} holds
and
we complete the proof.

\

{\bf 3.}
\begin{align}\label{relation de}
[d_{i}^{\ts(r)},e_{j}^{(s)}][l]&=(\de_{ij}-\de_{i,j+1})\ts\sum_{t=0}^{r-1}
d_{i}^{\ts(t)}\ts e_{j}^{(r+s-t-1)}[l], \\ \label{relation df}
[d_{i}^{\ts(r)},f_{j}^{(s)}][l]&=
(\de_{i,j+1}-\de_{ij})\ts\sum_{t=0}^{r-1}
f_{j}^{(r+s-t-1)}\ts d_{i}^{\ts(t)}[l].
\end{align}

\

To prove that for every $[l+\delta^{(k)}_{j,t}]\in \mathcal{B}_{\mathcal{C}}([l])$,
the coefficients on both sides of \eqref{relation de} are equal,  consider a tableau $[v]$ with $\mathbb Z$-independent entries. For $[v]$ the coefficients on both sides of \eqref{relation de} are equal. Taking the limit $v\rightarrow l$ we obtain the statement.

The Relation (\ref{relation df}) can be proved by the same argument.



\



{\bf 4.} \label{lemm ee}

\begin{align}\label{relation ee}
[e_{i}^{(r)},e_{i}^{(s+1)}][l]-[e_{i}^{(r+1)},e_{i}^{(s)}][l]&=
e_{i}^{(r)}e_{i}^{(s)}[l]+e_{i}^{(s)}e_{i}^{(r)}[l],
\\  \label{relation ff}
[f_{i}^{(r+1)},f_{i}^{(s)}][l]-[f_{i}^{(r)},f_{i}^{(s+1)}][l]&=
f_{i}^{(r)} f_{i}^{(s)}[l]+f_{i}^{(s)} f_{i}^{(r)}[l].
\end{align}

\

The tableaux which appear in the Equation \eqref{relation ee} are of the form $[l+2\delta^{(k)}_{i,s}]$ and
$[l+ \delta^{(k)}_{i,s}+\delta^{(r)}_{i,t}]$, $(k,s)\neq(r,t)$.
In the following we show the for any such tableau in  $\mathcal{B}_{\mathcal{C}}([l])$
the coefficients on both sides of \eqref{relation ee} are equal.
It easy to see that when $[l+2\delta^{(k)}_{i,s}]\in \mathcal{B}_{\mathcal{C}}([l])$ then
$[l+\delta^{(k)}_{i,s}]\in \mathcal{B}_{\mathcal{C}}([l])$.
Hence the corresponding value of $\Phi$ is $1$ and the  coefficients on both sides are equal.
Similarly, if
$[l+ \delta^{(k)}_{i,s}+\delta^{(r)}_{i,t}]\in \mathcal{B}_{\mathcal{C}}([l])$ then
$[l+ \delta^{(k)}_{i,s}] ,[l+  \delta^{(r)}_{i,t}]\in \mathcal{B}_{\mathcal{C}}([l])$.
Thus the coefficients of $[l+ \delta^{(k)}_{i,s}+\delta^{(r)}_{i,t}]$, $(k,s)\neq(r,t)$
on both sides of \eqref{relation ee} are equal.

Consider a tableau $[v]$ with $\mathbb Z$-independent entries. For $[v]$ the coefficients on both sides of \eqref{relation ee} are equal.
Taking the limit $v\rightarrow l$ we obtain Equation \eqref{relation ee}.

The Relation (\ref{relation ff}) can be proved using the same arguments.

\

{\bf 5.}
\begin{align}\label{relation ee1}
[e_{i}^{(r)},e_{i+1}^{(s+1)}][l]-[e_{i}^{(r+1)},e_{i+1}^{(s)}][l]&=
-e_{i}^{(r)}e_{i+1}^{(s)}[l],\\ \label{relation ff1}
[f_{i}^{(r+1)},f_{i+1}^{(s)}][l]-[f_{i}^{(r)},f_{i+1}^{(s+1)}][l]&=
-f_{i+1}^{(s)}f_{i}^{(r)}[l].
\end{align}

\

The tableaux which appear in Equation \eqref{relation ee1} are  of the form
$[l+ \delta^{(k)}_{i,s}+\delta^{(r)}_{i+1,t}]$.
Let $[l+ \delta^{(k)}_{i,s}+\delta^{(r)}_{i+1,t}]\in \mathcal{B}_{\mathcal{C}}([l])$.
If there is no relation between $(k,i,s)$ and $(r,i+1,t)$,
then $[l+ \delta^{(k)}_{i,s} ], [l +\delta^{(r)}_{i+1,t}]\in \mathcal{B}_{\mathcal{C}}([l])$.
By the argument on the proof of ({\bf  \ref{lemm ee}}) we have the same coefficients of $[l+ \delta^{(k)}_{i,s}+\delta^{(r)}_{i+1,t}]$ on both sides of \eqref{relation ee1}.

Assume $\mathcal{C}'=\{(r,i+1,t)\geq(k,i,s)\}\subset \mathcal{C}$. It is admissible by Example \ref{example admissible}.
Let $[v]$ be a tableau such that $v_{i+1,t}^{(r)}=l_{i+1,t}^{(r)}$, $v_{i,s}^{(k)}=l_{i,s}^{(k)}$ and all other entries are $\mathbb Z$-independent.
Then $V_{\mathcal{C}'}([v])$ is a $W(\pi)$-module and
$
[e_{i}^{(r)},e_{i+1}^{(s+1)}][v]-[e_{i}^{(r+1)},e_{i+1}^{(s)}][v]=
-e_{i}^{(r)}e_{i+1}^{(s)}[v]
$.
Since $[l+ z] \in \mathcal{B}_{\mathcal{C}}([l])$ if and only if $[v+ z] \in \mathcal{B}_{\mathcal{C}'}([v])$ for
$z=\delta^{(k)}_{i,s},\delta^{(r)}_{i+1,t}$,
by substituting $l$ for $v$ in the
coefficients of $[v+ \delta^{(k)}_{i,s}+\delta^{(r)}_{i+1,t}]$  we obtain
the coefficients of $[l+ \delta^{(k)}_{i,s}+\delta^{(r)}_{i+1,t}]$. Therefore
the coefficient of $[l+ \delta^{(k)}_{i,s}+\delta^{(r)}_{i+1,t}]$ on both sides of \eqref{relation ee1} are equal.

Similarly one treats the case when  $\{(k,i,s)>(r,i+1,t)\}\subset \mathcal{C}$.
This completes the proof of  \eqref{relation ee1}.
The equality \eqref{relation ff1} can be proved by the same argument.

\

{\bf 6.}
\begin{align*}
[e_{i}^{(r)},e_{j}^{(s)}][l]&=0,\qquad&&\text{if}\quad |i-j|>1,\\
[f_{i}^{(r)},f_{j}^{(s)}][l]&=0,\qquad&&\text{if}\quad |i-j|>1.
\end{align*}

\


The proof is analogous to the proof of Relations (\ref{relation ee1}) and (\ref{relation ff1}).

\

{\bf 7.}
\begin{align}\label{relation eee}
[e_{i}^{(r)},[e_{i}^{(s)},e_{j}^{(t)}]][l]
&+[e_{i}^{(s)},[e_{i}^{(r)},e_{j}^{(t)}]][l]=0,
\qquad&&\text{if}\quad |i-j|=1, \\
[f_{i}^{(r)},[f_{i}^{(s)},f_{j}^{(t)}]][l]
&+[f_{i}^{(s)},[f_{i}^{(r)},f_{j}^{(t)}]][l]=0,
\qquad&&\text{if}\quad |i-j|=1.
\end{align}

\

Potential tableaux  in the equality \eqref{relation eee} are of the form
$[l+ \delta^{(k_1)}_{i,u_1}+\delta^{(k_2)}_{i,u_2}+\delta^{(k_3)}_{i,u_3}]$ (we want to show that the coefficient of such tableaux is zero).
Assume $[l+ \delta^{(k_1)}_{i,u_1}+\delta^{(k_2)}_{i,u_2}+\delta^{(k_3)}_{i,u_3}]\in  \mathcal{B}_{\mathcal{C}}([l])$.
Suppose first that there is no relation between  $(k_1,i,u_1)$, $(k_2,i,u_2)$ and $(k_3,j,u_3)$.
Let $[v]$ be a tableau with $\mathbb Z$-independent  entries.
Then $$[e_{i}^{(r)},[e_{i}^{(s)},e_{j}^{(t)}]][v]
+[e_{i}^{(s)},[e_{i}^{(r)},e_{j}^{(t)}]][v]=0.$$
The coefficient of $[l+ \delta^{(k_1)}_{i,u_1}+\delta^{(k_2)}_{i,u_2}+\delta^{(k_3)}_{j,u_3}]$ is obtained by substituting
$l$ for $v$ in the coefficient of $[v+ \delta^{(k_1)}_{i,u_1}+\delta^{(k_2)}_{i,u_2}+\delta^{(k_3)}_{j,u_3}]$, which is zero.

Suppose $j=i+1$.
If $(k_3,j,u_3)\geq (k_1,i,u_1)$ and there is no relation between $(k_3,j,u_3)$ and $ (k_2,i,u_2)$, then set
 $\mathcal{C}'=\{ (k_3,j,u_3)\geq (k_1,i,u_1)\}$. Consider a tableau
$[v]$  such that $v_{i_m,u_m}^{(k_m)}=l_{i_m,u_m}^{(k_m)}$ for
$m=1,2,3$, $i_1=i_2=i,i_3=i+1$ and all other entries are $\mathbb Z$-independent.
Then $[l+z]\in  \mathcal{B}_{\mathcal{C}}([l])$ if and only if $[v+z]\in  \mathcal{B}_{\mathcal{C}'}([v])$,
where $z$ is $\delta_{i_m,u_m}^{(k_m)}$ or $\delta_{i_{m_1},u_{m_1}}^{(k_{m_1})}+\delta_{i_{m_2},u_{m_2}}^{(k_{m_2})}$, $m,m_1,m_2=1,2,3$.

Since $[e_{i}^{(r)},[e_{i}^{(s)},e_{j}^{(t)}]][v]
+[e_{i}^{(s)},[e_{i}^{(r)},e_{j}^{(t)}]][v]=0$, we have that the coefficient  of
$[l+ \delta^{(k_1)}_{i,u_1}+\delta^{(k_2)}_{i,u_2}+\delta^{(k_3)}_{j,u_3}]$  is zero.

If there exists a relation between $(k_3,j,u_3)$ and $ (k_1,i,u_1)$, then by the same argument one can show that the coefficient  of
$[l+ \delta^{(k_1)}_{i,u_1}+\delta^{(k_2)}_{i,u_2}+\delta^{(k_3)}_{j,u_3}]$  is zero.

If $(k_1,i,u_1)>(k_3,j,u_3)$ and $(k_3,j,u_3)\geq (k_2,i,u_2)$, then there exits
$(k_4,i-1,u_4)$ such that $(k_1,i,u_1)\geq(k_4,i-1,u_4)$ and $(k_4,i-1,u_4)\geq (k_2,i,u_2)$.
Let $\mathcal{C}'=\{(k_1,i,u_1)>(k_3,j,u_3),(k_3,j,u_3)\geq(k_2,i,u_2),
(k_1,i,u_1)\geq(k_4,i-1,u_4),(k_4,i-1,u_4)> (k_2,i,u_2)\}$ and
$[v]$  a tableau such that $v_{i_m,u_m}^{(k_m)}=l_{i_m,u_m}^{(k_m)}$ for
$m=1,2,3,4$, $i_1=i_2=i,i_3=i+1,i_4=i-1$ and all other entries are $\mathbb Z$-independent.
Then $[l+z]\in  \mathcal{B}_{\mathcal{C}}([l])$ if and only if $[v+z]\in  \mathcal{B}_{\mathcal{C}'}([v])$,
where $z$ is $\delta_{i_m,u_m}^{(k_m)}$ or $\delta_{i_{m_1},u_{m_1}}^{(k_{m_1})}+\delta_{i_{m_2},u_{m_2}}^{(k_{m_2})}$,$m,m_1,m_2=1,2,3$.

Since $[e_{i}^{(r)},[e_{i}^{(s)},e_{j}^{(t)}]][v]
+[e_{i}^{(s)},[e_{i}^{(r)},e_{j}^{(t)}]][v]=0$, we have that the coefficient  of
$[l+ \delta^{(k_1)}_{i,u_1}+\delta^{(k_2)}_{i,u_2}+\delta^{(k_3)}_{j,u_3}]$  is zero.

The case $j=i-1$ is treated similarly.
This completes the proof of \eqref{relation eee}.
The second equality can be proved in the same way. We complete the proof of the
 sufficiency of conditions in  Theorem \ref{sufficiency of admissible}.

Suppose there exists an adjoining triple $(k,i,j),(r,i,t)$ which does not satisfy the condition \eqref{condition for admissible}. Then applying the RR-method to $\mathcal{C}$, after finitely many steps,  we will obtain a set of relations from Example
\ref{example non admissilbe} which is not admissible. Thus $\mathcal{C}$ is not admissible.

\section*{Acknowledgment}
V.F. is
supported in part by the CNPq grant (200783/2018-1) and by the
Fapesp grant (2014/09310-5).
J. Z. is supported by the Fapesp grant (2015/05927-0).


\begin{thebibliography}{20}

\bibitem[A1]{A1} {Arakawa T.},
\textit{ Representation theory of $W$-algebras},
Invent.Math. 169 (2007), 219-320.

\bibitem[A2]{A2} {Arakawa T.},
\textit{ Introduction to $W$-algebras and their
 representation theory}, 	{\it In: Callegaro F., Carnovale G., Caselli F., De Concini C., De Sole A. (eds) Perspectives in Lie Theory. Springer INdAM Series, vol 19. Springer, Cham.}


\bibitem[BK1]{BK1}
{Brundan J. and Kleshchev A.},
{\it Shifted Yangians and finite $W$-algebras},
{Adv. Math.} {\bf 200}  (2006),  136--195.

\bibitem[BK2]{BK2}
{Brundan J. and Kleshchev A.},
{\it Representations of shifted Yangians and finite $W$-algebras},
{Memoirs AMS}, 2008, 107 pp.


\bibitem[CL]{CL}
{Carter  R. W. and Lusztig G. },
{\it On the modular representations of the general linear and
symmetric groups},
{Math. Z.} 136 (1974), 193¨C242.

\bibitem[EK]{EK}{Elashvili A.G., Kac V.G.},
\textit{ Classification of good gradings of simple Lie algebras},
Amer. Math. Soc. Transl., Ser.2, 213 (2005), 85-104.

\bibitem[Fa]{fa} Faith C.,
\textit{Galois subrings of Ore domains are Ore domains}, Bull. AMS,
\textbf{78} (1972), no.6 1077-1080.

\bibitem[FF]{FF}
{Feigin B., Frenkel E.}, {\it Quantization of Drinfeld-Sokolov
reduction}, Phys. Lett. B, 246 (1990), 75-81.

\bibitem[FGRZ]{FGRZ} Futorny V., Grantcharov D., Ramirez L. E. and Zadunaisky P., {\it Gelfand-Tsetlin theory for rational Galois algebras}, preprint, available online at
https://arxiv.org/abs/1801.09316


\bibitem[FMO1]{FMO1}
{Futorny V., Molev A. and Ovsienko S.},
{\it Harish-Chandra modules for  Yangians},
Represent. Theory {\bf 9} (2005), 426--454.

\bibitem[FMO2]{FMO2}
{Futorny V., Molev A. and Ovsienko S.}, {\it Gelfand-Tsetlin bases
for representations of finite $W$-algebras and shifted Yangians},
In: Lie theory and its applications in Physics VII, 2008, Varna:
Heron Press, 2007, 352--363.

\bibitem[FMO3]{FMO3}
{Futorny V., Molev A. and Ovsienko S.}, {\it Gelfand-Kirillov conjecture and Gelfand-Tsetlin modules for finite $W$-algebras}, Adv. Math. 223 (2010),  773-796.


\bibitem[FO]{FO}{Futorny V., Ovsienko S.},
\textit{Galois orders in skew monoid rings}, J. Algebra 324 (2010), 598-630.



\bibitem[FRZ]{FRZ}
V. Futorny, L. E. Ramirez, J. Zhang, {\it Combinatorial construction of Gelfand-Tsetlin modules for $\gl_n$}, preprint, available online at https://arXiv.org/abs/1611.07908.

\bibitem[GT]{GT} I. Gelfand, M. Tsetlin, Finite-dimensional representations of the group of unimodular matrices, {Doklady Akad. Nauk SSSR (N.s.)}, {71} (1950), 825--828.


\bibitem[GG]{GG}{Gan W.L., Ginzburg V.},
\textit{ Quantization of Slodowy slices},
Int. Math. Res. Notices 5 (2002), 243-255.

\bibitem[GeG]{GeG}  I. Gelfand, M. Graev, {\it Finite-dimensional irreducible representations of the unitary and complete linear group and special functions associated with them}, Izvestiya Rossiiskoi Akademii Nauk. Seriya Matematicheskaya 29.6 (1965), 1329-1356.

\bibitem[Har]{Har} Hartwig J., {\it Principal Galois Orders and Gelfand-Zeitlin modules}, preprint, available online at http://arxiv.org/abs/1710.04186v1.

\bibitem[KW]{KW}
{Kac V.,  Wakimoto M.}, \textit{ Quantum reduction and
representation theory of superconformal algebras}, Adv. Math., 185
(2004), 400-458, Corrigendum, Adv. Math., 193 (2005), 453-455.

\bibitem[KRW]{KRW}
{Kac V., Roan S.S., Wakimoto M.}, \textit{ Quantum reduction for
affine superalgebras}, Comm. Math. Phys., 241 (2003), 307-342.

\bibitem[Ko]{Ko}{Kostant B.}
\textit{ On Whittaker vectors and representation theory},
Invent. Math. 48 (1978), 101-184.

\bibitem[KoW]{KoW}  Kostant B.,  Wallach N.:
\textit{Gelfand-Zeitlin theory from the perspective of classical
mechanics I}. In Studies in Lie Theory Dedicated to A. Joseph on
his Sixtieth Birthday, Progress in Mathematics, Vol.  {243},
(2006),   319-364.

\bibitem[LP]{LP}  Lemire F., Patera J., \textit{Formal analytic continuation of Gelfand's finite dimensional representations of $\mathfrak{gl}(n, \mathbb{C})$},  Journal of Mathematical Physics 20.5 (1979), 820-829.



\bibitem[L]{L}{Lynch T.E.},
\textit{ Generalized Whittaker vectors and representation theory},
PhD thesis, MIT, 1979.


\bibitem[M]{m:yc}
{Molev A.,}
{\it Yangians and classical Lie algebras},
Mathematical Surveys and Monographs, 143.
American Mathematical Society, Providence, RI, 2007.

\bibitem[M1]{M1}
{Molev A.,} {\it Irreducibility criterion for tensor products of Yangian evaluation
modules}, Duke Math. J. 112 (2002), 307--341.


\bibitem[NT]{NT} Nazarov M., Tarasov V., {\it Representations of Yangians with Gelfand¨CZetlin
bases},
J. Reine Angew. Math. 496 (1998), 181¨C212

\bibitem[NT1]{NT1} Nazarov M., Tarasov V., {\it On irreducibility of tensor products of Yangian modules associated with skew Young diagrams},
Duke Math. J.
 112 (2002), 343-378.


\bibitem[O]{O}
{Ovsienko S.},
{\it Finiteness statements for Gelfand--Tsetlin modules},
in: ``Algebraic Structures and Their Applications", Math. Inst.,
Kiev, 2002.

\bibitem[P]{P}{Premet A.},
\textit{ Special transverse slices and their enveloping algebras},
Advances Math. 170 (2002), 1-55.

\bibitem[RS]{RS}{Ragoucy E., Sorba P.},
\textit{ Yangian realizations from finite $W$-algebras},
Comm. Math. Phys., 203 (1999), 551-576.

\bibitem[SK]{SK} {De Sole A., Kac A.},
\textit{ Finite vs affine $W$-algebras}, Japanese J. Math., 1
(2006), 137-261.
\end{thebibliography}
\end{document}